\theoremstyle{plain}
\newtheorem{theorem}{Theorem}[section]
\newtheorem{corollary}{Corollary}[section]
\newtheorem{lemma}{Lemma}[section]
\theoremstyle{definition}
\newtheorem{remark}{Comment}[section]
\newtheorem*{algorithm}{Algorithm}
\newcommand{\Ep}{{\mathrm{E}}}
\newcommand{\En}{{\mathbb{E}_n}}
\newcommand{\R}{{\mathbb{R}}}
\newcommand{\sign}{{\rm sign}}
\newcommand{\Var}{{\rm Var}}
\renewcommand{\Pr}{{\mathrm{P}}}
\renewcommand{\hat}{\widehat}
\renewcommand{\tilde}{\widetilde}
\begin{document}

\title[Many Moment Inequalities]{Inference on Causal and Structural Parameters using Many Moment Inequalities}

\thanks{We are grateful to Jin Hahn, Adam Rosen, Azeem Shaikh, participants at Cowles Summer Conference 2013, Asian Meeting of Econometric Society 2013, Bernoulli Society Satellite Conference, and seminar participants at UCL and USC. V. Chernozhukov and D. Chetverikov  are supported by a National Science Foundation grant. This paper was previously circulated under the title ``Testing many moment inequalities.''}

\author[Chernozhukov]{Victor Chernozhukov}
\author[Chetverikov]{Denis Chetverikov}
\author[Kato]{Kengo Kato}

\address[V. Chernozhukov]{
Department of Economics and Operations Research Center, MIT, 50 Memorial Drive, Cambridge, MA 02142, USA.
}
\email{vchern@mit.edu}

\address[D. Chetverikov]{
Department of Economics, UCLA, 315 Portola Plaza, Bunche Hall Room 8283, Los Angeles, CA 90024, USA.
}
\email{chetverikov@econ.ucla.edu}

\address[K. Kato]{
Department of Statistical Science, Cornell University, 1194 Comstock Hall, Ithaca, NY 14853, USA.
}
\email{kk976@cornell.edu}

\date{First public version: December 2013 (arXiv:1312.7614v1). This version: \today.}

\begin{abstract}
This paper considers the problem of testing {\em many} moment inequalities where the number of moment inequalities, denoted by $p$,  is possibly much larger than the sample size $n$.
There is a variety of economic applications where solving this problem allows to carry out inference on causal and structural parameters; a notable example is the market structure model of \cite{CilibertoTamer2009} where $p=2^{m+1}$ with $m$ being the number of firms that could possibly enter the market. We consider the test statistic given by the maximum of $p$ Studentized (or $t$-type) inequality-specific statistics, and analyze various ways to compute critical values for the test statistic. Specifically, we consider critical values based upon (i) the union bound combined with a moderate deviation inequality for self-normalized sums, (ii) the multiplier and empirical bootstraps, and (iii) two-step and three-step variants of (i) and (ii) by incorporating the selection of uninformative inequalities that are far from being binding and a novel selection of weakly informative inequalities that are potentially binding but do not provide first order information. We prove validity of these methods, showing that under mild conditions, they lead to tests with the error in size decreasing polynomially in $n$ while allowing for $p$ being much larger than $n$; indeed $p$ can be of order $\exp (n^{c})$ for some $c > 0$. Importantly, all these results hold without any restriction on the correlation structure between $p$ Studentized statistics, and also hold uniformly with respect to suitably large classes of underlying distributions. Moreover, in the online supplement, we show validity of a test based on the block multiplier bootstrap in the case of dependent data under some general mixing conditions.
\end{abstract}
\keywords{Many moment inequalities, moderate deviation, multiplier and empirical bootstrap, non-asymptotic bound, self-normalized sum}
\maketitle

\section{Introduction}
\label{sec: introduction}
In recent years, the moment inequalities framework has developed into a powerful tool for inference on causal and structural parameters in partially identified models. Many papers studied models with a finite and fixed (and so asymptotically small) number of both conditional and unconditional moment inequalities; see the list of references below. In practice, however, the number of moment inequalities implied by the model is often large. For example, one of the main classes of partially identified models arise from problems of estimating games with multiple equilibria, and even relatively simple static games typically produce a large set of moment inequalities; see, for example, Theorem 1 in \cite{GH11}. More complicated dynamic models, including dynamic games of imperfect information, produce even larger sets of moment inequalities. Researchers therefore had to rely on ad hoc, case-specific, arguments to select a small subset of moment inequalities to which the methods available in the literature so far could be applied. In this paper, we develop systematic methods to treat {\em many} moment inequalities. Our methods are universally applicable in any setting leading to many moment inequalities.\footnote{In some special settings, such as those studied in Theorem 4 of \cite{GH11}, the number of moment inequalities can be dramatically reduced without blowing up the identified set (and so without any subjective choice). However, there are no theoretically justified procedures that would generically allow to decrease the number of moment inequalities in all settings.

In addition, it is important to note that in practice, it may be preferable to use more inequalities than those needed for sharp identification of the model. Indeed, selecting inequalities for statistical inference and selecting a minimal set of inequalities that suffice for sharp identification are rather different problems since the latter problem relies upon the knowledge of the inequalities and does not take into account the noise associated with estimation of inequalities. For example, if a redundant inequality can be estimated with high precision, it may be beneficial to use it for inference in addition to inequalities needed for sharp identification since such an inequality may improve finite sample statistical properties of the inferential procedure.}

There is a variety of economic applications where the problem of testing many moment inequalities appears. One example is the discrete choice model where a consumer is selecting a bundle of products for purchase and moment inequalities come from a revealed preference argument \citep[see][]{Pakes10}.
In this example, one typically has many moment inequalities because the number of different combinations of products from which the consumer is selecting is huge. Another example is the market structure model of \cite{CilibertoTamer2009} where the number of moment inequalities equals the number of possible combinations of firms presented in the market, which is exponentially large in the number of firms that could potentially enter the market. Yet another  example is a dynamic model of imperfect competition of \cite{BBL07}, where deviations from the optimal policy serve to define many moment inequalities.  Other prominent examples leading to many moment inequalities are studied in \cite{BMM11}, \cite{GH11}, \cite{CRS13}, and \cite{CR13} where moment inequalities are used to provide sharp identification regions for parameters in partially identified models. In all these applications, testing moment inequalities allows to carry out inference on structural and causal parameters. In addition, we note that, as explained in \cite{SP18}, our results help to test conditional independence, a concept that plays a particularly important role in causal machine learning; see \cite{P09}.

Many examples above have a very important feature -- the large number of inequalities generated are ``unstructured" in the sense that they can not be viewed as some unconditional moment inequalities generated from a small number of conditional inequalities with a low-dimensional conditioning variable. This means that the existing inference methods for conditional moment inequalities, albeit fruitful in many cases, do not apply to this type of framework, and our methods are precisely aimed at dealing with this important case. We thus view our methods  as strongly complementary to the existing literature.\footnote{A small number of conditional inequalities gives rise to a large number of unconditional inequalities, but these have a certain continuity and tightness structure, which the literature on conditional moment inequalities heavily exploits/relies upon. Our approach works even if such structure is not available and can handle many unstructured moment inequalities. In addition, when such structure is available, our bootstrap methods automatically exploit it leading to powerful tests of structured moment inequalities arising from conversion of a small or large number of conditional moment inequalities.}

There are also many empirical studies where many moment inequalities framework could be useful. Among others, these are \cite{CilibertoTamer2009} who estimated the empirical importance of firm heterogeneity as a determinant of the market structure in the US airline industry,\footnote{\cite{CilibertoTamer2009} had 2742 markets and used four major airline companies and two aggregates of medium size and low cost companies that lead to $2^{4+2+1}=128$ moment inequalities, which is already a large number. However, as established in Theorem 1 of \cite{GH11}, sharp identification bounds in the Ciliberto and Tamer model would require around $2^{2^{4+2}}=2^{64}$  inequalities.} \cite{Holmes11} who estimated the dynamic model of the Wal-Mart expansion,\footnote{\cite{Holmes11} derived moment inequalities from ruling out deviations from the observed Wal-Mart behavior as being suboptimal. He considered the set of potential deviations where the opening dates of some Wal-Mart stores are reordered, and explicitly acknowledged that this leads to the enormous number of inequalities (in fact, this is a number of permutations of 3176 Wal-Mart stores, up to a restriction that the stores opened in the same year can not be permuted). Therefore, he restricted attention to deviations consisting of pairwise resequencing where each deviation switches the opening dates of only two stores. However, one could argue that deviations in the form of block resequencing where the opening dates of blocks of stores are switched are also informative since one of the main features of the Wal-Mart strategy is to pack stores closely together, so that it is easy to set up a distribution network and save on trucking costs.} and  \cite{Ryan12} who estimated the welfare costs of the 1990 Amendments to the Clean Air Act on the U.S. Portland cement industry.\footnote{\cite{Ryan12} adapted an estimation strategy proposed in \cite{BBL07}. He had 517 market-year observations and considered 1250 alternative policies to generate a set of inequalities.}

To formally describe the problem, let $X_{1},\dots,X_{n}$ be a sequence of independent and identically distributed (i.i.d.) random vectors in $\R^{p}$,
where $X_{i}=(X_{i1},\dots,X_{i p})^T$, with a common distribution denoted by $\mathcal L_X$. For  $1 \leq j \leq p$, write  $\mu_{j} := \Ep[ X_{1j} ]$. We are interested in testing the null hypothesis
\begin{align}
&H_{0}: \mu_{j} \leq 0 \quad \text{for all} \ j=1,\dots,p, \label{eq: null hypothesis}
\intertext{against the alternative}
&H_{1}: \mu_{j} > 0 \quad \text{for some} \ j=1,\dots,p. \label{eq: alternative hypothesis}
\end{align}
We refer to (\ref{eq: null hypothesis}) as the moment inequalities, and we say that the $j$th moment inequality is satisfied (violated) if $\mu_j\leq 0$ ($\mu_{j}>0$). Thus $H_0$ is the hypothesis that all the moment inequalities are satisfied.
The primal feature of this paper is that the number of moment inequalities $p$ is allowed to be larger or even much larger than the sample size $n$. 

We consider the test statistic given by the maximum over $p$  Studentized (or $t$-type) inequality-specific statistics (see (\ref{eq: test statistic}) ahead for the formal definition), and propose a number of methods for computing critical values. Specifically, we consider critical values based upon (i) the union bound combined with a moderate deviation inequality for self-normalized sums, and (ii) bootstrap methods.
We will call the first option the {\em SN method} (SN refers to the abbreviation of ``Self-Normalized''). Among bootstrap methods, we consider multiplier and empirical bootstrap procedures abbreviated as {\em MB and EB methods}. The SN method is analytical and is very easy to implement. As such, the SN method is particularly useful for grid search when the researcher is interested in constructing the confidence region for the identified set in the parametric model defined via moment inequalities as in Appendix \ref{sec: confidence region} of the online supplement.
Bootstrap methods are simulation-based and computationally harder. However, an important feature of bootstrap methods is that they take into account the correlation structure of the data and yield lower critical values leading to more powerful tests than those obtained via the SN method. In particular, if the researcher incidentally repeated the same inequality twice or, more importantly, included inequalities with very similar informational content (that is, highly correlated inequalities), the MB/EB methods would be able to account of this and would automatically disregard or nearly disregard these duplicated or nearly duplicated inequalities, without inflating the critical value.


We also consider two-step methods by incorporating inequality selection procedures. The two-step methods get rid of most of {\em uninformative} inequalities, that is inequalities $j$ with $\mu_j<0$ if $\mu_j$ is not too close to $0$. By dropping the uninformative inequalities, the two-step methods produce more powerful tests than those based on the one-step methods, that is, methods without the inequality selection procedures. 

Moreover, we develop novel three-step methods by incorporating double inequality selection procedures.
The three-step methods are suitable in parametric models defined via moment inequalities and allow to drop {\em weakly informative} inequalities in addition to uninformative inequalities.\footnote{The same methods can be extended to nonparametric models as well. In this case, $\theta$ appearing below in this paragraph should be considered as a sieve parameter.} Specifically, consider the model consisting of inequalities $\Ep[g_j(\xi,\theta)]\leq 0$ for all $j=1,\dots,p$ where $\xi$ is a vector of observable random variables, $\theta$ a vector of structural or causal parameters, and $g_1,\dots,g_p$ a set of known functions. Suppose that the researcher is interested in testing the null hypothesis $\theta=\theta_0$ against the alternative $\theta\neq \theta_0$ based on the i.i.d. data $\xi_1,\dots,\xi_n$, so that the problem reduces to (\ref{eq: null hypothesis})-(\ref{eq: alternative hypothesis}) by setting $X_{i j}=g_j(\xi_i,\theta_0)$. We say that the inequality $j$ is weakly informative if the function $\theta\mapsto \Ep[g_j(\xi,\theta)]$ is flat or nearly flat at $\theta=\theta_0$. Dropping weakly informative inequalities allows us to derive tests with higher local power since these inequalities can only provide a weak signal of the violation of the null hypothesis when $\theta$ is close to $\theta_0$.


We prove validity of these methods for computing the critical values, uniformly in suitable classes of distributions $\mathcal L_X$. We derive non-asymptotic bounds on the rejection probabilities, where ``non-asymptotic'' means that the bounds hold with fixed $n$ (and $p$, and all the other parameters), and the dependence of the constants involved in the bounds are stated explicitly.
Notably, under mild conditions, these methods lead to the error in size decreasing polynomially in $n$, while allowing for $p$ much larger than $n$; indeed, $p$ can be of order $\exp (n^{c})$ for some $c > 0$. In addition, we emphasize that although we are primarily interested in the case with $p$ (much) larger than $n$, our methods remain valid when $p$ is small or comparable to $n$.\footnote{When $p$ is small relative to $n$, other tests, e.g. the quasi likelihood-ratio test may be more powerful than the methods developed here; see Section \ref{sec: test statistic} for further discussion.}

An important feature of our methods is that increasing the set of moment inequalities has no or little effect on the critical value. In particular, as a function of the number of moment inequalities $p$, our critical values are always bounded from above by a slowly varying $(\log p)^{1/2}$ (up to a multiplicative constant). This implies that instead of making a subjective choice of inequalities, the researcher should use all (or at least a large set of) available inequalities since using more inequalities gives much larger values of the test statistic when added inequalities violate $H_0$. This feature of our methods is akin to that in modern high-dimensional/big-data techniques like the Lasso and the Dantzig selector that allow for the variable selection in exchange for small cost in the precision of model estimates; see, for example, \cite{BRT09} for an analysis and discussion of the methods of estimating high-dimensional models.

Our results can also be used for the construction of confidence regions for identifiable parameters in partially identified models defined by moment inequalities. In particular, we show in Appendix \ref{sec: confidence region} of the online supplement how to use our results for constructing confidence regions that are {\em asymptotically honest}, with the coverage being correct uniformly in suitably large classes of underlying distributions.

Moreover, we consider two extensions of our results in Appendix \ref{sec: some extensions} of the online supplement. In the first extension, we consider testing many moment inequalities for dependent data. In the second extension, we allow for {\em approximate} inequalities to account of the case where an approximation error arises either from estimated nuisance parameters or from the need to linearize the inequalities. Both of these extensions are important for inference in dynamic models such as those considered in \cite{BBL07}.

The literature on testing (unconditional) moment inequalities is large; see \cite{White00},
\cite{ChernozhukovHongTamer2007}, \cite{RomanoShaikh2008}, \cite{Rosen08}, \cite{AndrewsGuggenberger2009}, \cite{AndrewsSoares2010}, \cite{Canay2010}, \cite{Bugni2011}, \cite{AndrewsBarwick2012}, and \cite{RomanoShaikhWolf2012}. However, these papers deal only with a finite (and fixed) number of moment inequalities. There are also several papers on testing  conditional moment inequalities, which can be treated as an infinite number of unconditional moment inequalities; see \cite{AndrewsShi2013}, \cite{ChernozhukovLeeRosen2013}, \cite{LeeSongWhang13,LeeSongWhang13b}, \cite{Armstrong2011}, \cite{Chetverikov2011}, and \cite{ArmstrongChan2012}. However, when unconditional moment inequalities come from conditional ones, they inherit from original inequalities certain correlation structure that facilitates the analysis of such moment inequalities. In contrast, we are interested in treating many moment inequalities without assuming any correlation structure, motivated by important examples such as those in Cilberto and Tamer (2009), \cite{BBL07}, and Pakes (2010).  \cite{Menzel2008} considered inference for many moment inequalities, but with $p$ growing at most as $n^{2/7}$ (and hence $p$ being much smaller than $n$).
Also his approach and test statistics are different from ours. Finally, \cite{Allen14} recently suggested further extensions and refinements of our new methods. In particular, he noticed that the truncation threshold for our selection procedures can be taken slightly lower (in absolute value) than what we use; he studied an iterative procedure based on \cite{Chetverikov2011}; and he considered moment re-centering procedure similar to that developed in \cite{RomanoShaikhWolf2012}. The latter two possibilities were already noted in the previous versions of our paper.\footnote{See the 2013 version of our paper at arXiv:1312.7614v1.}



The remainder of the paper is organized as follows.
In the next section, we discuss several motivating examples.
In Section \ref{sec: test statistic}, we build our test statistic.
In Section \ref{sec: critical value}, we derive various ways of computing critical values for the test statistic, including the SN, MB, and EB methods and their two-step and three-step variants discussed above, and state results on their validity. In Section \ref{sec: power}, we discuss power properties of our methods. In Section \ref{sec: monte carlo}, we describe Monte Carlo simulations shedding light on how our methods perform in finite samples. Additional results, as well as all the proofs and the results of Monte Carlo simulations, are provided in the online supplement.

\subsection{Notation and convention}
For an arbitrary sequence $\{ z_{i} \}_{i=1}^{n}$, we write $\En [ z_{i} ] = n^{-1} \sum_{i=1}^{n} z_{i}$.
For $a,b \in \R$, we use the notation $a \vee b = \max \{ a,b \}$. For any finite set $J$, we let $| J |$ denote the number of elements in $J$.
The transpose of a vector $z$ is denoted by $z^{T}$.
Moreover, we use the notation $X_{1}^{n} = \{ X_{1},\dots, X_{n} \}$.
In this paper, we (implicitly) assume that the quantities such as $X_{1},\dots,X_{n}$ and $p$ are all indexed by $n$. We are primarily interested in the case where $p=p_{n} \to \infty$ as $n \to \infty$.
However, in most cases, we suppress the dependence of these quantities on $n$ for the notational convenience, and our results also apply to the case with fixed $p$.
Finally, throughout the paper, we assume that $n \geq 2$ and  $p\geq 2$.

\section{Motivating examples}\label{sec: motivating examples}
In this section, we provide three examples that motivate the framework where the number of moment inequalities $p$ is large and potentially much larger than the sample size $n$. In these examples, one actually has many conditional rather than unconditional moment inequalities. Therefore, we emphasize that our results cover the case of many conditional moment inequalities as well.\footnote{Indeed, consider conditional moment inequalities of the form
\begin{equation}\label{eq: conditional moment inequalities}
\Ep[g_j(Y) \mid Z]\leq 0 \quad \text{for all} \ j=1,\dots,p'
\end{equation}
where $(Y,Z)$ is a pair of random vectors and $g_1,\dots,g_{p'}$ is a set of functions with $p'$ being large. Let $\mathcal{Z}$ be the support of $Z$ and assume that $\mathcal{Z}$ is a compact set in $\R^{l}$. Then, following 
\cite{AndrewsShi2013}, one can construct an infinite set $\mathcal{I}$ of instrumental functions $I:\mathcal{Z}\to \R$ such that $I(z)\geq 0$ for all $z\in\mathcal{Z}$ and (\ref{eq: conditional moment inequalities}) holds if and only if
\[
\Ep[g_j(Y) I(Z) ]\leq 0 \quad \text{for all} \ j=1,\dots,p' \ \text{and all} \ I\in\mathcal{I}.
\]
In practice, one can choose a large subset $\mathcal{I}_n$ of $\mathcal{I}$ and consider testing $p=p'|\mathcal{I}_n|$ moment inequalities
\begin{equation}\label{eq: reduction set}
\Ep[g_j(Y) I(Z)]\leq 0 \quad \text{for all} \ j=1,\dots,p' \ \text{and all} \ I\in\mathcal{I}_n.
\end{equation}
If $\mathcal{I}_n$ grows sufficiently fast with $n$, the test of (\ref{eq: conditional moment inequalities}) based on (\ref{eq: reduction set}) will be consistent.} As these examples demonstrate, there is a variety of economic models leading to the problem of testing many unconditional and/or many conditional moment inequalities to which the methods available in the literature so far can not be applied,  and which, therefore, requires the methods developed in this paper. 

\subsection{Market structure model}
This example is based on \cite{CilibertoTamer2009}.\footnote{The market structure model is also often referred to as an entry game.}
Let $m$ denote the number of firms that could potentially enter the market. Let $m$-tuple $D=(D_1,\dots,D_m)$ denote entry decisions of these firms; that is, $D_j=1$ if the firm $j$ enters the market and $D_j=0$ otherwise. Let $\mathcal{D}$ denote the set of possible values of $D$. Clearly, the number of elements $d$ of the set $\mathcal{D}$ is $|\mathcal{D}|=2^m$.

Let $X$ and $\varepsilon$ denote the (exogenous) characteristics of the market as well as the characteristics of the firms that are observed and not observed by the researcher, respectively.
The profit of the firm $j$ is given by
\[
\pi_j(D,X,\varepsilon,\theta),
\]
where the function $\pi_j$ is known up to a parameter $\theta$. Assume that both $X$ and $\varepsilon$ are observed by the firms and that a Nash equilibrium is played, so that for each $j$,
\[
\pi_j((D_j,D_{-j}),X,\varepsilon,\theta)\geq \pi_j((1-D_j,D_{-j}),X,\varepsilon,\theta),
\]
where $D_{-j}$ denotes the decisions of all firms excluding the firm $j$. Then one can find set-valued functions $R_1(d,X,\theta)$ and $R_2(d,X,\theta)$ such that $d$ is the {\em unique} equilibrium whenever $\varepsilon\in R_1(d,X,\theta)$, and $d$ is {\em an} equilibrium whenever $\varepsilon\in R_2(d,X,\theta)$. When $\varepsilon\in R_1(d,X,\theta)$ for some $d\in \mathcal{D}$, we know for sure that $D=d$ but when $\varepsilon\in R_2(d,X,\theta)$, the probability that $D=d$ depends on the equilibrium selection mechanism, and, without further information, can be anything in $[0,1]$.  Therefore, we have the following bounds
\begin{multline*}
\Ep\left[1\{\varepsilon\in R_1(d,X,\theta) \mid X\right] \leq \Ep\left[1\{D=d\} \mid X\right] \\
\leq \Ep\left[1\{\varepsilon\in R_1(d,X,\theta)\cup R_2(d,X,\theta)\} \mid X\right],
\end{multline*}
for all $d\in \mathcal{D}$. Further, assuming that the conditional distribution of $\varepsilon$ given $X$ is known (alternatively, it can be assumed that this distribution is known up to a parameter that is a part of the parameter $\theta$), both the left- and the right-hand sides of these inequalities can be calculated. Denote them by $P_1(d,X,\theta)$ and $P_2(d,X,\theta)$, respectively, to obtain
\begin{equation}\label{eq: inequalities CT}
P_1(d,X,\theta)\leq \Ep\left[1\{D=d\} \mid X\right]\leq P_2(d,X,\theta)\text{ for all }d\in \mathcal{D}.
\end{equation}
These inequalities can be used for inference about the parameter $\theta$. Note that the number of inequalities in (\ref{eq: inequalities CT}) is $2|\mathcal{D}|=2^{m+1}$, which is a large number even if $m$ is only moderately large. Moreover, these inequalities are conditional on $X$. For inference about the parameter $\theta$, each of these inequalities can be transformed into a large and increasing number of unconditional inequalities as described above. Also, if the firms have more than two decisions, the number of inequalities will be even (much) larger. Finally, one can produce even larger set of inequalities in this example using the bounds of \cite{GH11}; see Section \ref{sec: monte carlo market structure} for details. Therefore, our framework is exactly suitable for this example.

\subsection{Discrete choice model with endogeneity}
Our second example is based on \cite{CRS13}. The source of many moment inequalities in this example is different from that in the previous example. Consider an individual who is choosing an alternative $d$ from a set $\mathcal{D}$ of available options. Let $M=|\mathcal{D}|$ denote the number of available options. Let $D$ denote the choice of the individual. From choosing an alternative $d$, the individual obtains the utility
\[
u(d,X,V),
\]
where $X$ is a vector of observable (by the researcher) covariates and $V$ is a vector of unobservable (by the researcher) utility shifters. The individual observes both $X$ and $V$ and makes a choice based on utility maximization, so that $D$ satisfies
\[
u(D,X,V)\geq u(d,X,V)\text{ for all }d\in \mathcal{D}.
\]
The object of interest in this model is the pair $(u,P_V)$ where $P_V$ denotes the distribution of the vector $V$.

In many applications, some components of $X$ may be endogenous in the sense that they are not independent of $V$. Therefore, to achieve (partial) identification of the pair $(u,P_V)$, following \cite{CRS13}, assume that there exists a vector $Z$ of observable instruments that are independent of $V$. 
Let $\mathcal{V}$ denote the support of $V$, and let $\mathcal{\tau}(d,X,u)$ denote the subset of $\mathcal{V}$ such that $D=d$ whenever $X=x$ and $V\in\mathcal{\tau}(d,x,u)$, so that
\begin{equation}\label{eq: tau inclusion}
V\in\tau(D,X,u).
\end{equation}
Then for any set $S\subset\mathcal{V}$,
\begin{equation}\label{eq: CRS identification}
\Ep\left[1\{V\in S\}\right]=\Ep\left[1\{V\in S\} \mid Z\right]\geq \Ep\left[1\{\tau(D,X,u)\subset S\} \mid Z\right],
\end{equation}
where the equality follows from independence of $V$ from $Z$, and the inequality from (\ref{eq: tau inclusion}). Note that the left-hand side of (\ref{eq: CRS identification}) can be calculated (for fixed distribution $P_V$) and equals $P_V(S)$, so that we obtain
\begin{equation}\label{eq: CRS inequalities}
P_V(S)\geq \Ep\left[1\{\tau(D,X,u)\subset S\} \mid Z\right]\text{ for all }S\in\mathcal{S},
\end{equation}
where $\mathcal{S}$ is some collection of sets in $\mathcal{V}$. Inequalities (\ref{eq: CRS inequalities}) can be used for inference about the pair $(u,P_V)$.
A natural question then is what collection of sets $\mathcal{S}$ should be used in (\ref{eq: CRS inequalities}). \cite{CRS13} showed that sharp identification of the pair $(u,P_V)$ is achieved by considering all unions of sets on the support of $\tau(D,X,u)$ with the property that the union of the interiors of these sets is a connected set. When $X$ is discrete with the support consisting of $m$ points, this implies that the class $\mathcal{S}$ may consist of $M\cdot2^m$ sets, which is a large number even for moderately large $m$. Moreover, as in our previous example, inequalities in (\ref{eq: CRS inequalities}) are conditional giving rise to even a larger set of inequalities when transformed into unconditional ones. Therefore, our framework is again exactly suitable for this example. 

Also, we note that the model described in this example fits as a special case into a Generalized Instrumental Variable framework set down and analyzed by \cite{CR13}, where the interested reader can find other examples leading to many moment inequalities.


\subsection{Dynamic model of imperfect competition}\label{sec: dynamic game}
This example is based on \cite{BBL07}. In this example, many moment inequalities arise from ruling out deviations from best responses in a dynamic game. Consider a market consisting of $N$ firms. Each firm $j$ makes a decision $A_{j t}\in\mathcal{A}$ at time periods $t=0,1,2,\dots,\infty$. Let $A_t=(A_{1 t},\dots,A_{N t})$ denote the $N$-tuple of decisions of all firms at period $t$. The profit of the firm $j$ at period $t$, denoted by $\pi_j(A_t,S_t,\nu_{j t})$, depends on the $N$-tuple of decisions $A_t$, the state of the market $S_t\in \mathcal{S}$ at period $t$, and the firm- and time-specific shock $\nu_{j t}\in\mathcal{V}$. Assume that the state of the market $S_t$ follows a Markov process, so that $S_{t+1}$ has the distribution function $P(S_{t+1}|A_t,S_t)$, and that $\nu_{j t}$'s are i.i.d. across firms $j$ and time periods $t$ with the distribution function $G(\nu_{j t})$. In addition, assume that when the firm $j$ is making a decision $A_{j t}$ at period $t$, it observes $S_t$ and $\nu_{j t}$ but does not observe $\nu_{-j t}$, the specific shocks of all its rivals, and that the objective function of the firm $j$ at period $t$ is to maximize
\[
\Ep\left[\sum_{\tau=t}^\infty \beta^{\tau-t}\pi_j(A_\tau,S_\tau,\nu_{j t}) \mid S_t \right],
\]
where $\beta$ is a discount factor. Further, assume that a Markov Perfect Equilibrium (MPE) is played in the market. Specifically, let $\sigma_j:\mathcal{S}\times\mathcal{V}\to\mathcal{A}$ denote the MPE strategy of firm $j$, and let $\sigma:=(\sigma_1,\dots,\sigma_N)$ denote the $N$-tuple of strategies of all firms. Define the value function of the firm $j$ in the state $s\in\mathcal{S}$ given the profile of strategies $\sigma$, $V_j(s,\sigma)$, by the Bellman equation:
\[
V_j(s,\sigma):=\Ep_\nu\Big[\pi_j(\sigma(s,\nu),s,\nu_j)+\beta\int V_j(s',\sigma)d P(s' \mid \sigma(s,\nu),s)\Big],
\]
where $\sigma(s,\nu)=(\sigma_1(s,\nu_1),\dots,\sigma_N(s,\nu_N))$, and expectation is taken with respect to $\nu=(\nu_1,\dots,\nu_N)$ consisting of $N$ i.i.d. random variables $\nu_j$ with the distribution function $G(\nu_j)$. Then the profile of strategies $\sigma$ is an MPE if for any $j=1,\dots, N$ and $\sigma_j':\mathcal{S}\times\mathcal{V}\to\mathcal{A}$, we have
\begin{align*}
V_j(s,\sigma)&\geq V_j(s,\sigma_j',\sigma_{-j})\\
&=\Ep_\nu\Big[\pi_j(\sigma_j'(s,\nu_i),\sigma_{-j}(s,\nu_{-j}),s,\nu_j)\\
&\qquad+\beta\int V_j(s',\sigma_j',\sigma_{-j})d P(s' \mid \sigma_j'(s,\nu_j),\sigma_{-j}(s,\nu_{-j}),s)\Big],
\end{align*}
where $\sigma_{-j}$ is strategies of all rivals of the firm $j$ in the profile $\sigma$. 

For estimation purposes, assume that the functions $\pi_j(A_t,S_t,\nu_{j t})$ and $G(\nu_{j t})$ are known up to a finite dimensional parameter $\theta$, that is we have $\pi_j(A_t,S_t,\nu_{j t})=\pi_j(A_t,S_t,\nu_{j t},\theta)$ and $G(\nu_{j t})=G(\nu_{j t},\theta)$, so that the value function $V_j(s,\sigma)=V_j(s,\sigma,\theta)$ also depends on $\theta$, and the goal is to estimate $\theta$. Assume that the data consist of observations on $n$ similar markets for a short span of periods or observations on one market for $n$ periods. In the former case, assume also that the same MPE is played in all markets.\footnote{In the case of data consisting of observations on one market for $n$ periods, one has to use techniques for dependent data developed in Appendix \ref{sec: dependent data} of the online supplement. It is also conceptually straightforward to extend our techniques to the case when the data consist of observations on many markets for many periods, as happens in some empirical studies. We leave this extension for future work.}

In this model, \cite{BBL07} suggested a computationally tractable two-stage procedure to estimate the structural parameter $\theta$. An important feature of their procedure is that it does not require point identification of the model. The first stage of their procedure consists of estimating transition probability function $P(S_{t+1}|S_t,A_t)$ and policy functions (strategies) $\sigma_j(s,\nu_j)$. Following their presentation, assume that these functions are known up to a finite dimensional parameter $\alpha = (\alpha_1,\alpha_2)$, that is $P(S_{t+1}|S_t,A_t)=P(S_{t+1}|S_t,A_t,\alpha_1)$ and $\sigma_j(s,\nu_j)=\sigma_j(s,\nu_j,\alpha_2)$, and that the first stage yields a $\sqrt{n}$-consistent estimator $\hat{\alpha}_n = (\hat\alpha_{n,1},\hat\alpha_{n,2})$ of $\alpha = (\alpha_1,\alpha_2)$.\footnote{Estimation of $\alpha_1$ is simple; for example, it can be estimated by the maximum likelihood method. Estimation of $\alpha_2$ is more complicated since the functions $\sigma_j(s,\nu_j)$ depend on unobservable $\nu_j$'s and requires additional assumptions. When the set $\mathcal A$ is finite, for example, one can assume that the shock $\nu_j$ is additively separable in the profit function, so that $\pi_j(A_t,S_t,\nu_{j t}) = \tilde\pi_j(A_t,S_t) + \nu_i(A_{j t})$, where the vector $\{\nu_i(A)\}_{A\in\mathcal A}$ consists of i.i.d. random variables, and use the methods of \cite{HM93} to estimate $\alpha_2$; see \cite{BBL07} for details.} Using $\hat{\alpha}_{n,1}$, one can estimate the transition probability function by $P(S_{t+1}|S_t,A_t,\hat{\alpha}_{n,1})$, and then one can calculate the (estimated) value function of the firm $j$ at every state $s\in\mathcal{S}$, $\hat{V}_j(s,\sigma',\theta)$, for any profile of strategies $\sigma'$ and any value of the parameter $\theta$ using forward simulation as described in \cite{BBL07}. Here we have $\hat{V}_j(s,\sigma',\theta)$ instead of $V_j(s,\sigma',\theta)$ because forward simulations are based on the estimated transition probability function $P(S_{t+1}|S_t,A_t,\hat{\alpha}_n)$ instead of the true function $P(S_{t+1}|S_t,A_t,\alpha)$. Then, on the second stage, one can test the equilibrium conditions
$$
V_j(s,\sigma_j,\sigma_{-j},\theta)\geq V_j(s,\sigma_j',\sigma_{-j},\theta)
$$
for all $j=1,\dots, N$, $s\in \mathcal{S}$, and $\sigma_j'\in \Sigma$ for some set of strategies $\Sigma$ by considering inequalities
\begin{equation}\label{eq: equilibrium conditions BBL}
\hat{V}_j(s,\hat\sigma_j,\hat\sigma_{-j},\theta)\geq \hat{V}_j(s,\sigma_j',\hat\sigma_{-j},\theta)
\end{equation}
where $\hat\sigma_j = \sigma_j(\hat{\alpha}_{n,2})$ and $\hat\sigma_{-j} = \sigma_{-j}(\hat{\alpha}_{n,2})$ are the estimated policy functions for the firm $j$ and all of its rivals, respectively. Inequalities (\ref{eq: equilibrium conditions BBL}) can be used to test hypotheses about the parameter $\theta$. The number of inequalities is determined by the number of elements in $\Sigma$. Assuming that $\mathcal{A}$, $\mathcal{S}$, and $\mathcal{V}$ are all finite, we obtain $|\Sigma|=|\mathcal{A}|^{|\mathcal{S}|\cdot|\mathcal{V}|}$, so that the total number of inequalities is $N\cdot |\mathcal{S}|\cdot |\Sigma|$, which is a very large number in all but trivial empirical applications.

Inequalities (\ref{eq: equilibrium conditions BBL}) do not fit directly into our testing framework (\ref{eq: null hypothesis})-(\ref{eq: alternative hypothesis}). One possibility to go around this problem is to use a jackknife procedure.
To explain the procedure, assume that the data consist of observations on $n$ i.i.d markets. Let $\hat V_j^{-i}(s,\sigma',\theta)$ and $\hat\sigma^{-i}$ denote the leave-market-$i$-out estimates of $V_j(s,\sigma',\theta)$ and $\sigma$, respectively. Define
\begin{align*}
\tilde{X}_{i j}(s,\theta)&:=n\hat{V}_j(s,\hat\sigma_j,\hat\sigma_{-j},\theta)-(n-1)\hat{V}_j^{-i}(s,\hat\sigma_j^{-i},\hat\sigma_{-j}^{-i},\theta)
\end{align*}
and
\begin{align*}
\tilde{X}_{i j}'(s,\sigma_j',\theta)&:=n\hat{V}_j(s,\sigma_j',\hat\sigma_{-j},\theta)-(n-1)\hat{V}_j^{-i}(s,\sigma_j',\hat\sigma_{-j}^{-i},\theta).
\end{align*}
Also, define
$$
\hat{X}_{i j}(s,\sigma_j',\theta):=\tilde{X}_{i j}'(s,\sigma_j',\theta)-\tilde{X}_{i j}(s,\theta).
$$
Then under some regularity conditions including smoothness of the value function $V_j(s,\sigma)$, one can show that
\begin{equation}\label{eq: approximate data}
\hat{X}_{i j}(s,\sigma_j',\theta)=X_{i j}(s,\sigma_j',\theta)+o_P(1)
\end{equation}
for some $X_{i j}(s,\sigma_j',\theta)$ satisfying
\begin{equation}\label{eq: mean to test}
\Ep[X_{i j}(s,\sigma_j',\theta)]=V_j(s,\sigma_j',\sigma_{-j},\theta)-V_j(s,\sigma,\theta)\leq 0,
\end{equation}
where $X_{i j}(s,\sigma_j',\theta)$'s are independent across markets $i=1,\dots,n$. We provide some details on the derivation of (\ref{eq: approximate data}) and \eqref{eq: mean to test} in Appendix \ref{sec: details} of the online supplement. Now we can use the results of Appendix \ref{sec: approximate moment inequalities} on testing approximate moment inequalities to do inference about the parameter $\theta$ if we replace $X_{i j}(s,\sigma_j',\theta)$ by the ``data'' $\hat{X}_{i j}(s,\sigma_j',\theta)$ and, in addition, we use $(\hat{V}_j(s,\sigma_j',\hat\sigma_{-j},\theta)-\hat{V}_j(s,\hat\sigma_j,\hat\sigma_{-j},\theta))$ instead of $\hat{\mu}_j=n^{-1}\sum_{i=1}^n\hat{X}_{i j}(s,\sigma_j',\theta)$ in the numerator of our test statistic defined in (\ref{eq: test statistic}).\footnote{Note that one of the conditions of Theorem \ref{thm: approximate moment inequalities} is that \eqref{eq: main condition approximate inequalities} holds with $\hat\mu_{j,0} = n^{-1}\sum_{i=1}X_{i j}(s,\sigma_j',\theta)$ in our case, and since we can only guarantee that $\hat X_{i j}(s,\sigma_j',\theta) - X_{i j}(s,\sigma_j',\theta) = O_P(n^{-1/2})$ as in \eqref{eq: approximate data}, this condition may not be satisfied if we define $\hat\mu_j = n^{-1}\sum_{i=1}^n\hat X_{i j}(s,\sigma_j',\theta)$. This condition is satisfied, however, under mild regularity conditions, if we define $\hat\mu_j = \hat{V}_j(s,\sigma_j',\hat\sigma_{-j},\theta)-\hat{V}_j(s,\hat\sigma_j,\hat\sigma_{-j},\theta)$; see the online supplement for details.} Thus, this example fits into our framework as well.\footnote{The jackknife procedure described above may be computationally intensive in some applications but, on the other hand, the required computations are rather straightforward. In addition, this procedure only involves the first stage estimation, which is typically computationally simple. Moreover, bootstrap procedures developed in this paper do not interact with the jackknife procedure, so that the latter procedure has to be performed only once.}

\section{Test statistic}\label{sec: test statistic}

We begin with preparing some notation.
Recall that $\mu_{j} = \Ep [ X_{1j} ]$. We assume that
\begin{equation}
\Ep[ X_{1j}^{2} ] < \infty, \ \sigma_{j}^{2} := \Var (X_{1j}) > 0, \ j=1,\dots, p. \label{eq: variance cond}
\end{equation}
For $j=1,\dots,p$, let $\hat{\mu}_{j}$ and $\hat{\sigma}_{j}^{2}$ denote the sample mean and variance of $X_{1j},\dots,X_{nj}$, respectively, that is,
\[
\hat{\mu}_{j}=\En[X_{ij}]=\frac{1}{n}\sum_{i=1}^n X_{ij}, \ \hat{\sigma}_{j}^{2} = \En [ (X_{ij} - \hat{\mu}_j])^{2} ] = \frac{1}{n} \sum_{i=1}^{n} (X_{ij} - \hat{\mu}_{j})^{2}.
\]
Alternatively, we can use $\tilde{\sigma}_{j}^{2}=(1/(n-1))\sum_{i=1}^{n}(X_{ij}-\hat{\mu}_{j})^{2}$ instead of $\hat{\sigma}_{j}^{2}$, which does not alter the overall conclusions of the theorems ahead. In all what follows, however, we will use $\hat{\sigma}_{j}^{2}$.

There are several different statistics that can be used for testing the null hypothesis (\ref{eq: null hypothesis}) against the alternative (\ref{eq: alternative hypothesis}).
Among all possible statistics, it is natural to consider statistics that take large values when some of $\hat{\mu}_j$'s are large.
In this paper, we focus on the statistic that takes large values when at least one of $\hat{\mu}_j$'s is large. One can also consider either non-Studentized or Studentized versions of the test statistic.
For a non-Studentized statistic, we mean a function of $\hat{\mu}_1,\dots,\hat{\mu}_p$, and for a Studentized statistic, we mean a function of $\hat{\mu}_1/\hat{\sigma}_1,\dots,\hat{\mu}_p/\hat{\sigma}_p$.
Studentized statistics are often considered preferable. In particular, they are scale-invariant (that is, multiplying $X_{1j},\dots,X_{nj}$  by a scalar value does not change the value of the test statistic), and
they typically spread the power evenly among the different moment inequalities $\mu_{j}\leq 0$.
See \cite{RomanoWolf05} for a detailed comparison of Studentized versus non-Studentized statistics in a related context of multiple hypothesis testing. In our case, Studentization also has an advantage that it allows us to derive an analytical critical value for the test under weak moment conditions. In particular, for our SN critical values, we will only require finiteness (existence) of $\Ep[|X_{1j}|^{3}]$ (see Section \ref{sec: analytical plugin}). As far as MB/EB critical values are concerned, our theory can cover a non-Studentized statistic but Studentization leads to easily interpretable regularity conditions. For these reasons, in this paper we study the Studentized version of the test statistic.

To be specific, we focus on the following test statistic:
\begin{equation}\label{eq: test statistic}
T=\max_{1\leq j\leq p} \frac{\sqrt{n}\hat{\mu}_j}{\hat{\sigma}_j}.
\end{equation}
Large values of $T$ indicate that $H_0$ is likely to be violated, so that it would be natural to consider the test of the form
\begin{equation}
\label{eq: test}
T > c \Rightarrow \text{reject} \ H_{0},
\end{equation}
where $c$ is a critical value suitably chosen in such a way that the test has approximately size $\alpha \in (0,1)$.
We will consider various ways for calculating critical values and  prove their validity.

Rigorously speaking, the test statistic $T$ is not defined when $\hat{\sigma}_{j}^{2}=0$ for some $j=1,\dots,p$. In such cases, we interpret the meaning of ``$T > c$''  in (\ref{eq: test}) as $\sqrt{n}\hat{\mu}_j > c \hat{\sigma}_{j}$ for some $j=1,\dots,p$,
which makes sense even if $\hat{\sigma}_{j}^{2}=0$ for some $j=1,\dots,p$. We will obey such conventions if necessary without further mentioning.

Other types of test statistics are possible. For example, one alternative is the test statistic of the form
\begin{equation}\label{eq: quadratic sum test statistic}
T'=\sum_{j=1}^{p} \left ( \max \{ \sqrt{n}\hat{\mu}_j/\hat{\sigma}_j, 0 \} \right )^{2}.
\end{equation}
The statistic $T'$ has an advantage that it is less sensitive to outliers. However, $T'$ leads to good power only if many inequalities are violated simultaneously. In general, $T'$ is preferable against $T$ if the researcher is interested in detecting deviations when many inequalities are violated simultaneously, and $T$ is preferable against $T'$ if the main interest is in detecting deviations when at least one moment inequality is violated too much. When $p$ is large, as in our motivating examples, the statistic $T$ seems preferable over $T'$ because the critical value for the test based on $T$ grows very slowly with $p$ (at most as $(\log p)^{1/2}$) whereas one can expect that the critical value for the test based on $T'$ grows at least polynomially with $p$.

Another alternative is the quasi likelihood-ratio test statistic of the form
\[
T''=\min_{t\leq 0} n(\hat{\mu}-t)^T\hat{\Sigma}^{-1}(\hat{\mu}-t),
\]
where $\hat{\mu}=(\hat{\mu}_1,\dots,\hat{\mu}_p)^T$, $t = (t_{1},\dots, t_{p})^{T} \leq 0$ means $t_j\leq 0$ for all $j=1,\dots,p$, and $\hat{\Sigma}$ is some $p$ by $p$ symmetric positive definite matrix. This statistic in the context of testing moment inequalities was first studied by \cite{Rosen08} when the number of moment inequalities $p$ is fixed; see also \cite{Wolak91} for the analysis of this statistic in a different context. Typically, one wants to take $\hat{\Sigma}$ as a suitable estimate of the covariance matrix of $X_{1}$, denoted by $\Sigma$. However, when $p$ is larger than $n$, it is not possible to consistently estimate $\Sigma$ without imposing some structure (such as sparsity) on it. Moreover, the results of \cite{BaiSaranadasa96} suggest that the statistic $T'$ or its variants may lead to higher power than $T''$ even when $p$ is smaller than but close to $n$. On the other hand, when $p$ is small relative to $n$, the test statistic $T''$ may lead to more powerful tests than those based on $T$ and $T'$ since it takes into account the correlation structure between the inequalities, like GMM does in the setting of moment equalities. For the rest of the paper, we focus on the statistic $T$ and do not provide critical values for the tests based on $T'$ and $T''$.

%

\section{Critical values}
\label{sec: critical value}

In this section, we study several methods to compute critical values for the test statistic $T$ so that under $H_0$, the probability of rejecting $H_0$ does not exceed size $\alpha$ asymptotically. The methods are essentially ordered by increasing computational complexity, increasing strength of required conditions, but also increasing power. We note, however, that all our methods require only mild conditions on the underlying distributions and are computationally rather simple.

 The basic idea for construction of critical values for $T$ lies in the fact that under $H_{0}$,
\begin{equation}\label{eq: under null}
T\leq \max_{1\leq j\leq p}\sqrt{n}(\hat{\mu}_j-\mu_{j})/\hat{\sigma}_{j},
\end{equation}
where the equality holds when all the moment inequalities are binding, that is, $\mu_{j}=0$ for all $j=1,\dots,p$.
Hence in order to make the test to have size $\alpha$, it is enough to choose the critical value  as (a bound on) the $(1-\alpha)$-quantile of the distribution of $\max_{1\leq j\leq p}\sqrt{n}(\hat{\mu}_j-\mu_{j})/\hat{\sigma}_{j}$.
We consider two approaches to construct such critical values: self-normalized and bootstrap methods. We also consider two- and three-step variants of the methods by incorporating inequality selection.


We will use the following notation. Pick any $\alpha \in (0,1/2)$. Let
\begin{equation}\label{eq: Z}
Z_{ij} = (X_{ij}-\mu_{j})/\sigma_{j}, \ \text{and} \ Z_{i}=(Z_{i1},\dots,Z_{ip})^{T}.
\end{equation}
Observe that $\Ep [ Z_{ij} ] = 0$ and $\Ep [ Z_{ij}^{2} ] = 1$. Define
\[
M_{n,k}= \max_{1 \leq j \leq p} \Big(\Ep [|Z_{1j}|^{k}]\Big)^{1/k}, \ k=3,4, \ \ B_{n} = \Big(\Ep\Big[ \max_{1 \leq j \leq p} Z_{1j}^{4}\Big]\Big)^{1/4}.
\]
($M_{n,k}$ and $B_{n}$ depend on $n$ since $p=p_{n}$ (implicitly) depends on $n$.)
Note that by Jensen's inequality, $B_{n} \geq M_{n,4} \geq M_{n,3} \geq 1$. In addition, if $Z_{i j}$'s are all bounded by a constant $C$ almost surely, we have $C\geq B_n$. These inequalities are useful to get a sense of various conditions on $M_{n,3}$, $M_{n,4}$, and $B_n$ imposed in the theorems below.

\subsection{Self-Normalized methods}\label{sec: analytical methods}

\subsubsection{One-step method}
\label{sec: analytical plugin}
The self-normalized method (abbreviated as the SN method in what follows) we consider is based upon the union bound combined with a moderate deviation inequality for self-normalized sums.  Because of inequality (\ref{eq: under null}), under $H_{0}$,
\begin{equation}\label{eq: SN union bound}
\Pr (T>c) \leq \sum_{j=1}^{p} \Pr ( \sqrt{n}(\hat{\mu}_{j}-\mu_{j})/\hat{\sigma}_{j} > c).
\end{equation}
At a first sight, this bound might look too crude when $p$ is large since, as long as $X_{ij}$'s have polynomial tails, applying, for example, the Markov inequality would only allow us to show that the right-hand side of \eqref{eq: SN union bound} is bounded from above by $\alpha$ when $c$ is growing {\em polynomially} fast with $p$, and using such $c$ would yield a test with low power. However, the Markov inequality is far from being sharp here. Instead, we will exploit the self-normalizing nature of the quantity $ \sqrt{n}(\hat{\mu}_{j}-\mu_{j})/\hat{\sigma}_{j}$ to show that the right-hand side of \eqref{eq: SN union bound} is bounded from above by $\alpha$, up to a vanishing term, even if $c$ is growing {\em logarithmically} fast with $p$. Using such $c$ will in turn yield a test with much better power properties.

For $j=1,\dots,p$, define
\[
U_{j}=\sqrt{n}\En [ Z_{ij} ]/\sqrt{\En [ Z_{ij}^{2}]}.
\]
By simple algebra, we see that
\[
\sqrt{n}(\hat{\mu}_j-\mu_{j})/\hat{\sigma}_j=U_j/\sqrt{1-U_j^2/n},
\]
where the right-hand side is increasing in $U_{j}$ as long as $U_{j} \geq 0$. Hence under $H_0$,
\begin{equation}
\Pr (T>c)
\leq \sum_{j=1}^{p} \Pr \left ( U_{j} > c/\sqrt{1+c^2/n} \right), \ c \geq 0. \label{eq: first step}
\end{equation}
Now, the moderate deviation inequality for self-normalized sums of \cite{JSW03} (see Lemma \ref{lem: moderate deviations} in the online supplement) implies that for moderately large $c \geq 0$,
\[
 \Pr \left ( U_{j} > c/\sqrt{1+c^2/n} \right) \approx \Pr \left( N(0,1) > c/\sqrt{1+c^2/n}\right)
\]
even if $Z_{i j}$ only have $2+\delta$ finite moments for some $\delta>0$.
Therefore, we take the critical value as
\begin{equation}
c^{SN}(\alpha)=\frac{\Phi^{-1}(1-\alpha/p)}{\sqrt{1-\Phi^{-1}(1-\alpha/p)^2/n}},  \label{SN critical value}
\end{equation}
where $\Phi(\cdot)$ is the distribution function of the standard normal distribution, and $\Phi^{-1}(\cdot)$ is its quantile function.
We will call $c^{SN}(\alpha)$ the (one-step) SN critical value with size $\alpha$ as its derivation depends on the moderate deviation inequality for self-normalized sums.
Note that
\[
\Phi^{-1}(1-\alpha/p) \sim \sqrt{\log (p/\alpha)},
\]
so that $c^{SN}(\alpha)$ depends on $p$ only through $\log p$.



The following theorem provides a non-asymptotic bound on the probability that the test statistic $T$ exceeds the SN critical value $c^{SN}(\alpha)$ under $H_0$ and shows that the bound converges to $\alpha$ under mild regularity conditions, thereby validating the SN method.

\begin{theorem}[Validity of one-step SN method]
\label{thm: analytical plugin method}
Suppose that $M_{n,3}\Phi^{-1}(1-\alpha/p) \leq n^{1/6}$.  Then under $H_0$,
\begin{equation}\label{eq: theorem 1 nonasymptotic}
\Pr(T>c^{SN}(\alpha))\leq \alpha \left [ 1 + K n^{-1/2} M_{n,3}^{3} \{ 1+\Phi^{-1}(1-\alpha/p)\}^{3} \right ],
\end{equation}
where $K$ is a universal constant. Hence, if there exist constants $0<c_{1}<1/2$ and $C_{1} >0$ such that
\begin{equation}
M_{n,3}^{3} \log^{3/2} (p/\alpha) \leq C_{1}n^{1/2-c_{1}}, \label{eq: cond1}
\end{equation}
then there exists a positive constant $C$ depending only on $C_1$ such that under $H_{0}$,
\begin{equation}\label{eq: SN test size bound}
\Pr(T>c^{SN}(\alpha)) \leq \alpha + Cn^{-c_{1}}.
\end{equation}
Moreover, this bound holds uniformly over all distributions $\mathcal L_X$ satisfying (\ref{eq: variance cond}) and (\ref{eq: cond1}). In addition, if \eqref{eq: cond1} holds, all components of $X_1$ are independent, $\mu_j = 0$ for all $1\leq j\leq p$, and $p = p_n\to\infty$, then
\begin{equation}\label{eq: third assertion sn critical value}
\Pr(T > c^{SN}(\alpha)) \to 1 - e^{-\alpha}.
\end{equation}
\end{theorem}

\begin{remark}[On conditions of Theorem \ref{thm: analytical plugin method}]
Since condition \eqref{eq: cond1} is abstract, it is instructive to see how this condition looks in particular examples. Suppose, for example, that all $X_{i j}$'s are Gaussian. Then all $Z_{i j}$'s are standard Gaussian, and so $\Ep[|Z_{1 j}|^3] = (8/\pi)^{1/2}$. Hence, it follows that $M_{n,3} = \max_{1\leq j\leq p}(\Ep[|Z_{1 j}|^3])^{1/3} = (8/\pi)^{1/6}$, and condition \eqref{eq: cond1} reduces to $\log^{3/2}(p/\alpha) \leq C_1 n^{1/2-c_1}$ (with a different constant $C_1$). When $\alpha$ is independent of $n$, the condition further reduces to $(\log^3 p) / n \leq C_1 n^{-c_1}$ (with possibly different constants $c_1$ and $C_1$).
\end{remark}

\begin{remark}[Relaxing conditions of Theorem \ref{thm: analytical plugin method}]
The theorem assumes that $\max_{1 \leq j \leq p} \Ep [| X_{1j}|^{3} ] < \infty$ (so that $M_{n,3}<\infty$) but allows this quantity to diverges as $n \to \infty$ (recall $p=p_{n}$). In principle, $M_{n,3}$ that appears in the theorem could be replaced by $\max_{1 \leq j \leq p} (\Ep [ | Z_{1j} |^{2+\nu}])^{1/(2+\nu)}$ for $0<\nu \leq 1$,  which would further weaken moment conditions; however, for the sake of simplicity of presentation, we do not explore this generalization.
\end{remark}

\begin{remark}[On conservativeness of the one-step SN method]
The last asserted claim of Theorem \ref{thm: analytical plugin method}, \eqref{eq: third assertion sn critical value}, shows that when $p$ is large, all components of $X_1$ are independent, and all inequalities satisfy the null and are binding, the one-step SN method is approximately non-conservative. Indeed, the nominal level $\alpha$ is typically small, e.g. 5\% or 10\%, so that $e^{-\alpha} \approx 1 - \alpha$, and the probability of rejecting the null is approximately $\alpha$ in this case. 
\end{remark}

\begin{remark}[Comparison with the classical Bonferroni procedure]
The classical Bonferroni approach to test \eqref{eq: null hypothesis} against  \eqref{eq: alternative hypothesis} would be to compare the statistic $T$ with the Bonferroni critical value $c^{Bon}(\alpha) = \Phi^{-1}(1 - \alpha / p)$. It is straightforward to show using standard techniques that this approach works (controls size) when $p$ is much smaller than $n$ or $X_i$'s are Gaussian. In contrast, our techniques do not require these conditions, which is important because it allows us to test many moment inequalities in a wide variety of settings, without assuming Gaussianity. In addition, using our techniques, it is possible to show that the Bonferroni approach also works under the same conditions as those required for our SN method; see Theorem \ref{thm: bonferroni} in the online supplement.
\end{remark}


\subsubsection{Two-step method}

We now turn  to combine the SN method with inequality selection.
We begin with stating the motivation for inequality selection.

Observe that when $\mu_j<0$ for some $j=1,\dots,p$, inequality (\ref{eq: under null}) becomes strict, so that when there are many $j$ for which $\mu_{j}$ are negative and large in absolute value, the resulting test with one-step SN critical values would tend to be unnecessarily conservative.
Hence it is intuitively clear that, in order to improve the power of the test, it is better to exclude $j$ for which $\mu_{j}$ are below some (negative) threshold when computing critical values. This is the basic idea behind inequality selection.

More formally, let $0 < \beta_{n}<\alpha/2$ be some constant. For generality, we allow $\beta_{n}$ to depend on $n$; in particular, $\beta_{n}$ is allowed to decrease to zero as the sample size $n$ increases. Let $c^{SN}(\beta_{n})$ be the SN critical value with size $\beta_{n}$, and define the set $\hat{J}_{SN} \subset \{ 1,\dots, p \}$ by
\begin{equation}
\hat{J}_{SN}: = \left\{ j \in \{ 1,\dots, p \} : \sqrt{n} \hat{\mu}_{j}/\hat{\sigma}_j > -2 c^{SN}(\beta_{n}) \right\}. \label{eq: SN set}
\end{equation}
Let $\hat{k}$ denote the number of elements in $\hat{J}_{SN}$, that is,
\[
\hat{k} = | \hat{J}_{SN} |.
\]
Then the two-step SN critical value is defined by
\begin{equation}
c^{SN,2S}(\alpha)=
\begin{cases}
\frac{\Phi^{-1}(1-(\alpha-2\beta_{n})/\hat{k})}{\sqrt{1-\Phi^{-1}(1-(\alpha-2\beta_{n})/\hat{k})^2/n}}, &\text{if $\hat{k} \geq 1$}, \\
0, & \text{if $\hat{k} = 0$}.
\end{cases}
\label{SN-MS critical value}
\end{equation}

%

The following theorem establishes validity of this critical value.

\begin{theorem}[Validity of two-step SN method]
\label{thm: analytical rms method}
Suppose that there exist constants $0 < c_{1} <1/2$ and $C_{1} >0$ such that
\begin{equation}\label{eq: cond2}
\begin{split}
M_{n,3}^{3}  \log^{3/2} \left(\frac{p}{\beta_{n}\wedge(\alpha - 2\beta_n)}\right) \leq C_{1}n^{1/2-c_{1}},\\
\text{and } \ B_{n}^{2} \log^{2} (p/\beta_{n}) \leq C_{1} n^{1/2-c_{1}}.
\end{split}
\end{equation}
Then there exist positive constants $c,C$ depending only on $\alpha,c_{1}, C_{1}$ such that under $H_{0}$,
\begin{equation}\label{eq: thm 42 first bound}
\Pr(T>c^{SN,2S}(\alpha))\leq \alpha+Cn^{-c}.
\end{equation}
Moreover, this bound holds uniformly over all distributions $\mathcal L_X$ satisfying (\ref{eq: variance cond}) and (\ref{eq: cond2}). In addition, if all components of $X_1$ are independent, $\mu_j = 0$ for all $1\leq j\leq p$, $p = p_n\to\infty$, and $\beta_n\to 0$, then
\begin{equation}\label{eq: thm42 additional bound}
\Pr(T > c^{SN,2S}(\alpha)) \to 1 - e^{-\alpha}.
\end{equation}
\end{theorem}
\begin{remark}[Comparing conditions of one-step and two-step SN methods]\label{rem: prim conditions two-step SN}
Observe that the condition \eqref{eq: cond2} required for the validity of the two-step SN method in Theorem \ref{thm: analytical rms method} is stronger than the condition \eqref{eq: cond1} required for the validity of the one-step SN method in Theorem \ref{thm: analytical plugin method}. To see the meaning of \eqref{eq: cond2} under primitive conditions, suppose that all $X_{i j}$'s are Gaussian. Then all $Z_{i j}$'s are standard Gaussian, and so $B_n = (\Ep[\max_{1\leq j\leq p}Z_{1 j}^4])^{1/4} \leq C(\log p)^{1/2}$ for some constant $C>0$. Hence, given that $M_{n,3}\leq C$ in this case and $\beta_n < 1$, it follows that condition \eqref{eq: cond2} is implied by $\log^3(p/(\beta_n\wedge(\alpha - 2\beta_n))) \leq C_1 n^{1/2-c_1}$ (with a different constant $C_1$). Hence, if $c n^{-1/C} \leq \beta_n \leq \alpha/2 - c$, it follows that condition \eqref{eq: cond2} holds when $\log^6p/n \leq C_1 n^{-c_1}$ (with different constants $c_1$ and $C_1$).
\end{remark}

\subsection{Bootstrap methods}\label{sec: simulation methods}

In this section, we consider bootstrap methods for calculating critical values. Specifically, we consider Multiplier Bootstrap (MB) and Empirical (nonparametric, or Efron's) Bootstrap (EB) methods.
The methods studied in this section are computationally harder than those in the previous section but they lead to less conservative tests. In particular, we will show that when all the moment inequalities are binding (that is, $\mu_{j}=0$ for all $1 \leq j \leq p$), the asymptotic size of the tests based on these methods coincides with the nominal size.

\subsubsection{One-step method}
We first consider the one-step method.
Recall that, in order to make the test to have size $\alpha$,
it is enough to choose the critical value  as (a bound on) the $(1-\alpha)$-quantile of the distribution of
\[
\max_{1 \leq j \leq p} \sqrt{n}(\hat{\mu}_{j}-\mu_{j})/\hat{\sigma}_{j}.
\]
The SN method finds such a bound by using the union bound and the moderate deviation inequality for self-normalized sums.
However, the SN method may be conservative as it ignores correlation between the coordinates in $X_{i}$.

Alternatively, we consider here a Gaussian approximation. Observe first that under suitable regularity conditions,
\[
\max_{1 \leq j \leq p} \sqrt{n}(\hat{\mu}_{j}-\mu_{j})/\hat{\sigma}_{j} \approx \max_{1 \leq j \leq p} \sqrt{n}(\hat{\mu}_{j}-\mu_{j})/\sigma_{j} = \max_{1 \leq j \leq n} \sqrt{n} \En[ Z_{ij}],
\]
where $Z_{i}=(Z_{i1},\dots,Z_{ip})^{T}$ are defined in (\ref{eq: Z}).
{\em When $p$ is fixed}, the central limit theorem guarantees that as $n \to \infty$,
\[
\sqrt{n} \En [ Z_{i} ] \stackrel{d}{\to} Y, \ \text{with} \ Y=(Y_1,\dots,Y_p)^T \sim N(0,\Ep[ Z_{1}Z_{1}^{T}]),
\]
which, by the continuous mapping theorem, implies that
\[
\max_{1 \leq j \leq p} \sqrt{n} \En [ Z_{ij} ] \stackrel{d}{\to} \max_{1 \leq j \leq p} Y_{j}.
\]
Hence in this case it is enough to take the critical value as the  $(1-\alpha)$-quantile of the distribution of $\max_{1\leq j\leq p}Y_j$.

{\em When $p$ grows with $n$}, however, the concept of convergence in distribution does not apply, and different tools should be used to derive an appropriate critical value for the test. One possible approach is to use a Berry-Esseen theorem that provides a suitable non-asymptotic bound between the distributions of $\sqrt{n} \En [ Z_{i} ]$ and $Y$; see, for example, \cite{Gotze91} and \cite{Bentkus03}. However, such Berry-Esseen bounds require $p$ to be small in comparison with $n$ in order to guarantee that the distribution of $\sqrt{n} \En [ Z_{i} ]$ is  close to that of $Y$.
Another possible approach is to compare the distributions of $\max_{1\leq j\leq p}\sqrt{n}\En[Z_{ij}]$ and $\max_{1\leq j\leq p}Y_j$ directly, avoiding the comparison of distributions of the whole vectors $\sqrt{n} \En [Z_{i}]$ and $Y$.
Our recent work \citep{CCK12, CCK14} shows that, under mild regularity conditions,  the distribution of $\max_{1\leq j\leq p} \sqrt{n} \En[Z_{i j}] $ can be approximated by that of $\max_{1\leq j\leq p}Y_j$  in the sense of Kolmogorov distance {\em even when  $p$ is larger or much larger than $n$}.\footnote{The Kolmogorov distance between the distributions of two random variables $\xi$ and $\eta$ is defined by $\sup_{t \in \R} | \Pr(\xi \leq t) - \Pr(\eta \leq t)|$.} This result implies that we can still use the $(1-\alpha)$-quantile of the distribution of $\max_{1\leq j\leq p}Y_j$ even when $p$ grows with $n$ and is potentially much larger than $n$.\footnote{Some applications of this result can be found in \cite{Chetverikov2011,Chetverikov2012}, \cite{WassermanKolarRinaldo2013}, and \cite{Wasserman13}.}

Still,  the distribution of $\max_{1\leq j\leq p}Y_j$ is typically unknown because the covariance structure of $Y$ is unknown. Hence we will approximate the distribution of $\max_{1\leq j\leq p}Y_j$ by one of the following two bootstrap procedures:

\begin{algorithm}[Multiplier bootstrap]
\
\begin{enumerate}
\item[1.] Generate independent  standard normal random variables $\epsilon_1,\dots,\epsilon_n$  independent of the data $X_{1}^{n} = \{ X_{1},\dots,X_{n} \}$. \\
\item[2.] Construct the multiplier bootstrap test statistic
\begin{equation}
W^{MB}=\max_{1\leq j\leq p}\frac{\sqrt{n}\En[\epsilon_i(X_{ij}-\hat{\mu}_j)]}{\hat{\sigma}_j}. \label{eq: MB stat}
\end{equation}
\item[3.]
Calculate $c^{MB}(\alpha)$ as
\begin{equation}
c^{MB}(\alpha)=\text{conditional $(1-\alpha)$-quantile of $W^{MB}$ given $X_{1}^{n}$}. \label{MB critical value}
\end{equation}
\end{enumerate}
\end{algorithm}

\begin{algorithm}[Empirical bootstrap]
\
\begin{enumerate}
\item[1.] Generate a bootstrap sample $X_1^*,\dots,X_n^*$ as i.i.d. draws from the empirical distribution of $X_1^n=\{X_1,\dots,X_n\}$. \\
\item[2.] Construct the empirical bootstrap test statistic
\begin{equation}
W^{EB}=\max_{1\leq j\leq p}\frac{\sqrt{n}\En[X_{i j}^*-\hat{\mu}_j]}{\hat{\sigma}_j}. \label{eq: EB stat}
\end{equation}
\item[3.]
Calculate $c^{EB}(\alpha)$ as
\begin{equation}
c^{EB}(\alpha)=\text{conditional $(1-\alpha)$-quantile of $W^{EB}$ given $X_{1}^{n}$}. \label{EB critical value}
\end{equation}
\end{enumerate}
\end{algorithm}

We will call $c^{MB}(\alpha)$ and $c^{EB}(\alpha)$ the (one-step) Multiplier Bootstrap (MB) and Empirical Bootstrap (EB) critical values with size $\alpha$.
In practice conditional quantiles of $W^{MB}$ or $W^{EB}$ can be computed with any precision by using simulation.

Intuitively, it is expected that the multiplier bootstrap works well since conditional on the data $X_{1}^{n}$, the vector
\[
\left ( \frac{\sqrt{n}\En[\epsilon_i(X_{ij}-\hat{\mu}_j)]}{\hat{\sigma}_j} \right )_{1 \leq j \leq p}
\]
has the centered normal distribution with covariance matrix
 \begin{equation}\label{eq: empirical covariance matrix}
\En \left [ \frac{(X_{ij}-\hat{\mu}_j)}{\hat{\sigma}_{j}} \frac{(X_{ik}-\hat{\mu}_k)}{\hat{\sigma}_{k}} \right ],   \ 1 \leq j,k \leq p,
\end{equation}
which should be close to the covariance matrix of the vector $Y$. Indeed, by Theorem 2 in \cite{CCK13}, the primary factor for the bound on the Kolmogorov distance between the conditional distribution of $W$ and the distribution of $\max_{1 \leq j \leq p} Y_{j}$ is
\[
\max_{1 \leq j,k \leq p} \left | \En \left [ \frac{(X_{ij}-\hat{\mu}_j)}{\hat{\sigma}_{j}} \frac{(X_{ik}-\hat{\mu}_k)}{\hat{\sigma}_{k}} \right ] - \Ep[ Z_{1j} Z_{1k} ] \right |,
\]
which we show to be small under suitable conditions even when $p \gg n$.

In turn, the empirical bootstrap is expected to work well since conditional on the data $X_1^n$, the maximum of the random vector
$$
\left(\frac{\sqrt{n}\En[X_{i j}^*-\hat{\mu}_j]}{\hat{\sigma}_j}\right)_{1\leq j\leq p}
$$
can be well approximated in distibution by the maximum of a random vector with centered normal distribution with covariance matrix (\ref{eq: empirical covariance matrix}) even when $p\gg n$.

The following theorem formally establishes validity of the MB and EB critical values.

\begin{theorem}
[Validity of one-step MB and EB methods]
\label{thm: simulation plugin method}
Let $c^{B}(\alpha)$ stand either for $c^{MB}(\alpha)$ or $c^{EB}(\alpha)$.
Suppose that there exist constants $0 < c_{1} < 1/2$ and $C_{1}>0$ such that
\begin{equation}
(M_{n,3}^{3} \vee M_{n,4}^{2} \vee B_{n})^{2} \log^{7/2} (pn) \leq C_1n^{1/2-c_1}. \label{eq: cond3}
\end{equation}
Then there exist positive constants $c,C$ depending only on $c_{1},C_{1}$ such that under $H_{0}$,
\begin{equation}\label{eq: simulation plugin control}
\Pr(T>c^{B}(\alpha))\leq \alpha+Cn^{-c}.
\end{equation}
In addition, if $\mu_{j}=0$ for all $1 \leq j \leq p$, then
\begin{equation}\label{eq: simulation plugin control2}
|\Pr(T>c^{B}(\alpha))-\alpha|\leq Cn^{-c}.
\end{equation}
Moreover, both bounds hold uniformly over all distributions $\mathcal L_X$ satisfying (\ref{eq: variance cond}) and (\ref{eq: cond3}).
\end{theorem}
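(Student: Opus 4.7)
The plan is to reduce the problem to a Gaussian approximation together with a bootstrap comparison, exploiting the high-dimensional central limit theorems developed in the authors' companion work (CCK12, CCK14, CCK13). Under $H_0$, inequality (\ref{eq: under null}) gives $T \leq \max_{1\leq j\leq p} \sqrt{n}(\hat\mu_j-\mu_j)/\hat\sigma_j$, with equality when every inequality binds, so it suffices to sandwich the distribution of the right-hand side by a common Gaussian reference distribution and then to show that both $c^{MB}(\alpha)$ and $c^{EB}(\alpha)$ approximate the corresponding $(1-\alpha)$-quantile.

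First I would reduce to the standardized scale. Let $S := \max_{1\leq j\leq p}\sqrt{n}\En[Z_{ij}]$ with $Z_{ij}=(X_{ij}-\mu_j)/\sigma_j$. Writing $\hat\sigma_j/\sigma_j = (1+\Delta_j)^{1/2}$ with $\Delta_j = \En[Z_{ij}^2]-1 - (\En[Z_{ij}])^2$, a Bernstein/maximal inequality using the $B_n$ envelope gives $\max_j |\Delta_j| = O_P(n^{-1/2}B_n\sqrt{\log p} + n^{-1}B_n^2\log p)$, which under (\ref{eq: cond3}) is $o(1/\log p)$ with high probability. Combined with the anti-concentration inequality for the maximum of a Gaussian vector (Nazarov-type; see CCK14), this translates the Studentized maximum into $S$ up to an additive error $n^{-c}$ in distribution function.

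Second, I would invoke the high-dimensional Gaussian approximation (CCK13, Proposition 2.1 / Theorem 2.1): under (\ref{eq: cond3}), there exists $Y\sim N(0,\Ep[Z_1Z_1^T])$ such that $\sup_{t\in\R}|\Pr(S\leq t)-\Pr(\max_j Y_j\leq t)|\leq C n^{-c}$. Third, for the bootstrap side, I would apply the Gaussian comparison theorem for bootstrap maxima (CCK13, Theorem 2): both $W^{MB}$ and $W^{EB}$ have conditional distributions close (in Kolmogorov distance, given $X_1^n$) to that of $\max_j Y_j$, provided the empirical covariance matrix in (\ref{eq: empirical covariance matrix}) is close to $\Ep[Z_1Z_1^T]$ entrywise. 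That latter closeness follows from the bound
\[
\max_{1\leq j,k\leq p}\Big|\En\Big[\tfrac{(X_{ij}-\hat\mu_j)(X_{ik}-\hat\mu_k)}{\hat\sigma_j\hat\sigma_k}\Big]-\Ep[Z_{1j}Z_{1k}]\Big| = O_P\!\big(n^{-c}\big),
\]
obtained from Bernstein-type maximal inequalities on products of bounded-moment variables, using the $M_{n,3},M_{n,4},B_n$ bounds in (\ref{eq: cond3}). (For the EB version, the comparison is handled by Theorem 4.2 of CCK13 instead.) These two steps together give $\sup_{t\in\R}|\Pr(W^{B}\leq t\mid X_1^n)-\Pr(\max_jY_j\leq t)|=O_P(n^{-c})$.

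Finally, combining the Gaussian approximation for $S$ with the bootstrap comparison and applying Nazarov's anti-concentration once more (to convert a distribution-function bound into a quantile bound) yields $\Pr(S > c^B(\alpha))\leq \alpha + Cn^{-c}$, hence (\ref{eq: simulation plugin control}) via $T\leq S$ plus the Studentization error. When all inequalities bind, $T = S$ (up to the same Studentization error) and anti-concentration gives the two-sided bound (\ref{eq: simulation plugin control2}). Uniformity over the class of distributions is automatic because every constant in the intermediate bounds depends only on $(c_1,C_1)$ and the quantities $M_{n,3},M_{n,4},B_n$ that appear in (\ref{eq: cond3}). The main obstacle I expect is the bookkeeping required to pass cleanly from Studentization by $\hat\sigma_j$ to standardization by $\sigma_j$ while keeping the error polynomially small in $n$ under the weak moment assumption (\ref{eq: cond3}); this is where the envelope $B_n$ and the logarithmic factor $\log^{7/2}(pn)$ are consumed.
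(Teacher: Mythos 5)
Your proposal follows essentially the same route as the paper's proof: replace Studentization by standardization with an error controlled via the variance-ratio bound, invoke the high-dimensional Gaussian approximation for the standardized maximum, establish bootstrap consistency through closeness of the empirical correlation matrix to $\Ep[Z_1Z_1^T]$ (via the companion CCK results for the MB and EB schemes), and glue the quantile and distribution-function bounds together with Gaussian anti-concentration. The only cosmetic difference is that the paper uses a Fuk--Nagaev type inequality rather than a Bernstein bound to control $\max_j|\hat\sigma_j/\sigma_j-1|$ under the polynomial moment condition, but the architecture and the role of each ingredient are identical.
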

\begin{remark}[High dimension bootstrap CLT]
The result (\ref{eq: simulation plugin control2}) can be understood as a high dimensional bootstrap CLT for maxima of {\em studentized} sample averages. It shows that such maxima can be approximated either by multiplier or empirical bootstrap methods even if maxima are taken over (very) many sample averages. Moreover, the distributional approximation holds with polynomially (in $n$) small error. This result complements a high dimensional bootstrap CLT for {\em non-studentized} sample averages derived in \cite{CCK12} and \cite{CCK14}, and may be of interest in many other settings, well beyond the problem of testing many moment inequalities.
\end{remark}

\begin{remark}[Comparison with \citealp{White00}]
\cite{White00} is relevant to our one-step MB/EB methods in
the sense that \cite{White00} considers a max-type statistic for an inequality testing problem and
applies bootstrap to calibrate critical values. However, \cite{White00} does not consider
Studentization, and more importantly 1) does not allow the number of inequalities
increasing with the sample size, and 2) does not consider inequality selection so that
his test would be conservative (see the next subsection on our two-step MB/EB methods). In fact, \cite{White00} acknowledges the importance
of extending his analysis to the case where the number of inequalities increases with
the sample size, and explicitly states that ``it is natural to consider what happens when
$l$ grows with $T$'' [$l$ is the number of inequalities tested and $T$ is the sample size] but ``
rigorous treatment for our context is beyond our present scope'' \citep[][p.1110-1111]{White00}. Our results on the one-step MB/EB methods address this important question
in a far more general setting where the number of inequalities can be much larger than
the sample size. In addition, our results provided finite sample error bounds that hold uniformly
over a wide class of underlying distributions, while \cite{White00} only derives pointwise
asymptotic results on validity of the test.
\end{remark}

\begin{remark}[Other bootstrap procedures]
There exist many different bootstrap procedures in the literature, each with its own advantages and disadvantages. In this paper, we focused on multiplier and empirical bootstraps, and we leave analysis of more general exchangeably weighted bootstraps, which include many existing bootstrap procedures as a special case (see, for example, \cite{PW93}), in the high dimensional setting for future work.
\end{remark}

\begin{remark}[Comparing conditions of two-step SN method and one-step MB/EB methods]
Observe that the condition \eqref{eq: cond3} required for the validity of the one-step MB/EB methods in Theorem \ref{thm: simulation plugin method} is stronger than the condition \eqref{eq: cond2} required for the validity of the two-step SN method in Theorem \ref{thm: analytical rms method}. To see the meaning of \eqref{eq: cond3} under primitive conditions, suppose that all $X_{i j}$'s are Gaussian. As in Comment \ref{rem: prim conditions two-step SN}, it then follows that $M_{n,3}\leq C$ and $B_n \leq C(\log p)^{1/2}$ for some constant $C$ in this case. Moreover, it is easy to see that $M_{n,4}\leq C$ as well. Therefore, condition \ref{rem: prim conditions two-step SN} holds if $(\log^9 p)/n\leq C_1 n^{-c_1}$ (with possibly different constants $c_1$ and $C_1$).
\end{remark}

\subsubsection{Two-step methods}
We now consider to combine bootstrap methods with inequality selection.
To describe these procedures, let $0 < \beta_{n} < \alpha/2$ be some constant. As in the previous section, we allow $\beta_{n}$  to depend on $n$. Let $c^{MB}(\beta_n)$ and $c^{EB}(\beta_n)$ be one-step MB and EB critical values with size $\beta_{n}$, respectively. Define the sets $\hat{J}_{MB}$ and $\hat{J}_{EB}$ by
\[
\hat{J}_{B}:=\{j\in\{1,\dots,p\}:\sqrt{n}\hat{\mu}_j/\hat{\sigma}_j > -2 c^{B}(\beta_{n})\}
\]
where $B$ stands either for $MB$ or $EB$.
Then the two-step MB and EB critical values, $c^{MB,2S}(\alpha)$ and $c^{EB,2S}(\alpha)$, are defined by the following procedures:

\begin{algorithm}[Multiplier bootstrap with inequality selection]
\
\begin{enumerate}
\item[1.] Generate independent   standard normal  random variables $\epsilon_1,\dots,\epsilon_n$  independent of the data $X_{1}^{n}$.
\item[2.] Construct the multiplier bootstrap test statistic
\[
W_{\hat{J}_{MB}}=
\begin{cases}
\max_{j\in\hat{J}_{MB}}\frac{\sqrt{n}\En[\epsilon_i(X_{ij}-\hat{\mu}_j)]}{\hat{\sigma}_j}, &\text{if }\hat{J}_{MB}\text{ is not empty,}\\
0 &\text{if }\hat{J}_{MB}\text{ is empty}.
\end{cases}
\]
\item[3.]
Calculate $c^{MB,2S}(\alpha)$ as
\begin{equation}
c^{MB,2S}(\alpha)
=\text{conditional $(1-\alpha + 2\beta_n)$-quantile of $W_{\hat{J}_{MB}}$ given $X_{1}^{n}$}. \label{MB-MS critical value}
\end{equation}
\end{enumerate}
\end{algorithm}

\begin{algorithm}[Empirical bootstrap with inequality selection]
\
\begin{enumerate}
\item[1.] Generate a bootstrap sample $X_1^*,\dots,X_n^*$ as i.i.d. draws from the empirical distribution of $X_1^n=\{X_1,\dots,X_n\}$.
\item[2.] Construct the empirical bootstrap test statistic
\[
W_{\hat{J}_{EB}}=
\begin{cases}
\max_{j\in\hat{J}_{EB}}\frac{\sqrt{n}\En[X_{i j}^*-\hat{\mu}_j]}{\hat{\sigma}_j}, &\text{if }\hat{J}_{EB}\text{ is not empty,}\\
0 &\text{if }\hat{J}_{EB}\text{ is empty}.
\end{cases}
\]
\item[3.]
Calculate $c^{EB,2S}(\alpha)$ as
\begin{equation}
c^{EB,2S}(\alpha)
=\text{conditional $(1-\alpha + 2\beta_n)$-quantile of $W_{\hat{J}_{EB}}$ given $X_{1}^{n}$}. \label{EB-MS critical value}
\end{equation}
\end{enumerate}
\end{algorithm}

The following theorem establishes validity of the two-step MB and EB critical values.

\begin{theorem}
[Validity of two-step MB and EB methods]
\label{thm: simulation MS method}
Let $c^{B,2S}(\alpha)$ stand either for $c^{MB,2S}(\alpha)$ or $c^{EB,2S}(\alpha)$.
Suppose that the assumption of Theorem \ref{thm: simulation plugin method} is satisfied.  Moreover, suppose that  $\log (1/\beta_{n}) \leq C_{1}\log n$. 
Then there exist positive constants $c,C$ depending only on $c_{1},C_{1}$ such that under $H_{0}$, 
$$
\Pr(T>c^{B,2S}(\alpha))\leq \alpha+Cn^{-c}.
$$ 
In addition, if $\mu_{j}=0$ for all $1 \leq j \leq p$, then 
$$
\Pr(T>c^{B,2S}(\alpha)) \geq  \alpha - 3 \beta_{n} - Cn^{-c},
$$
so that under an extra assumption that $\beta_n\leq C_1n^{-c_1}$, then 
$$
|\Pr(T>c^{B,2S}(\alpha))-\alpha|\leq Cn^{-c}.
$$
Moreover, all these bounds hold uniformly over all distributions $\mathcal L_X$ satisfying (\ref{eq: variance cond}) and (\ref{eq: cond3}). 
\end{theorem}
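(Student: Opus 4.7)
The plan is to reduce the two-step procedure to the one-step analysis of Theorem \ref{thm: simulation plugin method} by showing that, on a high-probability event, the data-dependent selection set $\hat{J}_B$ is sandwiched between two deterministic population subsets, and then invoking the Gaussian and conditional bootstrap approximations used to prove Theorem \ref{thm: simulation plugin method} on those fixed sets. Setting $\ell_n := c^B(\beta_n)$, introduce the population analogues
\[
J^I := \{j : \sqrt{n}\mu_j/\sigma_j \ge -\ell_n/2\}, \qquad J^O := \{j : \sqrt{n}\mu_j/\sigma_j \ge -3\ell_n\}.
\]
The preparatory step is to construct an event $E_n$, with $\Pr(E_n^c) \le \beta_n + Cn^{-c}$, on which $\max_j |\sqrt{n}(\hat{\mu}_j - \mu_j)/\sigma_j| \le \ell_n$ and $\max_j |\hat{\sigma}_j/\sigma_j - 1| \le 1/2$ hold simultaneously. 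The first bound is a two-sided analogue of Theorem \ref{thm: simulation plugin method} applied at level $\beta_n$, proved by the same CCK13/CCK14 Gaussian and bootstrap approximations; the second is standard variance concentration under (\ref{eq: cond3}). On $E_n$, the definitions of $\hat{J}_B$, $J^I$, $J^O$ immediately give $J^I \subseteq \hat{J}_B \subseteq J^O$.

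For the upper bound under $H_0$, pathwise monotonicity of the bootstrap max combined with the sandwich yields
\[
c^B_{J^I}(\alpha - 2\beta_n) \le c^{B,2S}(\alpha) \le c^B_{J^O}(\alpha - 2\beta_n) \quad \text{on } E_n,
\]
where $c^B_J(\tau)$ denotes the conditional one-step bootstrap $(1-\tau)$-quantile with index set restricted to the fixed set $J$. Moreover, on $E_n$ the argmax of $T$ must lie in $\hat{J}_B \subseteq J^O$ whenever $T > c^{B,2S}(\alpha) \ge -2\ell_n$, and the decomposition $T_j = \sqrt{n}(\hat{\mu}_j - \mu_j)/\hat{\sigma}_j + \sqrt{n}\mu_j/\hat{\sigma}_j$ together with $\sqrt{n}\mu_j/\sigma_j \le -\ell_n/2$ and the variance-ratio bound shows that for $j \in J^O \setminus J^I$ the mean term contributes a downward shift of at least $\ell_n/3$. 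Combining this shift with the Gaussian and conditional bootstrap approximations applied on the fixed sets $J^I$ and $J^O$ (using \cite{CCK13,CCK14}) and invoking Gaussian anti-concentration to absorb the gap between $c^B_{J^I}$ and $c^B_{J^O}$ bounds the upper tail by $\alpha - 2\beta_n + O(n^{-c})$; adding $\Pr(E_n^c) \le \beta_n + O(n^{-c})$ gives $\Pr(T > c^{B,2S}(\alpha)) \le \alpha + O(n^{-c})$.

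For the lower bound when $\mu_j = 0$ for all $j$, both $J^I$ and $J^O$ coincide with $\{1,\ldots,p\}$, so on $E_n$ the sandwich forces $\hat{J}_B = \{1,\ldots,p\}$ and therefore $c^{B,2S}(\alpha) = c^B(\alpha - 2\beta_n)$. Applying the equality assertion of Theorem \ref{thm: simulation plugin method} at level $\alpha - 2\beta_n$ gives $\Pr(T > c^B(\alpha - 2\beta_n)) \ge \alpha - 2\beta_n - Cn^{-c}$, and subtracting $\Pr(E_n^c) \le \beta_n + Cn^{-c}$ yields the claimed $\alpha - 3\beta_n - Cn^{-c}$. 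The two-sided statement under the additional hypothesis $\beta_n \le C_1 n^{-c_1}$ is then immediate by absorbing $3\beta_n$ into $Cn^{-c}$. Uniformity over distributions satisfying (\ref{eq: variance cond}) and (\ref{eq: cond3}) is preserved because every quantitative input (the high-dimensional CLT and bootstrap approximations of \cite{CCK13,CCK14}, Gaussian anti-concentration, and variance concentration) has constants depending only on $c_1, C_1$.

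The main obstacle is the upper-bound argument's handling of the buffer region $J^O \setminus J^I$: although the sandwich $J^I \subseteq \hat{J}_B \subseteq J^O$ already gives two-sided control on $c^{B,2S}(\alpha)$, closing the gap between $c^B_{J^I}$ and $c^B_{J^O}$ requires exploiting that the $\mu_j$ inside the buffer are sufficiently negative that the corresponding $T_j$ fall short of the rejection threshold by a margin of order $\ell_n$. The factor-of-two gap between the selection threshold $-2\ell_n$ and the inner threshold $-\ell_n/2$ creates exactly this margin, and the assumption $\log(1/\beta_n) \le C_1 \log n$ keeps $\ell_n$ of order $\sqrt{\log(pn)}$ so that the quantitative Gaussian and bootstrap approximations of \cite{CCK13,CCK14} retain $O(n^{-c})$ accuracy at every step.
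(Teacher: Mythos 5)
Your lower-bound argument (all $\mu_j=0$) and the uniformity discussion are fine and essentially match the paper. The genuine gap is in the upper-bound step, specifically in how you handle the buffer $J^O\setminus J^I$. Indices $j$ with $\sqrt{n}\mu_j/\sigma_j\in[-\ell_n,-\ell_n/2)$ lie outside your $J^I$, yet on $E_n$ their Studentized sample means can be as large as roughly $\ell_n/2>0$; the maximum of the centered statistics over the buffer concentrates near $\sqrt{2\log|J^O\setminus J^I|}$, so after the downward shift of $\ell_n/3$ the buffer contribution to $T$ is still of order $\sqrt{2\log p}-\Theta(\ell_n)$. Your only lower bound on the critical value is $c^{B}_{J^I}(\alpha-2\beta_n)\approx\sqrt{2\log|J^I|}$, which can be $O(1)$ when $|J^I|$ is small while the buffer has nearly $p$ elements; in that configuration the buffer maximum exceeds $c^{B}_{J^I}(\alpha-2\beta_n)$ with probability tending to one, so the comparison does not close. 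Nor can anti-concentration ``absorb the gap between $c^{B}_{J^I}$ and $c^{B}_{J^O}$'': that gap is of order $\sqrt{\log p}$, whereas the anti-concentration bound of Lemma \ref{lem: critical value bound}(ii) only tolerates perturbations of order $o(\log^{-1/2}p)$. (The actual test is still valid here because the selected buffer indices inflate $c^{B,2S}(\alpha)$ itself, but your proof never exploits this, since the selected set is random.)

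The paper avoids the buffer entirely by taking a single, larger inner set $J_2=\{j:\sqrt{n}\mu_j/\sigma_j>-c_0(\beta_n+\varphi_n)\}$ — a \emph{full} (deterministic, Gaussian) critical value rather than half of the (random) bootstrap one. Then (i) every $j\in J_2^c$ satisfies $\hat{\mu}_j\le 0$ with probability at least $1-\beta_n-Cn^{-c}$ by a one-sided deviation bound, hence cannot produce $T>c^{B,2S}(\alpha)\ge 0$, so no outer set is needed; (ii) $\hat{J}_B\supset J_2$ with probability at least $1-\beta_n-Cn^{-c}$, so $c^{B,2S}(\alpha)\ge c^{B,2S}(\alpha,J_2)$ by monotonicity; and (iii) the one-step result applied to the fixed set $J_2$ at level $\alpha-2\beta_n$, together with $\hat{\mu}_j\le\hat{\mu}_j-\mu_j$ under $H_0$, finishes the bound. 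To repair your argument you should move the inner threshold from $-\ell_n/2$ to (one times) a deterministic Gaussian quantile and drop $J^O$. Two smaller points: your $J^I,J^O$ are not deterministic since $\ell_n=c^B(\beta_n)$ is a conditional quantile of the data; and the crude bound $\max_j|\hat{\sigma}_j/\sigma_j-1|\le 1/2$ is too loose even for your inner inclusion ($\sigma_j/\hat{\sigma}_j\le 2$ only yields $\sqrt{n}\hat{\mu}_j/\hat{\sigma}_j\ge -3\ell_n$, not $>-2\ell_n$) — you need the $o(1)$ rate of Lemma \ref{lem: variance}.
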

\begin{remark}
The selection procedure used in the theorem above is most closely related to those in \cite{ChernozhukovLeeRosen2013} and in \cite{Chetverikov2011}. Other selection procedures were suggested in the literature in the framework when $p$ is fixed. Specifically, \cite{RomanoShaikhWolf2012} derived an inequality selection method based on the construction of rectangular confidence sets for the vector $(\mu_1,\dots,\mu_p)^T$. To extend their method to high dimensional setting considered here, note that by (\ref{eq: simulation plugin control2}), we have that $\mu_j\leq \hat{\mu}_j+\hat{\sigma}_jc^{MB}(\beta_n)/\sqrt{n}$ for all $1 \leq j \leq p$ with probability $1-\beta_n$ asymptotically. Therefore, we can replace (\ref{eq: under null}) with the following probabilistic inequality: under $H_0$,
\[
\Pr\left(T\leq\max_{1\leq j\leq p}\frac{\sqrt{n}(\hat{\mu}_j-\mu_j+\tilde{\mu}_j)}{\hat{\sigma}_j}\right)\geq 1-\beta_n+o(1),
\]
where
\[
\tilde{\mu}_j=\min\left(\hat{\mu}_j+\hat{\sigma}_j c^{MB}(\beta_n)/\sqrt{n},0\right).
\]
This suggests that we could obtain a critical value based on the distribution of the bootstrap test statistic
\[
\hat{W}=\max_{1\leq j\leq p}\frac{\sqrt{n}\En[\epsilon_i(X_{ij}-\hat{\mu}_j)]+\sqrt{n}\tilde{\mu}_j}{\hat{\sigma}_j}.
\]
For brevity, however, we leave analysis of this critical value for future research.
\qed
\end{remark}

\subsection{Hybrid methods}

We have considered the one-step SN, MB, and EB methods and their two-step variants. In fact, we can also consider ``hybrids'' of these methods. For example, we can use the SN method for inequality selection, and apply the MB or EB method for the selected inequalities, which is a computationally more tractable alternative to the two-step MB and EB methods. For convenience of terminology, we will call it the Hybrid (HB) method. To formally define the method, let $0 < \beta_{n} < \alpha/2$ be some constants, and recall the set $\hat{J}_{SN} \subset \{ 1,\dots, p \}$ defined in (\ref{eq: SN set}). Suppose we want to use the MB method on the second step. Then the hybrid MB critical value, $c^{MB,H}(\alpha)$ is defined by the following procedure:

\begin{algorithm}[Multiplier Bootstrap Hybrid method]
\
\begin{enumerate}
\item[1.] Generate independent  standard normal  random variables $\epsilon_1,\dots,\epsilon_n$  independent of the data $X_{1}^{n}$.
\item[2.] Construct the bootstrap test statistic
\[
W_{\hat{J}_{SN}}=
\begin{cases}
\max_{j\in\hat{J}_{SN}}\frac{\sqrt{n}\En[\epsilon_i(X_{ij}-\hat{\mu}_j)]}{\hat{\sigma}_j}, &\text{if }\hat{J}_{SN}\text{ is not empty,}\\
0 &\text{if }\hat{J}_{SN}\text{ is empty}.
\end{cases}
\]
\item[3.]
Calculate $c^{MB,H}(\alpha)$ as
\begin{equation}
c^{MB,H}(\alpha)
=\text{conditional $(1-\alpha + 2\beta_n)$-quantile of $W_{\hat{J}_{SN}}$ given $X_{1}^{n}$}. \label{HB critical value}
\end{equation}
\end{enumerate}
\end{algorithm}
A similar algorithm can be defined for the EB method on the second step, which leads to the hybrid EB critical value $c^{EB,H}(\alpha)$.
The following theorem establishes validity of these critical values.
\begin{theorem}[Validity of hybrid two-step methods]
\label{thm: HB method}
Let $c^{B,H}(\alpha)$ stand either for $c^{MB,H}(\alpha)$ or $c^{EB,H}(\alpha)$. Suppose that there exist constants $0 < c_{1} < 1/2$ and $C_{1} > 0$ such that (\ref{eq: cond3}) is verified. Moreover, suppose that $\log (1/\beta_{n}) \leq C_{1} \log n$. Then all the conclusions of Theorem \ref{thm: simulation MS method} hold with $c^{B,MS}(\alpha)$ replaced by $c^{B,H}(\alpha)$.
\end{theorem}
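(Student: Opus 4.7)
The plan is to mirror the proof of Theorem \ref{thm: simulation MS method}, changing only the inequality-selection step from bootstrap-based ($c^{MB}(\beta_n)$ or $c^{EB}(\beta_n)$) to SN-based ($c^{SN}(\beta_n)$). Under (\ref{eq: cond3}) and $\log(1/\beta_n) \leq C_1 \log n$, the SN critical value is of order $\sqrt{\log(p/\beta_n)}$ and hence comparable, up to a constant, to the bootstrap thresholds. So $\hat{J}_{SN}$ serves as a (conservative) selection set, and size control plus the lower bound follow by the same route as in Theorem \ref{thm: simulation MS method}, once we verify that SN-based selection has the same two properties that drove that proof: (i) with high probability the selected set contains the ``nearly binding'' inequalities, and (ii) indices outside the selected set have no chance of driving $T$.

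For property (i), I would define the deterministic set $J^{*} := \{ j : \sqrt{n}\mu_j/\sigma_j \geq -c^{SN}(\beta_n)\}$ and show that, with probability at least $1-\beta_n - Cn^{-c}$, $\hat{J}_{SN} \supseteq J^{*}$. This is obtained by applying Theorem \ref{thm: analytical plugin method} to $-(X_{ij}-\mu_j)$ to get $\max_j \sqrt{n}(\mu_j - \hat{\mu}_j)/\hat{\sigma}_j \leq c^{SN}(\beta_n)$ with that probability, combined with the standard $\hat{\sigma}_j/\sigma_j \to 1$ control available under (\ref{eq: cond3}) via the bound on $B_n$ (this is already used inside the proof of Theorem \ref{thm: simulation plugin method}). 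On this event, for $j \in J^{*}$, $\sqrt{n}\hat{\mu}_j/\hat{\sigma}_j \geq \sqrt{n}\mu_j/\hat{\sigma}_j - c^{SN}(\beta_n) > -2c^{SN}(\beta_n)$, so $j \in \hat{J}_{SN}$.

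For property (ii), observe that under $H_0$ we have $\sqrt{n}\hat{\mu}_j/\hat{\sigma}_j \leq \sqrt{n}(\hat{\mu}_j - \mu_j)/\hat{\sigma}_j$ for every $j$, and for $j \notin \hat{J}_{SN}$ the definition of $\hat{J}_{SN}$ forces $\sqrt{n}\hat{\mu}_j/\hat{\sigma}_j \leq -2c^{SN}(\beta_n) < 0$, so these indices are irrelevant whenever $T \geq 0$. Therefore, on the good event from Step (i),
\[
T \leq \max_{j \in \hat{J}_{SN}} \sqrt{n}(\hat{\mu}_j - \mu_j)/\hat{\sigma}_j.
\]
The remaining step is to invoke the high-dimensional bootstrap CLT (Theorem \ref{thm: simulation plugin method} and the underlying results of \cite{CCK13}) to show that, on this good event, the conditional $(1-\alpha+2\beta_n)$-quantile of $W_{\hat{J}_{SN}}$ controls the distribution of $\max_{j \in \hat{J}_{SN}} \sqrt{n}(\hat{\mu}_j - \mu_j)/\hat{\sigma}_j$ up to $n^{-c}$, and then to combine this with the $\beta_n$ loss incurred in Step (i). Adding up the errors yields $\Pr(T > c^{B,H}(\alpha)) \leq \alpha + Cn^{-c}$. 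The lower bound when all $\mu_j = 0$ follows exactly as in Theorem \ref{thm: simulation MS method}: the $2\beta_n$ inflation of the bootstrap quantile is more than offset by the selection and coupling losses, giving $\Pr(T > c^{B,H}(\alpha)) \geq \alpha - 3\beta_n - Cn^{-c}$.

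The main obstacle is Step (iii): establishing bootstrap validity uniformly over the random subset $\hat{J}_{SN}$ rather than over the full index set $\{1,\ldots,p\}$. I would handle this by showing that the Kolmogorov-distance bounds from \cite{CCK13} for the max-type statistic depend only on the structural quantities $M_{n,3},M_{n,4},B_n$ and on $\log p$, all of which are unchanged when one restricts to an arbitrary subset of coordinates; equivalently, restricting the max to a subset is the same as taking the max of the full vector after replacing the excluded coordinates by $-\infty$, which leaves condition (\ref{eq: cond3}) intact. Combined with the event $\{\hat{J}_{SN} \supseteq J^{*}\}$ from Step (i), this gives the uniform control needed, and completes the adaptation of the proof of Theorem \ref{thm: simulation MS method}.
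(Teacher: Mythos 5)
Your overall architecture is close to the paper's, and your Step (i) --- the containment $\hat{J}_{SN}\supseteq J^{*}$ with probability at least $1-\beta_n-Cn^{-c}$ --- is exactly Step 2 of the proof of Theorem \ref{thm: analytical rms method} (the paper's deterministic set $J_1$ in (\ref{eq: true SN set}) is your $J^{*}$). The gap is in Steps (ii)--(iii). Your Step (ii) reduces $T$ to a maximum over the \emph{random} set $\hat{J}_{SN}$, and Step (iii) then requires bootstrap validity for a maximum over a data-dependent index set. The fix you propose --- that the Kolmogorov-distance bounds behind Theorem \ref{thm: simulation plugin method} are unchanged under restriction to a subset of coordinates --- only yields validity uniformly over \emph{fixed} subsets; it does not cover a subset selected from the same data, because the selection event $\{j\in\hat{J}_{SN}\}=\{\sqrt{n}\hat\mu_j/\hat\sigma_j>-2c^{SN}(\beta_n)\}$ is positively correlated with the very quantities $\sqrt{n}(\hat\mu_j-\mu_j)/\hat\sigma_j$ being maximized, so the selected maximum is stochastically inflated relative to what the conditional bootstrap law (which treats $\hat{J}_{SN}$ as given) reproduces. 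The containment $\hat{J}_{SN}\supseteq J^{*}$ does not repair this: it bounds both the statistic and the quantile \emph{from below} by the corresponding quantities over $J^{*}$, which is the wrong direction for controlling the rejection probability.

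What is missing is the second high-probability event the paper uses, namely Step 1 of the proof of Theorem \ref{thm: analytical rms method}: with probability at least $1-\beta_n-Cn^{-c}$, $\hat\mu_j\le 0$ for every $j\notin J^{*}$ (a one-sided deviation bound at level $\beta_n$ for $\max_{1\le j\le p}\sqrt{n}(\hat\mu_j-\mu_j)/\sigma_j$, proved via the moderate-deviation inequality and the variance-ratio control). On that event, any index achieving $T>c\ge 0$ must lie in the \emph{deterministic} set $J^{*}$, so under $H_0$ one has $\{T>c^{B,H}(\alpha)\}\subset\{\max_{j\in J^{*}}\sqrt{n}(\hat\mu_j-\mu_j)/\hat\sigma_j>c^{B,H}(\alpha)\}$; combined with $c^{B,H}(\alpha)\ge c^{B,H}(\alpha,J^{*})$ on your containment event, the problem reduces to one-step bootstrap validity on the fixed set $J^{*}$ at level $\alpha-2\beta_n$, with the two events costing $2\beta_n+Cn^{-c}$ --- this is precisely Step 3 of the proof of Theorem \ref{thm: simulation MS method}. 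Your accounting of ``the $\beta_n$ loss incurred in Step (i)'' (a single $\beta_n$) is a symptom of the missing event. The lower-bound claim when all $\mu_j=0$ is fine once this upper-bound machinery is in place.
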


\subsection{Three-step method}
\label{sec: three-step method}
In empirical studies based on moment inequalities, one typically has inequalities of the form
\begin{equation}\label{eq: alternative setting}
\Ep[g_j(\xi,\theta)]\leq 0\quad \text{ for all }j=1,\dots,p,
\end{equation}
where $\xi$ is a vector of random variables from a distribution denoted by $\mathcal L_{\xi}$, $\theta = (\theta_{1},\dots,\theta_{r})^{T}$ is a vector of parameters in $\mathbb{R}^r$, and $g_1,\dots,g_p$ a set of (known) functions. In these studies, inequalities (\ref{eq: null hypothesis})-(\ref{eq: alternative hypothesis}) arise when one tests the null hypothesis $\theta=\theta_0$ against the alternative $\theta\neq \theta_0$ on the i.i.d. data $\xi_1,\dots,\xi_n$ by setting $X_{i j}:=g_j(\xi_i,\theta_0)$ and $\mu_j:=\Ep[X_{1 j}]$. So far in this section, we showed how to increase power of such tests by employing inequality selection procedures that allow the researcher to drop uninformative inequalities, that is inequalities $j$ with $\mu_j<0$ if $\mu_j$ is not too close to 0. In this subsection, we seek to combine these selection procedures with another selection procedure that is suitable for the model (\ref{eq: alternative setting}) and that can substantially increase local power of the test of $\theta=\theta_0$ by dropping {\em weakly informative} inequalities, that is inequalities $j$ with the function $\theta\mapsto\Ep[g_j(\xi,\theta)]$ being flat or nearly flat around $\theta=\theta_0$. When the tested value $\theta_0$ is close to some $\theta$ satisfying (\ref{eq: alternative setting}), such inequalities can only provide a weak signal of violation of the hypothesis $\theta=\theta_0$ in the sense that they have $\mu_j\approx 0$, and so it is useful to drop them. For brevity of the paper, we only consider weakly informative inequality selection based on the MB and EB methods and note that similar results can be obtained for the self-normalized method. Also, we only consider the case when the functions $\theta\mapsto g_j(\xi,\theta)$ are almost surely continuously differentiable, and leave the extension to non-differentiable functions to future work.

We start with preparing necessary notation. Let $\xi_1,\dots,\xi_n$ be a sample of observations from the distribution of $\xi$. Suppose that we are interested in testing the null hypothesis
\begin{align*}
&H_0: \Ep[g_j(\xi,\theta_0)]\leq 0\quad \text{ for all }j=1,\dots,p,
\intertext{against the alternative}
&H_1: \Ep[g_j(\xi,\theta_0)]> 0\quad \text{ for some }j=1,\dots,p,
\end{align*}
where $\theta_0$ is some value of the parameter $\theta$. Define
\begin{align*}
m_j(\xi,\theta)&:=(m_{j 1}(\xi,\theta),\dots,m_{j r}(\xi,\theta))^T\\
&:=(\partial g_j(\xi,\theta)/\partial \theta_{1},\dots,\partial g_{j}(\xi,\theta)/\partial \theta_{r})^{T}
\end{align*}
Further, let $X_{i j}:=g_j(\xi_i,\theta_0)$, $\mu_j:=\Ep[X_{1 j}]$, $\sigma_j:=(\text{Var}(X_{1 j}))^{1/2}$, $V_{i j l}:=m_{j l}(\xi_i,\theta_0)$, $\mu_{j l}^V:=\Ep[V_{1 j l}]$, and $\sigma_{j l}^V:=(\text{Var}(V_{1 j l}))^{1/2}$. 
We assume that
\begin{align}
&\Ep[X_{1j}^2]<\infty, \, \sigma_j>0, \, j=1,\dots,p,\label{eq: conditions for gradient 1} \\
&\Ep[V_{1 j l}^2]<\infty, \, \sigma_{j l}^V>0, \, j=1,\dots,p, \, l=1,\dots, r.\label{eq: conditions for gradient 2}
\end{align}
In addition, let
\[
\hat{\mu}_j=\En[X_{i j}] \text{ and }\hat{\sigma}_j=\left(\En[(X_{i j}-\hat{\mu}_j)^2]\right)^{1/2}
\]
be estimators of $\mu_j$ and $\sigma_j$, respectively, and let
\[
\hat{\mu}_{j l}^V=\En[V_{i j l}] \text{ and }\hat{\sigma}_{j l}^V=\left(\En[(V_{i j l}-\hat{\mu}_{j l}^V)^2]\right)^{1/2}
\]
be estimators of $\mu_{j l}^V$ and $\sigma_{j l}^V$, respectively.

Weakly informative inequality selection that we derive is based on the bootstrap methods similar to those described in Section \ref{sec: critical value}:
\begin{algorithm}[Multiplier bootstrap for gradient statistic]
\
\begin{enumerate}
\item[1.] Generate independent  standard normal random variables $\epsilon_1,\dots,\epsilon_n$  independent of the data $\xi_{1}^{n} = \{ \xi_{1},\dots,\xi_{n} \}$. \\
\item[2.] Construct the multiplier bootstrap gradient statistic
\begin{equation}
W_{MB}^V=\max_{j,l}\frac{\sqrt{n}|\En[\epsilon_i(V_{i j l}-\hat{\mu}_{j l}^V)]|}{\hat{\sigma}_{j l}^V}. \label{eq: MB gradient stat}
\end{equation}
\item[3.]
For $\gamma\in(0,1)$, calculate $c^{MB,V}(\gamma)$ as
\begin{equation}
c^{MB,V}(\gamma)=\text{conditional $(1-\gamma)$-quantile of $W_{MB}^V$ given $\xi_{1}^{n}$}. \label{MB gradient critical value}
\end{equation}
\end{enumerate}
\end{algorithm}
\begin{algorithm}[Empirical bootstrap for gradient statistic]
\
\begin{enumerate}
\item[1.] Generate a bootstrap sample $V_1^*,\dots,V_n^*$ as i.i.d. draws from the empirical distribution of $V_1^n=\{V_1,\dots,V_n\}$. \\
\item[2.] Construct the empirical bootstrap gradient statistic
\begin{equation}
W_{EB}^V=\max_{j,l}\frac{\sqrt{n}|\En[V_{i j l}^*-\hat{\mu}_{j l}^V]|}{\hat{\sigma}_{j l}^V}. \label{eq: EB gradient stat}
\end{equation}
\item[3.]
For $\gamma\in(0,1)$, calculate $c^{EB,V}(\gamma)$ as
\begin{equation}
c^{EB,V}(\gamma)=\text{conditional $(1-\gamma)$-quantile of $W_{EB}^V$ given $\xi_{1}^{n}$}. \label{EB gradient critical value}
\end{equation}
\end{enumerate}
\end{algorithm}
For some strictly positive constants $c_2$ and $C_2$, let $\varphi_n$ be a sequence of constants satisfying $\varphi_n\log n\geq c_2$, and let $\beta_n$ be a sequence of constants satisfying $0<\beta_n<\alpha/4$ and $\log(1/(\beta_n-\varphi_n))\leq C_2\log n$ where $\alpha$ is the nominal level of the test. Define three estimated sets of inequalities:
\begin{align*}
&\hat{J}_B:=\left\{j\in\{1,\dots,p\}:\sqrt{n}\hat{\mu}_j/\hat{\sigma}_j>-2c^B(\beta_n)\right\},\\
&\hat{J}_{B}':=\left\{j\in\{1,\dots,p\}:\sqrt{n}|\hat{\mu}_{j l}^V/\hat{\sigma}_{j l}^V|>3c^{B,V}(\beta_n-\varphi_n)\text{ for some }l=1,\dots,r\right\},\\
&\hat{J}_{B}'':=\left\{j\in\{1,\dots,p\}:\sqrt{n}|\hat{\mu}_{j l}^V/\hat{\sigma}_{j l}^V|> c^{B,V}(\beta_n+\varphi_n)\text{ for some }l=1,\dots,r\right\},
\end{align*}
where $B$ stands either for $MB$ or $EB$. 

Importantly, the weakly informative inequality selection procedure that we derive requires that both the test statistic and the critical value depend on the estimated sets of inequalities. Let $T^{B}$ and $c^{B,3S}(\alpha)$ denote the test statistic and the critical value for $B=MB$ or $EB$ depending on which bootstrap procedure is used. If the set $\hat{J}_{B}'$ is empty, set the test statistic $T^B=0$ and the critical value $c^{B,3S}(\alpha)=0$. Otherwise, define the test statistic
\[
T^B=\max_{j\in \hat{J}_{B}'}\frac{\sqrt{n}\hat{\mu}_j}{\hat{\sigma}_j},
\]
and define the three-step MB/EB critical values, $c^{B,3S}(\alpha)$ for the test by the same bootstrap procedures as those for $c^{B,2S}(\alpha)$ with $\hat{J}_{B}$ replaced by $\hat{J}_B\cap\hat{J}_{B}''$, and also $2\beta_n$ replaced by $4\beta_n$:
$$
c^{B,2S}(\alpha)
=\text{conditional $(1-\alpha + 4\beta_n)$-quantile of $W_{\hat{J}_{B}\cap\hat J_{B}''}$ given $X_{1}^{n}$},
$$
where $W_{\hat J_B\cap \hat J_B''}$ is either the multiplier or the bootstrap test statistic depending on whether $B = MB$ or $EB$. The test rejects $H_0$ if $T^B>c^{B,3S}(\alpha)$.\footnote{In the definition of the bootstrap test statistic $W_{\hat J_B\cap \hat J_B''}$, the set $\hat J_B''$ is different from $\hat J_B'$, which is used in the definition of the test statistic $T^B$. This is because our proof techniques do not allow us to show the validity of the critical values based on $W_{\hat J_B\cap \hat J_B'}$ since $\hat J_B'$ is random. Instead, our approach consists of finding non-random set $J$ such that with large probability, $\hat J_B' \subset J \subset \hat J_B''$, so that $T^B = \max_{j\in\hat J_B'}\sqrt n\mu_j/\hat\sigma_j\leq \max_{j\in J}\sqrt n\hat\mu_j/\hat\sigma_j$ and $W_{\hat J_B\cap \hat J_B''}\geq W_{\hat J_B\cap J}$ and then showing validity of using $W_{\hat J_B\cap J}$ to approximate the distribution of $\max_{j\in J}\sqrt n\hat\mu_j/\hat\sigma_j$.}


To state the main result of this section, we need the following additional notation. Let
\[
Z_{i j l}^V:=(V_{i j l}-\mu_{j l}^V)/\sigma_{j l}^V.
\]
Observe that $\Ep[Z_{i j l}^V]=0$ and $\Ep[(Z_{i j l}^V)^2]=1$. Let
\[
M_{n,k}^V:=\max_{j,l}\left(\Ep[|Z_{1 j l}^V|^k]\right)^{1/k},\, k=3,4,\, B_n^V:=\left(\Ep\Big[\max_{j,l}(Z_{1 j l}^V)^4\Big]\right)^{1/4}.
\]
We have the following theorem:
\begin{theorem}[Validity of three-step MB and EB methods]\label{thm: weak inequalities}
Let $T^B$ and $c^{B,3S}(\alpha)$ stand either for $T^{MB}$ and $c^{MB,3S}(\alpha)$ or for $T^{EB}$ and $c^{EB,3S}(\alpha)$.
Suppose that there exist constants $0<c_1<1/2$ and $C_1>0$ such that
\begin{equation}\label{eq: three step condition 1}
\left(M_{n,3}^{3} \vee M_{n,4}^{2} \vee B_{n}\right)^{2} \log^{7/2} (p n) \leq C_1n^{1/2-c_1}
\end{equation}
and
\begin{equation}\label{eq: three step condition 2}
\left((M_{n,3}^V)^3\vee (M_{n,4}^V)^2\vee B_n^V\right)^{2} \log^{7/2} (p r n) \leq C_1n^{1/2-c_1}.
\end{equation}
Moreover, suppose that $\log(1/(\beta_n-\varphi_n))\leq C_2\log n$ and $\varphi_n\log n\geq c_2$ for some constants $c_2,C_2>0$. Then there exist positive constants $c,C$ depending only on $c_1$, $C_1$, $c_2$, and $C_2$ such that under $H_0$, 
$$
\Pr(T^B>c^{B,3S}(\alpha))\leq \alpha+C n^{-c}.
$$
In addition, the bound holds uniformly over all distributions $\mathcal L_{\xi}$ satisfying (\ref{eq: conditions for gradient 1}), (\ref{eq: conditions for gradient 2}), (\ref{eq: three step condition 1}), and (\ref{eq: three step condition 2}).
\end{theorem}
\begin{remark}[On the choice of $\varphi_n$]
Inspecting the proof of the theorem shows that the result of the theorem remains valid if we replace condition $\varphi_n \log n\geq c_2$ by a weaker condition $\varphi_n\geq C n^{-c}$ for some constants $c,C$ that can be chosen to depend only on $c_1,C_1$. In practice, however, it is difficult to track the dependence of $c,C$ on $c_1,C_1$. Therefore, in the main text we state the result with the condition $\varphi_n \log n\geq c_2$; in simulations reported in Section \ref{sec: monte carlo}, we set $\varphi_n = \beta_n/2$.
\end{remark}



\section{Power}\label{sec: power}
In this section, we discuss power properties of our tests. Consider the same general setup described in the Introduction and assume that (\ref{eq: variance cond}) holds. Let the test statistic $T$ be defined by (\ref{eq: test statistic}). Pick any $\alpha \in (0,1/2)$ and consider the test of the form
\[
T > \hat{c}(\alpha) \Rightarrow \text{reject $H_{0}$},
\]
where $\hat{c}(\alpha)$ is equal to $c^{SN}(\alpha)$, $c^{SN,2S}(\alpha)$, $c^{MB}(\alpha)$, $c^{MB,2S}(\alpha)$, $c^{EB}(\alpha)$, $c^{EB,2S}(\alpha)$, $c^{MB,H}(\alpha)$, or $c^{EB,H}(\alpha)$. We have the following result on the rate of uniform consistency of this test:

\begin{theorem}[Rate of uniform consistency]
\label{thm: rate of uniform consistency}
Suppose there exist constants $0 < c_{1} < 1/2$ and $C_{1} > 0$ such that
\begin{equation}
M_{n,4}^2 \log^{1/2} p \leq C_{1}n^{1/2-c_{1}} \ \text{and} \ \log^{3/2}p \leq C_{1} n. \label{eq: minimax cond}
\end{equation}
In addition, suppose that $\inf_{n \geq 1}(\alpha - 2 \beta_{n}) \geq c_1\alpha$ whenever inequality selection is used. Then there exist constants $c,C>0$ depending only on $\alpha,c_{1},C_{1}$ such that for every $\epsilon \in (0,1)$, whenever
\[
\max_{1 \leq j \leq p} (\mu_{j}/\sigma_{j}) \geq (1+\epsilon + C \log^{-1/2}p) \sqrt{\frac{2 \log (p/\alpha)}{n}},
\]
we have
\[
\Pr (T>\hat{c}(\alpha)) \geq 1-\frac{C}{\epsilon^{2} \log (p/\alpha)} - Cn^{-c}.
\]
Therefore when $p = p_{n} \to \infty$, for any sequence $\epsilon_n$ satisfying $\epsilon_n\to 0$ and $\epsilon_n\sqrt{\log p_n}\to \infty$, as $n \to \infty$, we have (with keeping $\alpha$ fixed)
\begin{equation}
\inf_{\mu\in\mathcal B_n}\Pr_{\mu}(T>\hat{c}(\alpha)) \geq 1 - o(1), \label{eq: asymptotic minimaxity}
\end{equation}
where 
$$
\mathcal B_n = \Big\{\mu = (\mu_1,\dots,\mu_p): \max_{1\leq j\leq p}(\mu_j/\sigma_j)\geq \overline{r}_n = (1+\epsilon_n)\sqrt{2(\log p_n)/n}\Big\}
$$ 
and $\Pr_{\mu}$ denotes the probability measure for the distribution $\mathcal L_X$ having mean $\mu$. Moreover, the above asymptotic result (\ref{eq: asymptotic minimaxity}) holds uniformly with respect to any sequence of distributions $\mathcal L_X$ satisfying (\ref{eq: variance cond}) and (\ref{eq: minimax cond}).
\end{theorem}

\begin{remark}[Discussion of power properties]
This theorem shows that our tests are uniformly consistent against all alternatives excluding those in a small neighborhood of alternatives that are too close to the null. As long as $p = p_n\to\infty$ as $n\to\infty$, the size of this neighborhood is shrinking at a fast rate $\sqrt{(\log p_n)/n}$. This is a fast rate because even when $p$ is fixed, no test can be uniformly consistent against alternatives whose distance from the null converges to zero faster than $\sqrt{1/n}$. In fact, as we show in a working version of the paper,\footnote{arXiv:1312.7614v4.} when $p = p_n\to\infty$, no test can be uniformly consistent against alternatives whose distance from the null converges to zero faster than $\sqrt{(\log p_n)/n}$, and our tests are minimax optimal.  Here, $\sqrt{\log p_n}$ is a small factor representing the cost we have to pay for dealing with a large number of inequalities. 

Further, the theorem indicates that all of our tests have a fast rate of uniform consistency but it does not reveal that the bootstrap tests have better power properties than those of the SN tests. To explain, suppose for example that all inequalities are the same, that is, $X_{1 j_1} = X_{1 j_2}$ for all $j_1,j_2 = 1,\dots,p$ almost surely. In addition, suppose for concreteness that $\sigma = \sigma_1 = \dots = \sigma_p = 1$. Moreover, suppose that $\mu = \mu_1 = \dots = \mu_p$ is strictly positive but converges to zero as $n\to\infty$, that is, $\mu = \mu^n\downarrow 0$. Then the test statistic $T$ is asymptotically equal to a $N(\sqrt n \mu^n,1)$ random variable and, say, both one-step bootstrap critical values converge in probability to $z_{\alpha}$, the $(1 - \alpha)$ quantile of the $N(0,1)$ distribution. Therefore, the bootstrap tests are consistent against all alternatives such that $\sqrt n \mu^n \to\infty$ as $n\to\infty$. On the other hand, the one-step SN critical value is of order $\sqrt{\log p_n}$, as explained in Section \ref{sec: critical value}, and the one-step SN test is only consistent against alternatives such that $\sqrt n\mu^n/\sqrt{\log p_n} \to\infty$. A similar discussion applies to the two-step tests. This explains the difference in power between the SN and the bootstrap tests.
\end{remark}

\begin{remark}[Comparison with methods for conditional moment inequalities]
As discussed in the Introduction, our methods can also be applied when dealing with a large number of (unconditional) moment inequalities that arise from a small number of conditional moment inequalities. Here we explain how our methods compare with those developed specifically for testing conditional moment inequalities. To fix ideas, suppose that we have one conditional moment inequality,
\begin{equation}\label{eq: conditional moment inequality}
\Ep[m(Y,Z) | Z] \leq 0,
\end{equation}
where $Y$ and $Z$ are random vectors and $m$ is a known function. To transform this inequality into unconditional ones, let $w_{z,h}(Z)\geq 0$ be a positive weighting function indexed by the location point $z\in\mathcal Z_n$ and the bandwidth value $h\in\mathcal H_n$, where both $\mathcal Z_n$ and $\mathcal H_n$ are some large but finite sets. Then it follows from \eqref{eq: conditional moment inequality} that
$$
\Ep[m(Y,Z) w_{z, h}(Z)] \leq 0,\quad \text{for all $z\in\mathcal Z_n$ and $h\in\mathcal H_n$}.
$$
If $(Y_i,Z_i)$, $i=1,\dots,n$, is a random sample from the distribution of the pair $(Y,Z)$, our approach would be to consider the test statistic
$$
T = \max_{z\in\mathcal Z_n; h\in\mathcal H_n} \frac{n^{-1/2}\sum_{i=1}^n m(Y_i,Z_i)w_{z,h}(Z_i)}{\hat V_{z,h}^{1/2}},
$$
where $\hat V_{z,h}$ is an estimator of $V_{z,h}$, the variance of $m(Y,Z)w_{z,h}(Z)$. This is the test statistic used in \cite{ArmstrongChan2012}, up to a minor modification that they use infinite sets $\mathcal Z_n$ and $\mathcal H_n$. Since they couple the test statistic $T$ with the $(1-\alpha)$ quantile of the asymptotic distribution of $T$ when $\Ep[m(Y,Z)|Z] = 0$ almost surely, it follows that the power of their test essentially coincides with that of our one-step bootstrap tests, which can be improved by using our two-step and three-step bootstrap tests.

The approach in \cite{Chetverikov2011}, on the other hand, would be to consider the test statistic
$$
T' = \max_{z\in\mathcal Z_n, h\in\mathcal H_n}\frac{n^{-1/2}\sum_{i=1}^n m(Y_i,Z_i)w_{z,h}(Z_i)}{\hat V_{z,h,c}^{1/2}},
$$
where $\hat V_{z,h,c}$ is an estimator of $V_{z,h,c}$, the variance of $\varepsilon w_{z,h}(Z)$, where $\varepsilon = m(Y,Z) - \Ep[m(Y,Z)|Z]$. Since
\begin{align*}
V_{z,h}
&=\Ep[m(Y,Z)^2w_{z,h}(Z)^2] - \Ep[m(Y,Z)w_{z,h}(Z)]^2\\
&=\Ep[(\Ep[m(Y,Z)|Z] + \varepsilon)^2 w_{z,h}(Z)^2] - \Ep[\Ep[m(Y,Z)|Z]w_{z,h}(Z)]^2\\
&=\text{Var}(\Ep[m(Y,Z)|Z]w_{z,h}(Z)) + V_{z,h,c} \geq V_{z,h,c},
\end{align*}
the same alternatives will lead to larger values of $T'$ than of $T$. It is therefore expected that the tests in \cite{Chetverikov2011} would typically have better power properties than those of the tests developed in our paper.\footnote{The precise comparison here is difficult. Indeed, consider for example the one-step bootstrap critical values developed here and in \cite{Chetverikov2011}. In both cases, the critical values are asymptotically equal to the $(1-\alpha)$ quantile of the maxima of $N(0,1)$ random variables, and are expected to be similar. On the other hand, the correlation structure of the $N(0,1)$ random variables in our paper and in \cite{Chetverikov2011} are different, and so it may be possible that our tests sometimes perform better than those in \cite{Chetverikov2011}.}

Further, it is argued in \cite{ArmstrongChan2012} that their test typically has better power properties than those of the test in \cite{AndrewsShi2013}, and so, given that our methods perform at least as good as the Armstrong-Chan test, we expect that our methods also should often have better power than those in \cite{AndrewsShi2013}, although neither approach dominates the other one. Moreover, it is important to emphasize that the Andrews-Shi test requires somewhat weaker regularity (in particular, moment) conditions than those used in our paper. Further comparisons of different methods, including those in \cite{ChernozhukovLeeRosen2013} and in \cite{LeeSongWhang13, LeeSongWhang13b} can be found in \cite{Chetverikov2011}.


To conclude this comparison, we emphasize that our methods are meant to complement those in the literature on testing conditional moment inequalities since our methods can be used to deal with a large number of (unconditional) moment inequalities that do not arise from the small number of conditional moment inequalities.
\end{remark}

\section{Monte Carlo Experiments}\label{sec: monte carlo}
In this section, we provide results of a Monte Carlo simulation study. The simulation study consists of three parts. The first part demonstrates that the methods developed in this paper have good size control and power properties and also demonstrates power advantages of using bootstrap and multi-step procedures over self-normalized and one-step procedures in a broad variety of abstract settings. These abstract settings are useful because they allow us to vary the key parameters of the data-generating process in a straightforward fashion and see how the performance of our methods depend on these parameters. Importantly, this part of the simulation study shows that the size control is achieved even though we use setups with a large number of moment inequalities. The second part sheds some light on the choice of the tuning parameters for our two- and three-step methods. The third part applies our methods in an example based on the market structure model of \cite{CilibertoTamer2009}.

\subsection{Size and power in abstract settings}
Throughout all the experiments in this subsection, we consider i.i.d. samples of size $n=400$. Depending on the experiment, the number of moment inequalities is $p=200$, $500$, or $1000$. Thus, we consider models where the number of moment inequalities $p$ is comparable, larger, or substantially larger than the sample size $n$. 

All the experiments are based on the following data-generating process:
$$
X_{i j} = \theta(1\{j\leq \gamma_1 p\} + \varepsilon_{i j}) - b1\{\gamma_2 p < j \leq p\} + \varepsilon_{i j}.
$$
Here, $\theta$ is a scalar parameter of interest, $(\gamma_1,\gamma_2,b)$ is a triple of additional parameters governing the data-generating process, and $\varepsilon_i = (\varepsilon_{i1},\dots,\varepsilon_{i p})^T$, $i=1,\dots,n$, is a sequence of i.i.d. random vectors in $\mathbb R^p$. We always set $\gamma_1 = 5\%$ and $\gamma_2 = 10\%$ but we vary $b$ and the distribution of $\varepsilon_i$'s depending on the experimental design.

We consider 8 different experimental designs. In all designs, we assume that for all $i=1,\dots,n$, we have $\varepsilon_i = A^T\epsilon_i$, where the vector $\epsilon_i = (\epsilon_{1 i},\dots,\epsilon_{i p})^T$ consists of i.i.d. zero-mean random variables with variance one, so that the covariance matrix of $\varepsilon_i$'s is $\Sigma = A^T A$. In Designs 1, 2, 5, and 6,
$$
\Sigma_{j k} = 1\{j  = k\} + \rho 1\{j\neq k\},\quad \text{for all }j,k=1,\dots,p.
$$
In Designs 3, 4, 7, and 8,
$$
\Sigma_{j k} = \rho^{|j - k|},\quad \text{for all }j,k=1,\dots,p.
$$
We set $b = 0$ in Designs 1, 3, 5, and 7, and $b=0.8$ in Designs 2, 4, 6, and 8. For each experimental design, we consider $\rho = 0$, $0.5$, and $0.9$, and we generate $\epsilon_{i j}$'s either from Student's $t$ distribution, which we normalize to have variance one, or from the uniform on $[-a,a]$ distribution, where we set $a = \sqrt 3$, so that this distribution also has variance one. In the tables, where the results are presented, we write $\mathcal L(\epsilon) = T$ or $\mathcal L(\epsilon) = U$, depending on whether $\epsilon_{i j}$'s are simulated from Student's $t$ or from the uniform distribution.

Observe that for our data-generating process, 
$$
\mu_j = \Ep[X_{1 j}] = \theta 1\{j\leq \gamma_1 p\} - b\{\gamma_2 p < j \leq p\},\quad\text{for all $j=1,\dots,p$},
$$
so that the null hypothesis \eqref{eq: null hypothesis} holds if and only if $\theta \leq 0$ since we always set $b\geq0$. We therefore consider testing \eqref{eq: null hypothesis} against \eqref{eq: alternative hypothesis} for $\theta = 0$ (Designs 1-4; the null holds) and $\theta = 0.07$ (Designs 5-8; the null does not hold; the value 0.07 is chosen to make sure that most probabilities are bounded away from 0 and 1). Note also that when we set $\theta = 0.07$, only $\gamma_1 = 5\%$ of the inequalities violate the null hypothesis. Moreover, when we set $b = 0.8$, $1 - \gamma_2 = 90\%$ of inequalities satisfy the null and are not binding.

We consider self-normalized (SN), multiplier bootstrap (MB), and empirical bootstrap (EB) critical values. For all three methods, we consider their one- and two-step versions. For the MB and EB methods, we also consider their three-step versions. In all experiments, we set the nominal level of the test $\alpha=5\%$ and for the tests with the inequality selection, we set $\beta=0.1\%$. For the three-step methods, we set $\varphi = \beta / 2$. We present results based on 1000 simulations for each design, and we use $B=1000$ bootstrap samples for each bootstrap procedure.

In addition, to see if the methods developed specifically for testing conditional moment inequalities can be used in our setting (with ``unstructured'' inequalities), we also consider the Andrews-Shi test (note that their approach consists of first transforming the conditional moment inequalities into many unconditional ones and then testing the unconditional moment inequalities but implementing the second step does not require knowing the original structure of the conditional moment inequalities, which makes it possible to apply their test in our setting).\footnote{The tests of \cite{ArmstrongChan2012} and of \cite{Chetverikov2011} can not be implemented in our setting because they require knowledge of the original structure of the conditional moment inequalities. In particular, the critical value for the Armstrong-Chan test depends on the volume of the support of the conditioning variable and the test statistic for Chetverikov's test depends on certain conditional heteroscedasticity functions.} To implement their test, we use the test statistic $T'$ in \eqref{eq: quadratic sum test statistic}, which corresponds to their CvM statistic, and obtain the critical value via a bootstrap procedure as described in Section 9 of \cite{AndrewsShi2013}, which corresponds to their GMS critical value. We follow all their recommendations regarding the choice of the tuning parameters.


Results on the probabilities of rejecting the null in all the experiments are presented in Tables 1-4 in the online supplement. In these tables, we use $B_j$ for $B \in \{SN, MB, EB\}$ and $j\in\{1,2,3\}$ to denote $j$-step $B$ test. We also use $AS$ to denote the Andrews-Shi test.

The first observation to be taken from these tables is that the MB and EB methods give similar results. The second observation is that although the Andrews-Shi test performs well in many settings, it does not control size in some settings; for example, when $p = 1000$ and $\rho = 0$, the AS test rejects the null with probability around 15\% in Design 1 (Table 1), even though the null holds and the nominal level of the test is 5\%.   Therefore, in what follows, we only discuss and compare our SN and bootstrap (MB and EB) methods. 

Tables 1 and 2 give results for Designs 1-4, where $H_0$ holds, and demonstrate that all of our tests have good size control. The largest over-rejection occurs in Design 3 with autocorrelated data, uniform $\epsilon_{i j}$'s, $p=500$, and $\rho=0$, where the one-step EB test rejects the null with probability 7.7\% against the nominal level $\alpha=5\%$ (Table 2). As expected, the self-normalized tests tend to under-reject $H_0$ but the bootstrap tests take the correlation structure of the data into account, and have rejection probability close to nominal level $\alpha=5\%$ in Designs 1 and 3, where inequalities hold as equalities. The most striking difference between the SN and bootstrap tests in this dimension perhaps can be seen in Design 1 with equicorrelated data, uniform $\epsilon_{i j}$'s, $p=1000$, and $\rho=0.9$ where the MB and EB tests reject the null with probability between 4.8\% and 5.2\%, which is very close to the nominal level $\alpha=5\%$, but both the SN tests never reject the null. Observe also that when the correlation in the data is not too large, the SN tests also have size rather close to the nominal level; see results for Design 3 with autocorrelated data and $\rho=0$ or 0.5.


Tables 3 and 4 give results for Designs 5-8, where $\theta = 0.07$ and $H_0$ does not hold, and demonstrate power properties of our tests. Note that we have for all $j=1,\dots,p$ that $\Var(X_{1 j}) = (1 + \theta)^2 = 1.07^2 = 1.1449$. Hence, if we had only one inequality to test ($p=1$), non-trivial testing would only be possibly for $\mu_1$ at least of order $(1.1449/n)^{1/2} = 1.07/20 = 0.0535$. Instead, we have many inequalities ($p$ is large) but we set $\mu_j = 0.07$ for the inequalities that violate the null, which is of the same order as $0.0535$. Note also that in our setting, only $\gamma_1 = 5\%$ of all inequalities violate the null. Therefore, since Tables 3 and 4 show that our methods yield non-trivial rejection probabilities in most cases and sometimes yield the rejection probability close to one, we conclude that our methods have good power properties. The one-step and two-step SN tests have rejection probabilities close to those for the corresponding bootstrap tests when $\rho=0$ or even when $\rho=0.5$ for Designs 7 and 8 with autocorrelated data. Further, the one-step and two-step bootstrap tests substantially improve upon the corresponding SN tests in cases with large correlation in the data; see, for example, results for Design 5 with equicorrelated data, $\epsilon_{i j}$ having Student's t-distribution, $p=1000$ and $\rho=0.5$, where the SN tests reject $H_0$ with probability around 20\% and the corresponding bootstrap tests reject $H_0$ with probability around 40\%. Finally, selection procedures yield important power improvements. For example, for Design 8 with autocorrelated data, $\epsilon_{i j}$ having Student's t-distribution, $p=1000$ and $\rho=0.5$, the one-step MB method reject the null with probability around 40\% but the two-step method reject with probability around 90\%. Similarly, In Design 7 with autocorrelated data, $\epsilon_{i j}$ having the uniform distribution, $p = 200$ and $\rho = 0$, the two-step EB method rejects the null with probability around 50\% and the three-step EB method rejects with probability around 80\%.

\subsection{Selecting tuning parameters}
In this subsection, we carry out a small simulation study to develop a rule of thumb for selecting the tuning parameters for our methods. Since the bootstrap methods are more powerful than the SN methods, we do not consider the SN methods here. Also, since the MB and EB methods give similar results, we focus on the MB methods only. Thus, in this subsection, we only discuss the two-step and three-step MB methods but note that the same discussion applies to the corresponding EB methods.

We consider the same data-generating process as that in Design 5 in the previous subsection with $\rho = 0$, $\epsilon_{i j}$'s having uniform distribution, and $\theta = 0.07$. Instead of setting $b = 0$, however, we vary $b$ from $0.05$ to $0.8$ to see how it affects the choice of the tuning parameters.  We consider both the two-step and the three-step MB methods with $\alpha = 5\%$ and $\beta$ varying from $0.1\%$ to $1.0\%$. For the three-step MB method, we set $\varphi = \beta / 2$. Depending the simulation, we set $p = 200$ or $1000$. As in the previous subsection, we present results based on 1000 simulations for each setting, and we use $B=1000$ bootstrap samples for each bootstrap procedure. In unreported simulations, we also tried to vary $\rho$ and to use Student's distribution for $\epsilon_{i j}$'s and found results similar to those reported below. Results for the two-step and three-step MB methods are presented in Tables 5 and 6, respectively, in the online supplement.

Before looking at the simulation results, we provide some intuition regarding the choice of the tuning parameters. First, we discuss the two-step MB method, which requires selecting the tuning parameter $\beta$. Observe that increasing $\beta$ has two effects on the power of the method. One effect is that holding $\hat J_{MB}$ fixed, increasing $\beta$ leads to higher values of $c^{MB,2S}(\alpha)$ since $c^{MB,2S}(\alpha)$ is defined as the $(1 - \alpha + 2\beta)$-quantile of the conditional distribution of $W_{\hat J_{MB}}$ given $X_1^n$; see \eqref{MB-MS critical value}. The other effect is that increasing $\beta$ shrinks the set $\hat J_{MB}$, which is defined as the set of all $j$'s such that $\sqrt n\hat\mu_j/\hat\sigma_j > -2c^{MB}(\beta)$. This in turn leads to smaller values of $c^{MB,2S}(\alpha)$. Since the test statistic $T$ does not depend on $\beta$, the first effect decreases the power of the method and the second one increases it. Selecting $\beta$ therefore requires balancing these two effects. 


Further, observe that the second effect is negligible when all inequalities satisfying the null are binding or nearly binding since these inequalities will be in the set $\hat J_{MB}$ even for large values of $\beta$. Similarly, the second effect is negligible when all inequalities satisfying the null are far away from being binding since these inequalities will be out of the set $\hat J_{MB}$ even for small values of $\beta$. Thus, the second effect is non-negligible, so that it might be useful to use large values of $\beta$, only when there are inequalities under the null that are not too close and not too far away from being binding.

Our simulation results support the discussion above. Indeed, as follows from Table 5, for $p = 200$, the power of the two-step MB method is a decreasing function of $\beta$ when $b < 0.40$ and when $b > 0.55$. Therefore, the second effect is strong enough to create a non-monotonicity in the power function only in a small range of the values of $b$. Even in these cases, however, the second effect is not strong enough, so that setting $\beta = 0.1\%$ yields almost the same power as the power we would obtain by selecting $\beta$ optimally. Similar discussion also applies when $p = 1000$. Hence, the simulation results in Table 5 suggest that setting $\beta = 0.1\%$ is a good rule of thumb.\footnote{Note also that it is almost never useful to set $\beta<0.1\%$ since in this case, holding $\hat J_{MB}$ fixed, we would obtain essentially the same critical value $c^{MB,2S}(\alpha)$ as the one given by $\beta = 0.1\%$, but the substantial cost of setting $\beta < 0.1\%$ is that it can significantly increase the set $\hat J_{MB}$, relative to the set we obtain by setting $\beta = 0.1\%$.}

Next, consider the three-step MB method. The problem of selecting the tuning parameters is now much more complicated because we now have to choose two parameters, $\beta$ and $\varphi$. Regarding the choice of $\varphi$, for given value of $\beta$, selecting $\varphi$ exhibits a trade off between good power and size control: choosing larger values of $\varphi$ improves the size control but undermines the power of the test. Since there are no universally accepted rules in the literature on striking the balance between power and size control, and since our results (Theorem \ref{thm: weak inequalities}) require that $\varphi$ is not too close to zero and not too close to $\beta$, we simply set $\varphi = \beta/2$. Regarding the choice of $\beta$, although the situation is now more difficult relative to what we had with the two-step method because now both the test statistic and the critical value depends on $\beta$, the overall trade off is similar to what we had before. In particular, the simulation results in Table 6 reveal that the power of the three-step MB method is always a decreasing function of $\beta$. We therefore, again, conclude that setting $\beta = 0.1\%$ is a good rule of thumb.

\subsection{An application to market structure model}\label{sec: monte carlo market structure}
In this subsection, we show how our methods apply in an economic model setting. Specifically, we consider the market structure model from Section \ref{sec: motivating examples}. For a given market, three firms ($m = 3$) are simultaneously deciding whether to enter the market or not. For $j=1,\dots,3$, let $D_j = 1$ if the firm $j$ enters the market and $D_j = 0$ otherwise. If the firm $j$ enters the market, its profit is given by
$$
\pi_j = \sum_{l\neq j}\delta_{l j} D_l + \varepsilon + \zeta_j,
$$
where $\varepsilon$ is the market size shock that is common to all three firms, and $\zeta_j$ is an idiosyncratic shock representing specific conditions of the firm $j$ in the market. If the firm $j$ does not enter the market, $\pi_j = 0$. The objective of each firm is to maximize its profit given the decisions of other firms. 

We assume that $\varepsilon$, $\zeta_1$, $\zeta_2$, and $\zeta_3$ are i.i.d. standard normal random variables. The parameter $\delta_{l j}$ represents the effect of the presence of the firm $l$ in the market on the firm $j$. To simplify the setting, we assume that $\delta_{l j} = \delta_{j l}$ for all $j, l = 1,\dots,3$ with $j\neq l$, so that the firms have symmetric effects on each other. With this assumption, we use the following reparameterization of the model:
$$
\theta_1 = \delta_{1 2}, \quad \theta_2 = \delta_{1 3}, \quad \theta_3 = \delta_{2 3}.
$$
The random variables $\varepsilon$, $\zeta_1$, $\zeta_2$, and $\zeta_3$ are observed by the firms when they make their decisions but are not observed by the researcher. For simplicity, we also assume away any variation $X$ that is observed by the researcher.

We assume that the parameters $\theta_1$, $\theta_2$, and $\theta_3$ are all negative, so that the game always has a Nash equilibrium in pure strategies, and we focus on such equilibria. When there is only one equilibrium, we assume that the outcome of the game $D = (D_1,D_2,D_3)$ is determined by this equilibrium. When there are several equilibria, we assume that the outcome is determined by a randomly selected equilibrium, where all equilibria have the same probability of being chosen.

We consider inference on the parameters $\theta_1$, $\theta_2$, and $\theta_3$ using the data on market outcomes for $n$ i.i.d. markets. If the researcher knew that the outcome of the game were determined by a randomly selected equilibrium whenever there are several equilibria, the model would be point identified, and there would be only one value of the parameters consistent with the distribution of the outcomes. However, since the researcher typically has no reasons to believe that a particular equilibrium selection mechanism is used, we consider inference approaches from the literature on partial identification, which are agnostic about the equilibrium selection mechanism. 

Specifically, we consider two types of bounds: the \cite{CilibertoTamer2009} bounds and the \cite{GH11} bounds. The Ciliberto-Tamer (CT) bounds, which are described in Section \ref{sec: motivating examples}, give $2\cdot 2^m = 2\cdot 2^3 = 16$ inequalities:
\begin{equation}\label{eq: CT bounds}
P_1(d,\theta) \leq \Ep[1\{D = d\}] \leq P_2(d,\theta),\text{ for all }d\in \mathcal{D},
\end{equation}
where $\mathcal D = \{0,1\}^m = \{0,1\}^3$ is the set of all possible outcomes, $P_1(d,\theta)$ is the probability that the outcome $d$ is the unique equilibrium of the game, and $P_2(d,\theta)$ is the probability that the outcome $d$ is an equilibrium of the game. Since the probabilities $P_1(d,\theta)$ and $P_2(d,\theta)$ are hard to calculate exactly, we approximate them numerically using 100000 simulations of the game.

To describe the Galichon-Henry (GH) bounds, for each set of outcomes $A\subset\mathcal D$, let $\mathcal L(A,\theta)$ be the probability of observing an outcome in $A$ under the assumption that whenever the game has several equilibria, some of which are in $A$ and others are not, an equilibrium from $A$ is selected. Then the GH bounds give inequalities
\begin{equation}\label{eq: GH bounds}
\Ep[1\{D \in A\}] \leq \mathcal L(A,\theta),\text{ for all }A\subset \mathcal{D}.
\end{equation}
Thus, for each set $A$, we get one inequality, and so in total we obtain $2^{|\mathcal D|} = 2^{2^3} = 2^8 = 256$ inequalities. Note, however, that when $A = \emptyset$, the empty set, or $A = \mathcal D$, we obtain inequalities that always hold, and so we can disregard them. Thus, we have $256 - 2 = 254$ inequalities.

The major advantage of the GH bounds is that they are tight and yield the sharp identified set for $\theta = (\theta_1,\theta_2,\theta_3)$, that is, it is never possible, without further assumptions, to find a value of $\theta$ that would satisfy the inequalities \eqref{eq: GH bounds} but would be inconsistent with the distribution of the outcomes of the game.  The CT bounds do not necessarily have this property, and it may be possible to find a value of $\theta$ that would satisfy \eqref{eq: CT bounds} but would not satisfy \eqref{eq: GH bounds}. On the other hand, even though the GH bounds are useful for {\em the identification analysis}, since they produce a lot of inequalities even in simple models (254 in our case, which is a large number, and our game has only three firms), it was previously not possible to use them for {\em inference} on $\theta$. This is, however, possible using our methods. We are therefore interested to see, via simulations, how the GH bounds work for inference and also to compare the inference based on the GH bounds with that based on the CT bounds.  

For our simulations, we consider samples of size $n = 1000$, $2000$, and $5000$, which are comparable with the sample size in \cite{CilibertoTamer2009}, $n = 2742$. We always set $\theta_1 = \theta_2 = -0.6$ and $\theta_3 = -1.3$, and we consider testing the null hypothesis $H_0: \theta = \theta_0$ for different values of $\Delta\theta = \theta_0 - \theta$. To investigate size control of our methods, we use $\Delta\theta = (0,0,0)$, and to investigate their power, we use $\Delta\theta = (0.25,0,0)$, $(-0.25,0,0)$, $(0,0.25,0)$, $(0,-0.25,0)$, $(0,0,0.25)$, and $(0,0,-0.25)$. We consider the one-step and two-step versions of the SN, MB, and EB methods. In addition, we consider the three-step versions of the MB and EB methods. Note, however, that the market structure model studied here violates the conditions required for our three-step methods. In particular, we require in Section \ref{sec: three-step method} that the gradients (with respect to the parameters) of the moment functions have non-vanishing variance, $\sigma_{j l}^V > 0$, but the corresponding gradients here are non-stochastic and so have variance zero. Therefore, as a way to drop weakly informative inequalities in the three-step methods, we drop all the inequalities that have $|\mu_{ j l}^V| \leq 1/\sqrt n$ for all $l = 1,2,3$ in the notation of Section \ref{sec: three-step method}. We tried replacing $1/\sqrt n$ by $0.5/\sqrt n$ and $2/\sqrt n$ but obtained similar results. For all methods, we set $\alpha = 5\%$ and whenever needed, $\beta = 0.1\%$. For all bootstrap methods, we use 500 bootstrap samples, and for each simulation design, we repeat the experiment 1000 times to obtain rejection probabilities. The results of our simulation study are presented in Table 7 in the online supplement.

Table 7 shows that all of our methods have good size control. In particular, when $\Delta\theta = (0,0,0)$, the rejection probabilities do not exceed $3.8\%$. Also, the GH bounds give somewhat more conservative results in comparison with the CT bounds. Regarding the power, it is important to note that since the market structure model is partially identified, our methods have relatively low power against some alternatives (for example, $\Delta\theta = (0,0,-0.25)$) even when $n = 5000$ (no methods may have power against $\theta_0$ in the sharp identified set). The MB and EB methods give similar results, and the bootstrap methods are more powerful than the SN methods, especially in the case of the GH bounds; for example, when $\Delta\theta = (0,0.25,0)$ and $n = 5000$, the two-step MB method based on the GH bounds rejects the null with probability $53\%$ whereas the corresponding two-step SN method rejects the null with probability $36\%$. Three-step methods give results similar to those for the two-step methods. 

Further, it is intuitively clear that in comparison with the CT bounds, the GH bounds may be much more powerful against those $\theta_0$ that satisfy or nearly satisfy \eqref{eq: CT bounds} but do not satisfy \eqref{eq: GH bounds}. This can be seen for $\Delta\theta = (-0.25,0,0)$ and $n=5000$, where the two-step MB method based on the GH bounds rejects the null with probability $99\%$ but the same method based on the CT bounds rejects the null with probability only $70\%$ (in fact, as was reported in the previous version of the paper, when we set $\Delta\theta = (-0.2,0,0)$ and $n = 5000$, the two-step MB method rejects the null with probability $87\%$ when the GH bounds are used and only $18\%$ when the CT bounds are used). This is an important advantage of the GH bounds. On the other hand, whenever $\theta_0$ does not satisfy \eqref{eq: CT bounds}, the methods based on the CT bounds may be more powerful because they use a smaller set of inequalities, and the critical values for our methods are increasing with the number of moment inequalities used. However, the simulation results reveal that the methods based on the GH bounds, even though sometimes less powerful, are always comparable with those based on the CT bounds. When the two-step MB method is used, perhaps the largest difference in power occurs for $\Delta\theta = (0,-0.25,0)$ and $n = 5000$, where the CT and GH bounds yield the rejection probabilities $48\%$ and $34\%$, respectively.


\newpage

\begin{center}
\textbf{{\Large Online Supplement to ``Testing many moment inequalities''}}\\
\text{}\\
\end{center}

\begin{center}
{by V. Chernozhukov, D. Chetverikov, and K. Kato}\\
\text{}\\
\textit{MIT, UCLA, and Cornell University}\\
\text{}\\
\end{center}

\appendix

\section{Honest confidence regions for identifiable parameters in partially identified models}
\label{sec: confidence region}
In this section,
we consider the problem of constructing confidence regions for identifiable parameters in partially identified models defined by moment inequalities. Let $\xi_{1},\dots,\xi_{n}$ be i.i.d. random variables taking values in a measurable space $(S,\mathcal{S})$ with common distribution $P$; let $\Theta$ be a parameter space which is a Borel measurable subset of a metric space (usually a Euclidean space), and let
$g: S \times \Theta \to \R^{p}, \ (\xi, \theta) \mapsto g(\xi,\theta) = (g_{1}(\xi,\theta),\dots,g_{p}(\xi,\theta))^{T}$,
 be a jointly Borel measurable map. We consider the partially identified model where the identified set $\Theta_{0}(P)$ is given by
\[
\Theta_{0}(P) = \{ \theta \in \Theta : \Ep_{P}[ g_{j}(\xi_{1},\theta) ] \leq 0 \text{ for all} \ j=1,\dots,p \}.
\]
Here $\Ep_{P}$ means that the expectation is taken with respect to $P$ (similarly $\Pr_{P}$ means that the probability is taken with respect to $P$).  We consider the problem of constructing confidence regions $\mathcal{C}_{n}(\alpha) = \mathcal{C}_{n}(\alpha; \xi_{1},\dots,\xi_{n}) \subset \Theta$ such that for some constant $c,C > 0$, for all $n \geq 1$,
\begin{equation}
\inf_{P \in \mathcal{P}_{n}} \inf_{\theta \in \Theta_{0}(P)} \Pr_{P} ( \theta \in \mathcal{C}_{n} (\alpha)) \geq 1-\alpha - Cn^{-c}, \label{eq: honest CR}
\end{equation}
while allowing for $p > n$ (indeed we allow $p$ to be much larger than $n$), where $0 < \alpha < 1/2$ and $\mathcal{P}_{n}$ is a suitable sequence of classes  of distributions on $(S,\mathcal{S})$.
We call confidence regions $\mathcal{C}_{n}(\alpha)$ for which (\ref{eq: honest CR}) is verified {\em asymptotically honest to $\mathcal{P}_{n}$ with a polynomial rate}, where the term is inspired by \cite{Li89} and \cite{CCK13b}.

We first state the required restriction on the class of distributions $\mathcal{P}_{n}$. We assume that for every $P \in \mathcal{P}_{n}$,
\begin{align}
\begin{split}
\Theta_{0} (P) \neq \emptyset, \ \text{and} \ &\Ep_{P}[g_{j}^{2}(\xi_{1},\theta)] < \infty, \ \sigma_{j}^{2}(\theta,P) := \Var_{P}(g_{j}(\xi_{1},\theta)) > 0, \\
&\quad  \text{ for all } j=1,\dots,p, \text{ and all } \theta \in \Theta_{0}(P). \label{eq: variance cond2}
\end{split}
\end{align}

We construct confidence regions based upon duality between hypothesis testing and construction of confidence regions. For any given $\theta \in \Theta$, consider the statistic
$T(\theta) = \max_{1 \leq j \leq p} \sqrt{n} \hat{\mu}_{j}(\theta)/ \hat{\sigma}_{j}(\theta)$,
where $\hat{\mu}_{j}(\theta) = \En [g_{j}(\xi_{i},\theta)], \ \hat{\sigma}^{2}_{j}(\theta) = \En [(g_{j}(\xi_{i},\theta) - \hat{\mu}_{j}(\theta))^{2}]$.
This statistic is a test statistic for the problem of testing 
\begin{align*}
&H_{\theta}: \mu_{j}(\theta,P) \leq 0, \text{ for all }j=1,\dots,p,
\intertext{against the alternative}
&H_{\theta}': \mu_{j}(\theta,P) > 0, \text{ for some }j=1,\dots,p,
\end{align*}
where $\mu_{j}(\theta,P) := \Ep_{P}[g_{j}(\xi_{1},\theta)]$.
Pick any $\alpha \in (0,1/2)$. We consider the confidence region of the form
\begin{equation}
\mathcal{C}_{n} (\alpha) = \{ \theta \in \Theta : T (\theta) \leq c(\alpha,\theta) \}, \label{eq: confidence region}
\end{equation}
where $c(\alpha,\theta)$ is a critical value such that $\mathcal{C}_{n}(\alpha)$ contains $\theta$ with probability (approximately) at least $1-\alpha$ whenever $\theta \in \Theta_{0}(P)$.

Recall $c^{SN}(\alpha)$ defined in (\ref{SN critical value}), and let
$c^{SN,2S}(\alpha,\theta)$, $c^{MB}(\alpha,\theta)$, $c^{MB,2S}(\alpha,\theta)$, $c^{EB}(\alpha,\theta)$, $c^{EB,2S}(\alpha,\theta)$, $c^{MB,H}(\alpha,\theta)$, and $c^{EB,H}(\alpha,\theta)$
be the two-step SN, one-step MB, two-step MB, one-step EB, two-step EB, MB hybrid, and EB hybrid critical values defined in Section \ref{sec: critical value} with $X_{i} =(X_{i1},\dots,X_{ip})^{T}$ replaced by $g(\xi_{i},\theta)=(g_{1}(\xi_{i},\theta),\dots,g_{p}(\xi_{i},\theta))^{T}$. Moreover, let $\mathcal{C}_{n}^{SN}(\alpha)$ be the confidence region (\ref{eq: confidence region}) with $c(\alpha,\theta)=c^{SN}(\alpha)$; define
\[
\mathcal{C}_{n}^{SN,2S}(\alpha), \mathcal{C}_{n}^{MB}(\alpha), \mathcal{C}_{n}^{MB,2S}(\alpha),  \mathcal{C}_{n}^{EB}(\alpha), \mathcal{C}_{n}^{EB,2S}(\alpha), \mathcal{C}_{n}^{MB,H}(\alpha), \mathcal{C}_{n}^{EB,H}(\alpha)
\]
analogously. Finally, define
\begin{align*}
M_{n,k}(\theta,P) &:= \max_{1 \leq j \leq p} (\Ep_{P}[|(g_{j}(\xi_{1},\theta)-\mu_{j}(\theta,P))/\sigma_{j}(\theta,P)|^{k}])^{1/k}, \ k=3,4, \\
B_{n}(\theta,P) &:= \left (\Ep_{P}\left[ \max_{1 \leq j \leq p}|(g_{j}(\xi_{1},\theta)-\mu_{j}(\theta,P))/\sigma_{j}(\theta,P)|^{4} \right ] \right)^{1/4}.
\end{align*}
Let $0 < c_{1} < 1/2,C_{1} > 0$ be given constants.
The following theorem is the main result of this section.

\begin{theorem}
\label{thm: confidence region}
Let $\mathcal{P}_{n}^{SN}$ be the class of distributions $P$ on $(S,\mathcal{S})$ for which (\ref{eq: variance cond2}) and (\ref{eq: cond1}) are verified with $M_{n,3}$ replaced by $M_{n,3}(\theta,P)$ for all $\theta \in \Theta_{0}(P)$; let $\mathcal{P}_{n}^{SN,2S}$ be the class of distributions $P$ on $(S,\mathcal{S})$ for which (\ref{eq: variance cond2}) and (\ref{eq: cond2}) are verified with $M_{n,3},B_{n}$ replaced by (respectively) $M_{n,3}(\theta,P), B_{n}(\theta,P)$ for all $\theta \in \Theta_{0}(P)$; and let $\mathcal{P}_{n}^{B}$ be the class of distributions $P$ on $(S,\mathcal{S})$ for which (\ref{eq: variance cond2}) and (\ref{eq: cond3}) are verified with $M_{n,k},B_{n}$ replaced by (respectively) $M_{n,k}(\theta,P), B_{n}(\theta,P)$ for all $\theta \in \Theta_{0}(P)$.\footnote{For example, $\mathcal{P}_{n}^{SN}=\{ P: \ \text{(\ref{eq: variance cond2}) is verified, and} \ M_{n,3}^{3} (\theta, P) \log^{3/2} (p/\alpha) \leq C_{1}n^{1/2-c_{1}}, \forall \theta \in \Theta_{0}(P) \}$.}
Moreover, suppose that $\log (1/\beta_{n}) \leq C_{1} \log n$ whenever inequality selection is used.
Then there exist positive constants $c,C$ depending only on $\alpha,c_{1},C_{1}$ such that
\[
\inf_{P \in \mathcal{P}_{n}} \inf_{\theta \in \Theta_{0}(P)} \Pr_{P}(\theta \in \mathcal{C}_{n}(\alpha)) \geq 1-\alpha - C n^{-c}
\]
where  $(\mathcal{P}_{n},\mathcal{C}_{n})$ is one of the pairs $(\mathcal{P}_{n}^{SN}, \mathcal{C}_{n}^{SN})$, $(\mathcal{P}_{n}^{SN,2S}, \mathcal{C}_{n}^{SN,2S})$, $(\mathcal{P}_{n}^{B}, \mathcal{C}_{n}^{MB})$, $(\mathcal{P}_{n}^{B}, \mathcal{C}_{n}^{MB,2S})$, $(\mathcal{P}_{n}^{B}, \mathcal{C}_{n}^{EB})$, $(\mathcal{P}_{n}^{B}, \mathcal{C}_{n}^{EB,2S})$, $(\mathcal{P}_{n}^{B}, \mathcal{C}_{n}^{MB,H})$ or $(\mathcal{P}_{n}^{B}, \mathcal{C}_{n}^{EB,H})$.
\end{theorem}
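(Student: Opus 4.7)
The plan is to exploit the duality between hypothesis testing and confidence set construction: for each fixed $\theta \in \Theta_0(P)$, the event $\{\theta \in \mathcal{C}_n(\alpha)\}$ is by definition the event $\{T(\theta) \leq c(\alpha,\theta)\}$, i.e., the event of \emph{not} rejecting the hypothesis $H_\theta$. So the claim reduces to showing that, for each such $(\theta,P)$, the corresponding test from Section~\ref{sec: critical value} has size at most $\alpha + Cn^{-c}$ with constants $c,C$ depending only on $\alpha,c_1,C_1$, uniformly in $(\theta,P)$ with $P\in\mathcal{P}_n$ and $\theta\in\Theta_0(P)$.

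The reduction is completely mechanical. Fix $P\in\mathcal{P}_n$ and $\theta\in\Theta_0(P)$, and set $X_{ij}:=g_j(\xi_i,\theta)$, $\mu_j:=\mu_j(\theta,P)$, $\sigma_j:=\sigma_j(\theta,P)$. Since $\theta\in\Theta_0(P)$, we have $\mu_j\leq 0$ for all $j$, so the null $H_0$ of Section~\ref{sec: critical value} holds. The statistic $T(\theta)$ is exactly the statistic $T$ of \eqref{eq: test statistic} built from these $X_{ij}$, and each of the critical values $c^{SN}(\alpha)$, $c^{SN,2S}(\alpha,\theta),\dots, c^{EB,H}(\alpha,\theta)$ is, by construction, the corresponding critical value from Section~\ref{sec: critical value} built from the same $X_{ij}$. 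The moment-type conditions \eqref{eq: cond1}, \eqref{eq: cond2}, \eqref{eq: cond3} that appear in Theorems~\ref{thm: analytical plugin method}--\ref{thm: HB method} translate, via the substitution $M_{n,k}\mapsto M_{n,k}(\theta,P)$ and $B_n\mapsto B_n(\theta,P)$, into membership of $P$ in $\mathcal{P}_n^{SN}$, $\mathcal{P}_n^{SN,2S}$, or $\mathcal{P}_n^{B}$, respectively; thus the hypotheses of the relevant theorem are satisfied with the \emph{same} constants $c_1, C_1$ uniformly in $\theta\in\Theta_0(P)$ and $P\in\mathcal{P}_n$.

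Therefore, applying Theorem~\ref{thm: analytical plugin method} (for the pair $(\mathcal{P}_n^{SN},\mathcal{C}_n^{SN})$), Theorem~\ref{thm: analytical rms method} (for $(\mathcal{P}_n^{SN,2S},\mathcal{C}_n^{SN,2S})$), Theorem~\ref{thm: simulation plugin method} (for $(\mathcal{P}_n^{B},\mathcal{C}_n^{MB})$ and $(\mathcal{P}_n^{B},\mathcal{C}_n^{EB})$), Theorem~\ref{thm: simulation MS method} (for the two-step bootstrap pairs), and Theorem~\ref{thm: HB method} (for the hybrid pairs) to the data $\{g(\xi_i,\theta)\}_{i=1}^n$ gives
\[
\Pr_P\bigl(T(\theta) > c(\alpha,\theta)\bigr) \leq \alpha + Cn^{-c},
\]
with $c,C$ depending only on $\alpha,c_1,C_1$. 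Taking complements yields $\Pr_P(\theta\in\mathcal{C}_n(\alpha))\geq 1-\alpha-Cn^{-c}$, and then taking the infimum over $\theta\in\Theta_0(P)$ and over $P\in\mathcal{P}_n$ completes the proof. The only thing worth verifying carefully is the uniformity claim: one must check that the constants produced by the Section~\ref{sec: critical value} theorems genuinely depend only on $(\alpha,c_1,C_1)$ and not on any further features of the underlying distribution of $X_i$ — but this is exactly the form of the uniformity statements already proved there, so no further work is required. In short, this theorem is a direct corollary of the testing theorems combined with the tautology ``$\theta$ is in the confidence set iff the $\theta$-test does not reject.''
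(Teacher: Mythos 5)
Your proposal is correct and is exactly the paper's argument: the paper's own proof is the one-line observation that the theorem follows from Theorems \ref{thm: analytical plugin method}--\ref{thm: HB method}, and your write-up simply makes explicit the duality reduction (fixing $\theta\in\Theta_{0}(P)$, setting $X_{ij}=g_{j}(\xi_{i},\theta)$, noting $H_{0}$ holds and the moment conditions translate into membership in the relevant class) together with the uniformity of the constants already built into those theorems. No gaps.
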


\begin{remark}[Computationally attractive procedure]
In many applications, the parameter $\theta$ is relatively high-dimensional, and it may be computationally difficult to construct a confidence set \eqref{eq: confidence region}. In these cases, an asymptotically honest and computationally attractive procedure would be to first construct a preliminary confidence set in \eqref{eq: confidence region} by using the one-step SN method and then to eliminate the values in the preliminary confidence set that are rejected by a two-step or a three-step bootstrap method. This procedure is computationally attractive because the one-step SN critical value does not depend on $\theta$ and has to be calculated only once, so that constructing the preliminary confidence set is simple, and computationally more intense two-step or three-step bootstrap critical value does not have to be calculated for all values of $\theta \in \Theta$ but only for those in the preliminary confidence set.
\end{remark}

\section{Extensions}
\label{sec: some extensions}

\subsection{Dependent data}
\label{sec: dependent data}

In this section we consider the case where the random vectors $X_1,\dots,X_n$ are dependent. In particular, we assume $\beta$-mixing conditions. To avoid technical complications, we focus here on the non-Studentized version of $T$:
\[
\check{T} = \max_{1 \leq j \leq p} \sqrt{n} \hat{\mu}_{j}.
\]
We consider a version of the multiplier bootstrap, namely the block multiplier bootstrap, to calculate critical values for $\check{T}$, where a certain blocking technique is used to account for dependency among $X_{1},\dots,X_{n}$.\footnote{We refer to \cite{Lahiri03} as a general reference on resampling methods for dependent data.}

Our results in this section complement the set of results in  \cite{ZhangCheng2014} who, independently from us and around the same time, considered the case of the functionally-dependent time series data (the concept of functional dependence was introduced in \cite{Wu2005} and is different from $\beta$-mixing). Both our paper and \cite{ZhangCheng2014}  extend Gaussian approximation and bootstrap results of \cite{CCK12} to the case of dependent data but under different dependence conditions (that do not nest each other). The results obtained in these two papers are strongly complementary and, taken together, cover a wide variety of dependent data processes, thereby considerably expanding the applicability of the proposed tests.

 Let $X_{1},\dots,X_{n}$ be possibly dependent random vectors in $\R^{p}$ with identical distribution (that is, $X_{i} \stackrel{d}{=} X_{1}, \text{ for all }i=1,\dots,n$), defined on the probability space $(\Omega,\mathcal{A},\Pr)$. We follow the basic notation introduced in Section \ref{sec: test statistic}. For the sake of simplicity, assume that there exists a constant $D_n \geq 1$ such that $|X_{ij} - \mu_j | \leq D_n$ a.s. for $1 \leq i \leq n, 1 \leq j \leq p$.

For any integer $1 \leq q \leq n$, define
\begin{align*}
&\overline{\sigma}^{2} (q) := \max_{1 \leq j \leq p} \max_{I} \Var \left (q^{-1/2} \sum_{i \in I} X_{ij} \right), \\
&\underline{\sigma}^{2} (q) := \min_{1 \leq j \leq p} \min_{I} \Var \left (q^{-1/2} \sum_{i \in I} X_{ij} \right),
\end{align*}
where $\max_{I}$ and $\min_{I}$ are taken over all $I \subset \{1,\dots,n\}$ of the form $I=\{ i+1,\dots,i+q \}$. For any sub $\sigma$-fields $\mathcal{A}_{1},\mathcal{A}_{2} \subset \mathcal{A}$, define
\begin{align*}
\beta (\mathcal{A}_{1},\mathcal{A}_{2}) := \frac{1}{2} \sup \Bigg \{ &\sum_{i} \sum_{j}  \Pr (A_{i} \cap B_{j}) - \Pr(A_{i})\Pr(B_{j}) | : \\
&\quad \text{$\{ A_{i} \}$ is any finite partition of $\Omega$ in $\mathcal{A}_{1}$}, \\
&\quad \text{$\{ B_{j} \}$ is any finite  partition of $\Omega$ in $\mathcal{A}_{2}$} \Bigg \}.
\end{align*}
Define the $k$th $\beta$-mixing coefficient for $X_{1}^{n} = \{ X_{1},\dots,X_{n} \}$ by
\[
b_{k} = b_{k} (X_{1}^{n}) = \max_{1 \leq  l \leq n-k} \beta (\sigma (X_{1},\dots,X_{l}),\sigma(X_{l+k},\dots,X_{n})), \ 1 \leq k \leq n-1,
\]
where $\sigma (X_{i}, i \in I)$ with $I \subset \{ 1,\dots, n \}$ is the $\sigma$-field generated by $X_{i}, i \in I$.\footnote{We refer to \cite{FY03}, Section 2.6, as a general reference on mixing.}

We employ Bernstein's ``small-block and large-block'' technique and decompose the sequence $\{1,\dots,n \}$ into ``large'' and ``small'' blocks.
Let $q > r$ be positive integers with $q+r \leq n/2$ ($q,r$ depend on $n$: $q=q_n,r=r_n$, and asymptotically we require $q_n \to \infty, q_n = o(n), r_n \to \infty$, and $r_n = o(q_n)$), and let $I_{1} = \{ 1,\dots,q \}, J_{1} = \{ q+1,\dots,q+r \}, \dots,
I_{m} = \{ (m-1)(q+r) + 1,\dots, (m-1)(q+r) + q \}, J_{m} = \{ (m-1)(q+r) + q+1,\dots,m(q+r) \}, J_{m+1} = \{ m(q+r),\dots, n \}$,
where $m = m_n =  [n/(q+r)]$ (the integer part of $n/(q+r)$). The $q$ and $r$ are the lengths of large and small blocks, respectively, and $m$ is the number of blocks. 

Then the block multiplier bootstrap is described as follows: generate independent standard normal random variables $\epsilon_1,\dots,\epsilon_m$, independent of $X_{1}^{n}$. Let
\[
\check{W} = \max_{1 \leq j \leq p} \frac{1}{\sqrt{mq}} \sum_{l=1}^{m} \epsilon_{l} \sum_{i \in I_{l}} (X_{ij} - \hat{\mu}_{j}),
\]
and consider
\[
\hat{c}^{BMB}(\alpha) =  \text{conditional $(1-\alpha)$-quantile of $\check{W}$ given $X_{1}^{n}$},
\]
which we call the BMB (Block Multiplier Bootstrap) critical value.
\begin{theorem}[Validity of BMB method]
\label{thm: validity of BMB}
Work under the setting described above. Suppose that there exist constants $0 < c_1 \leq C_1$ and $0 < c_2 < 1/4$ such that
$c_{1} \leq \underline{\sigma}^{2}(q) \leq \overline{\sigma}^{2}( r ) \vee \overline{\sigma}^{2}(q)  \leq C_{1}, \max \{ m b_{r}, (r/q) \log^{2} p  \} \leq C_{1} n^{-c_2}$, and $q D_n \log^{5/2} (pn)  \leq C_{1}n^{1/2-c_{2}}$.
Then there exist positive constants $c,C$ depending only on $c_1,c_2,C_1$ such that under $H_0$, 
$$
\Pr (\check{T} > \hat{c}^{BMB}(\alpha)) \leq \alpha + Cn^{-c}.
$$
In addition, if $\mu_j = 0$ for all $1 \leq j \leq p$, then 
$$
| \Pr (\check{T} > \hat{c}^{BMB}(\alpha)) - \alpha | \leq C n^{-c}.
$$
\end{theorem}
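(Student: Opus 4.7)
My plan is to reduce the dependent-data problem to a high-dimensional Gaussian approximation in an approximately independent setting via Bernstein's classical small-block/large-block decomposition combined with Berbee-type coupling under $\beta$-mixing, and then invoke the high-dimensional Gaussian CLT and comparison machinery of \cite{CCK13,CCK14}.

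First, I would decompose
\[
\sqrt{n}(\hat{\mu}_j - \mu_j) = \frac{1}{\sqrt{n}}\sum_{l=1}^m S_{lj} + \frac{1}{\sqrt{n}}\sum_{l=1}^{m+1} T_{lj}, \qquad S_{lj} := \sum_{i \in I_l}(X_{ij}-\mu_j), \quad T_{lj} := \sum_{i \in J_l}(X_{ij}-\mu_j),
\]
and argue that the aggregated small-block contribution is negligible in $\ell_\infty$. Using a Bernstein-type tail bound together with $|X_{ij}-\mu_j|\leq D_n$ and $\overline{\sigma}^2(r)\leq C_1$, $\max_j |n^{-1/2}\sum_l T_{lj}|$ is $O_P((r/q)^{1/2}\log^{1/2}p + D_n\log p/\sqrt{n})$, which is $O_P(n^{-c})$ under $(r/q)\log^2 p \leq C_1 n^{-c_2}$. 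This reduces the analysis of $\check T$ (or of $\sqrt{n}\max_j(\hat\mu_j-\mu_j)$) to that of $\check T' := \max_j (mq)^{-1/2}\sum_l S_{lj}$ up to a polynomially small Kolmogorov error, and an analogous reduction applies to $\check W$.

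Second, I apply Berbee's coupling lemma block by block to produce, on an enlarged probability space, an independent sequence $\{S_l^*\}_{l=1}^m$ with $S_l^*\stackrel{d}{=}S_l$ and $\Pr(\exists\,l: S_l\neq S_l^*)\leq m b_r = O(n^{-c_2})$. To the independent-block sum $(mq)^{-1/2}\sum_l S_l^*$ I then apply the high-dimensional CLT of \cite{CCK14}: each summand is bounded entrywise by $\sqrt{q}D_n$ after the $1/\sqrt{m}$ rescaling, diagonal variances lie in $[c_1,C_1]$, so the Kolmogorov distance between the law of $\check T'$ and that of $\max_j Y_j$, where $Y\sim N(0,\Sigma_n)$ with $\Sigma_n = q^{-1}\Ep[S_l S_l^T]$, is $O(n^{-c})$ thanks to the assumption $q D_n\log^{5/2}(pn)\leq C_1 n^{1/2-c_2}$.

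Third, conditional on $X_1^n$ the bootstrap statistic $\check W$ is exactly the maximum of a centered Gaussian vector with covariance
\[
\hat\Sigma_n := \frac{1}{mq}\sum_{l=1}^m \Bigl(\sum_{i \in I_l}(X_i-\hat\mu)\Bigr)\Bigl(\sum_{i \in I_l}(X_i-\hat\mu)\Bigr)^T.
\]
Using the Berbee coupling together with a Bernstein-type bound on sums of independent bounded vectors, a union bound over $p^2$ entries, and the replacement $\hat\mu\to\mu$ (controlled since $\max_j|\hat\mu_j-\mu_j|=O_P(\sqrt{\log p/n})$), I would show $\|\hat\Sigma_n-\Sigma_n\|_\infty = O_P(n^{-c})$ at a rate that beats any negative power of $\log p$. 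The Gaussian comparison lemma of \cite{CCK13}, combined with Nazarov's anti-concentration inequality (applicable since $\underline{\sigma}^2(q)\geq c_1$), then yields polynomial closeness in Kolmogorov distance between the conditional law of $\check W$ and the law of $\max_j Y_j$. Matching the two Gaussian approximations, the conditional $(1-\alpha)$-quantile of $\check W$ equals the $(1-\alpha)$-quantile of $\check T'$ up to $Cn^{-c}$. Under $H_0$, $\check T \leq \check T' + \sqrt{n}\max_j \mu_j + o_P(1)\leq \check T' + o_P(1)$, delivering $\Pr(\check T>\hat c^{BMB}(\alpha))\leq \alpha+Cn^{-c}$; when all $\mu_j=0$ the inequality becomes an equality and the two-sided bound follows.

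The principal obstacle will be establishing the $\beta$-mixing-robust sup-norm concentration $\|\hat\Sigma_n-\Sigma_n\|_\infty = O_P(n^{-c})$ that survives the $\log p$ union bound despite the effective per-block boundedness constant being $qD_n$ rather than $D_n$; the assumption $qD_n\log^{5/2}(pn)\leq C_1 n^{1/2-c_2}$ is precisely calibrated to this bottleneck. A secondary difficulty is propagating the cumulative Kolmogorov-distance errors from the small-block bound, the Berbee coupling, the CLT, the covariance plug-in, and the $\hat\mu\to\mu$ replacement through the Gaussian anti-concentration inequality so that the final error remains genuinely polynomial in $n$.
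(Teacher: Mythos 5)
Your proposal is correct and follows essentially the same route as the paper's own argument (Theorem \ref{thm: validity of BMB} together with Theorems \ref{thm: high dim CLT under dependence} and \ref{thm: validity of BMB prelim}): big-block/small-block decomposition, coupling of the large blocks to independent copies under $\beta$-mixing, the high-dimensional CLT of \cite{CCK14} for the independent block sums, and Gaussian comparison plus anti-concentration from \cite{CCK13} for the block multiplier bootstrap. The only differences are cosmetic choices of tools: the paper invokes Corollary 2.7 of \cite{Yu1994} rather than Berbee's coupling directly, and controls $\max_{j,k}|\hat{\Sigma}_{n,jk}-\Sigma_{n,jk}|$ via the maximal inequality of Lemma \ref{lem: maximal ineq} and Markov's inequality rather than a Bernstein bound with a union bound over $p^{2}$ entries.
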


\begin{remark}[Connection to tapered block bootstrap]
The BMB method can be considered as a variant of the tapered block bootstrap \citep[see][]{PP01,PP02,Andrews04} applied to non-overlapping blocks with a rectangular tapering function.
The difference is that in the original tapered block bootstrap the multipliers are multinomially distributed, while in the BMB the multipliers are independent standard normal.
\end{remark}

\subsection{Approximate moment inequalities}
\label{sec: approximate moment inequalities}
As shown in a dynamic model of imperfect competition example in Section \ref{sec: dynamic game}, in some applications, random vectors $X_1,\dots,X_n$ satisfying inequalities (\ref{eq: null hypothesis}) with $\mu_j=\Ep[X_{1j}]$ are not observed. Instead, the data consist of random vectors $\hat{X}_1,\dots,\hat{X}_n$ that approximate vectors $X_1,\dots,X_n$. In that example, the approximation error arises from the need to linearize original inequalities. Another possibility leading to a nontrivial approximation error is that where the data contain estimated parameters. In this section, we derive a set of conditions that suffice for the same results as those obtained in Section \ref{sec: critical value} when we use the data $\hat{X}_1,\dots,\hat{X}_n$ as if we were using exact vectors $X_1,\dots,X_n$. For brevity, we only consider two-step MB/EB methods.

We use the following notation. Let $\hat{\mu}_{j,0}:=\En[X_{i j}]$ and $\hat{\sigma}_{j,0}^2:=\En[(X_{i j}-\hat{\mu}_{j,0})^2]$ denote (infeasible) estimators of $\mu_j=\Ep[X_{1j}]$ and $\sigma_j^2=\text{Var}(X_{1j})$. In addition, assume that we have estimates $\hat{\mu}_j$ that appropriately approximate $\hat{\mu}_{j,0}$ for $j=1,\dots,p$. In the context of Section \ref{sec: dynamic game}, for example, these estimates would take the form $\hat{V}_j(s,\sigma_j',\hat\sigma_{-j},\theta)-\hat{V}_j(s,\hat\sigma_j,\hat\sigma_{-j},\theta)$. Moreover, let $\hat{\sigma}_j^2:=\En[(\hat X_{i j}-\hat{\mu}_j)^2]$ be a (feasible) estimator of $\sigma_j^2$.

Define the test statistic $T$ by (\ref{eq: test statistic}); that is, $T=\max_{1\leq j\leq p}\sqrt{n}\hat{\mu}_j/\hat{\sigma}_j$. Define the critical value $c^{B,2S}(\alpha)$ for $B=MB$ or $EB$ by the same algorithms as those used in Section \ref{sec: critical value} with $X_{i j}$ replaced by $\hat{X}_{i j}$ for all $i$ and $j$ (and using $\hat{\mu}_j$ and $\hat{\sigma}_j^2$ as defined in this section). We have the following theorem:

\begin{theorem}[Validity of two-step MB/EB methods for approximate inequalities]\label{thm: approximate moment inequalities}
Let $c^{B,2S}(\alpha)$ stand either for $c^{MB,2S}(\alpha)$ or $c^{EB,2S}(\alpha)$. Suppose that the assumption of Theorem \ref{thm: simulation plugin method} is satisfied. Moreover, suppose that $\log(1/\beta_n)\leq C_1\log n$. In addition, suppose that there exists a sequence $\zeta_{n1}$ satisfying $\zeta_{n1}\log p\leq C_1 n^{-c_1}$ and such that 
\begin{equation}\label{eq: main condition approximate inequalities}
\Pr\left(\max_{1\leq j\leq p}\sqrt{n}|\hat{\mu}_j-\hat{\mu}_{j,0}|>\zeta_{n1}\right)\leq C_1n^{-c_1}
\end{equation}
and 
$$
\Pr\left(\max_{1\leq j\leq p}(\En[(\hat{X}_{i j}-X_{i j})^2])^{1/2}>\zeta_{n1}\right)\leq C_1n^{-c_1}.
$$
Moreover, if the EB method is used, suppose that 
$$
\Pr\left(\sqrt{\log p}\max_{i,j}|\hat{X}_{i j}-X_{i j}|>\sqrt{n}\zeta_{n,1}\right)\leq C_1 n^{-c_1}.
$$
Finally, assume that $\sigma_{j}\geq c_1$ for all $1\leq j \leq p$. Then all the conclusions of Theorem \ref{thm: simulation MS method} hold with $T$, $c^{MB,2S}(\alpha)$, and $c^{EB,2S}(\alpha)$ defined in this section.
\end{theorem}

\begin{remark}[Data with estimated parameters]
When Theorem \ref{thm: approximate moment inequalities} is applied to data with estimated parameters, verifying \eqref{eq: main condition approximate inequalities} typically requires imposing further conditions, even when $p$ is small. For example, suppose that we observe a random sample $(V_i,Y_i)$, $i=1,\dots,n$, from the distribution of $(V,Y)$, where both $V$ and $Y$ are scalar random variables. Suppose further that $\theta = \Ep[Y]$ and that we are interested in testing whether $\Ep[f(V,\theta)] \leq 0$ for some known function $f:\mathbb R^2\to\mathbb R$. To map this problem into the setting of Theorem \ref{thm: approximate moment inequalities}, denote $X_i = f(V_i,\theta)$ and $\hat X_i = f(V_i,\hat\theta)$, where $\hat\theta = n^{-1}\sum_{i=1}^n Y_i$. Moreover, let $\hat\mu = n^{-1}\sum_{i=1}^n \hat X_i$ and $\hat \mu_0 = n^{-1}\sum_{i=1}^n X_i$. Finally, denote by $f'(V,\theta)$ the derivative of $f(V,\theta)$ with respect to $\theta$. Then, under mild regularity conditions, $\sqrt n(\hat\mu - \hat\mu_0) = n^{-1/2}\sum_{i=1}^n f'(V_i,\theta)(\hat\theta - \theta) + o_P(1)$, and so \eqref{eq: main condition approximate inequalities} can be verified only if $\Ep[f'(V,\theta)] = 0$.
\end{remark}

\section{Details on equations (\ref{eq: approximate data}) and (\ref{eq: mean to test}) in the main text}\label{sec: details}
In this section, we continue discussion of the ``Dynamic model of imperfect competition'' example presented in Section \ref{sec: motivating examples}. In particular, we explain how to construct $X_{i j}(s,\sigma_j',\theta)$'s that satisfy
\begin{equation}\label{eq: approximate data 2}
\hat{X}_{i j}(s,\sigma_j',\theta)=X_{i j}(s,\sigma_j',\theta)+o_P(1)
\end{equation}
and
\begin{equation}\label{eq: mean to test 2}
\Ep[X_{i j}(s,\sigma_j',\theta)]=V_j(s,\sigma_j',\sigma_{-j},\theta)-V_j(s,\sigma,\theta),
\end{equation}
which are needed to apply results in Appendix \ref{sec: approximate moment inequalities}. We also show that setting 
$$
\hat\mu_j := \hat{V}_j(s,\sigma_j',\hat\sigma_{-j},\theta)-\hat{V}_j(s,\hat\sigma_j,\hat\sigma_{-j},\theta)
$$ 
gives 
\begin{equation}\label{eq: verification third main condition}
\sqrt n|\hat\mu_j - \hat\mu_{j,0}| = o_P(n^{-1/2})
\end{equation}
with $\hat\mu_{j,0} = n^{-1}\sum_{i=1}^n X_{i j}(s,\sigma_j',\theta)$, which is also needed to apply results in Appendix \ref{sec: approximate moment inequalities}. We continue to assume that the data consist of observations on $n$ i.i.d. markets.

To construct $X_{i j}(s,\sigma_j',\theta)$'s, assume the following linear expansions:
\begin{align}
& \sqrt{n}(\hat{V}_j(s,\hat\sigma_j,\hat\sigma_{-j},\theta)-V_j(s,\sigma_j,\sigma_{-j},\theta))\nonumber\\
&\qquad =\frac{1}{\sqrt{n}}\sum_{k=1}^n\psi_{k j}(s,\theta)+o_P(n^{-1/2})\label{eq: linear expansion 1}
\end{align}
and
\begin{align}
&\sqrt{n}(\hat{V}_j(s,\sigma_j',\hat\sigma_{-j},\theta)-V_j(s,\sigma_j',\sigma_{-j},\theta))\nonumber\\
&\qquad=\frac{1}{\sqrt{n}}\sum_{k=1}^n\psi_{k j}'(s,\sigma_j',\theta)+o_P(n^{-1/2}),\label{eq: linear expansion 2}
\end{align}
where $\psi_{k j}$ and $\psi_{k j}'$ are influence functions depending only on the data for the market $k$ and satisfying
\begin{equation}\label{eq: influence functions}
\Ep[\psi_{k j}(s,\theta)]=0\ \text{ and }\ \Ep[\psi_{k j}'(s,\sigma_j',\theta)]=0.
\end{equation}
These are standard expansions that hold in many settings, so for brevity, we do not discuss the regularity conditions behind them. Then, considering leave-market-$i$-out estimates $\hat V_j^{-i}(s,\sigma',\theta)$ and $\sigma^{-i}$ as in the main text, we obtain
\begin{align*}
& \sqrt{n-1}(\hat{V}_j^{-i}(s,\hat\sigma_j^{-i},\hat\sigma_{-j}^{-i},\theta)-V_j(s,\sigma_j,\sigma_{-j},\theta))\\
&\qquad=\frac{1}{\sqrt{n-1}}\sum_{k=1;\, k\neq i}^n\psi_{k j}(s,\theta)+o_P(n^{-1/2})
\end{align*}
and
\begin{align*}
& \sqrt{n-1}(\hat{V}_j^{-i}(s,\sigma_j',\hat\sigma_{-j}^{-i},\theta)-V_j(s,\sigma_j',\sigma_{-j},\theta))\\
&\qquad=\frac{1}{\sqrt{n-1}}\sum_{k=1;\, k\neq i}^n\psi_{k j}'(s,\sigma_j',\theta)+o_P(n^{-1/2}).
\end{align*}
Hence, we have for all $i=1,\dots,n$,
\begin{align*}
\tilde{X}_{i j}(s,\theta)&:=n\hat{V}_j(s,\hat\sigma_j,\hat\sigma_{-j},\theta)-(n-1)\hat{V}_j^{-i}(s,\hat\sigma_j^{-i},\hat\sigma_{-j}^{-i},\theta)\\
&=V_j(s,\sigma_j,\sigma_{-j},\theta)+\psi_{i j}(s,\theta)+o_P(1)
\end{align*}
and
\begin{align*}
\tilde{X}_{i j}'(s,\sigma_j',\theta)&:=n\hat{V}_j(s,\sigma_j',\hat\sigma_{-j},\theta)-(n-1)\hat{V}_j^{-i}(s,\sigma_j',\hat\sigma_{-j}^{-i},\theta)\\
&=V_j(s,\sigma_j',\sigma_{-j},\theta)+\psi_{i j}'(s,\sigma_j',\theta)+o_P(1).
\end{align*}
Therefore, defining
\begin{align*}
X_{i j}(s,\sigma_j',\theta):=&V_j(s,\sigma_j',\sigma_{-j},\theta)-V_j(s,\sigma_j,\sigma_{-j},\theta)+\psi_{i j}'(s,\sigma_j',\theta)-\psi_{i j}(s,\theta),
\end{align*}
we obtain
$$
\hat{X}_{i j}(s,\sigma_j',\theta)=\tilde{X}_{i j}'(s,\sigma_j',\theta)-\tilde{X}_{i j}(s,\theta)=X_{i j}(s,\sigma_j',\theta)+o_P(1).
$$
Combining these equalities with (\ref{eq: influence functions}) implies (\ref{eq: approximate data}) and (\ref{eq: mean to test}) from the main text. Moreover, observe that it follows from \eqref{eq: linear expansion 1} and \eqref{eq: linear expansion 2} that
\begin{align*}
\hat\mu_j &= \hat{V}_j(s,\sigma_j',\hat\sigma_{-j},\theta)-\hat{V}_j(s,\hat\sigma_j,\hat\sigma_{-j},\theta)\\
& = V_j(s,\sigma_j',\sigma_{-j},\theta)-V_j(s,\sigma_j,\sigma_{-j},\theta) \\
&\qquad + \frac{1}{n}\sum_{i=1}^n(\psi_{i j}'(s,\sigma_j',\theta) - \psi_{i j}(s,\theta)) + o_P(n^{-1})\\
& = \frac{1}{n}\sum_{i=1}^n X_{i j}(s,\sigma_j',\theta) + o_P(n^{-1}) = \hat\mu_{j,0} + o_P(n^{-1}),
\end{align*}
and so \eqref{eq: verification third main condition} holds. Finally, observe that by imposing further regularity conditions on the terms $o_P(n^{-1/2})$ in \eqref{eq: linear expansion 1} and \eqref{eq: linear expansion 2}, it is rather standard to make sure that \eqref{eq: approximate data 2} holds uniformly over $i$ and $j$ and that \eqref{eq: verification third main condition} holds uniformly over $j$, which are the needed to apply results in Appendix \ref{sec: approximate moment inequalities}.

\section{Proofs}

In what follows, let $\phi (\cdot)$ denote the density  function of the standard normal distribution, and
let $\overline{\Phi}(\cdot) = 1-\Phi(\cdot)$ where recall that $\Phi(\cdot)$ is the distribution function of the standard normal distribution.

\subsection{Technical tools}
We state here some technical tools used to prove the theorems.
The following lemma states a moderate deviation inequality for self-normalized sums.

\begin{lemma}\label{lem: moderate deviations}
Let $\xi_{1},\dots, \xi_{n}$ be independent centered random variables with $\Ep [ \xi_{i}^{2} ] = 1$ and $\Ep [| \xi_{i} |^{2+\nu} ] < \infty$ for all $1 \leq i \leq n$ where $0 < \nu \leq 1$.
Let $S_{n}=\sum_{i=1}^{n} \xi_{i}, V_{n}^2=\sum_{i=1}^{n} \xi_{i}^2$, and $D_{n,\nu}=(n^{-1}\sum_{i=1}^{n} \Ep [ |\xi_{i}|^{2+\nu}])^{1/(2+\nu)}$. Then uniformly in $0\leq x\leq n^{\frac{\nu}{2(2+\nu)}}/D_{n,\nu}$,
\[
\left | \frac{\Pr(S_n/V_n\geq x)}{\overline{\Phi}(x)} - 1 \right | \leq  K n^{-\nu/2} D_{n,\nu}^{2+\nu} (1+x)^{2+\nu},
\]
where $K$ is a universal constant.
\end{lemma}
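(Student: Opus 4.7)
My plan is to invoke the self-normalized moderate deviation theorem of Jing, Shao, and Wang (2003): the bound claimed here is precisely their Theorem 7.4 (the Cramér-type moderate deviation for self-normalized sums under a $(2+\nu)$-th moment condition). So the cleanest approach is simply to cite that result. For completeness, let me outline the main strategy their proof follows.

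The starting point is the self-normalization identity
\[
\Big\{S_n/V_n \geq x\Big\} \;=\; \Big\{\,\max_{t>0}\big[tS_n - t^2 V_n^2/2\big] \geq x^2/2,\; S_n>0\,\Big\},
\]
which converts the ratio event into an event about the exponential-tilt process $\exp(tS_n - t^2 V_n^2/2) = \prod_{i=1}^n \exp(t\xi_i - t^2\xi_i^2/2)$. Centering and expanding $\log \Ep[e^{t\xi_i - t^2\xi_i^2/2}]$ through order $2+\nu$ in $t$, and using $\Ep[\xi_i]=0$, $\Ep[\xi_i^2]=1$, and the bound $\Ep[|\xi_i|^{2+\nu}]$ on the remainder, gives an explicit control on the moment generating function. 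Choosing the tilt $t \asymp x/\sqrt{n}$ and applying Markov's inequality yields the upper bound $\Pr(S_n/V_n \geq x) \leq \bar\Phi(x)\{1 + K n^{-\nu/2} D_{n,\nu}^{2+\nu}(1+x)^{2+\nu}\}$, with the moment-condition range $x \leq n^{\nu/(2(2+\nu))}/D_{n,\nu}$ ensuring the remainder term is $O(1)$.

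For the matching lower bound, I would carry out a conjugate-measure (Cramér) construction: define the tilted law $d\Pr^*/d\Pr \propto \exp(tS_n - t^2 V_n^2/2)$ with the saddle-point choice $t \approx x/\sqrt{n}$, so that under $\Pr^*$ the recentered self-normalized statistic becomes approximately standard normal. A Berry-Esseen bound under the tilted law, combined with concentration of $V_n^2/n$ around $1$, then approximates $\Pr(S_n/V_n \geq x)$ by $\bar\Phi(x)$ with the stated relative error. Truncation of each $\xi_i$ at level $\sqrt{n}/x$ is used to control the higher-order terms uniformly.

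The main obstacle is obtaining a \emph{relative} (rather than additive) error uniformly over the entire range $0 \leq x \leq n^{\nu/(2(2+\nu))}/D_{n,\nu}$; at the upper end of this range the Gaussian tail $\bar\Phi(x)$ is already very small, so the error term must be shown to be an honest multiplicative perturbation, which forces the tilt, the truncation level, and the third-order Taylor remainder to be balanced with care so that exactly the polynomial factor $(1+x)^{2+\nu}$ appears. This is the technically delicate calculation carried out in Jing, Shao, and Wang (2003), which I would cite rather than reproduce.
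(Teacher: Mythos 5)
Your proposal is correct and matches the paper exactly: the paper's proof of this lemma is a one-line citation to Theorem 7.4 of de la Pe\~{n}a, Lai, and Shao (2009) and the original paper of Jing, Shao, and Wang (2003), which is precisely the result you invoke. Your additional sketch of the tilting/conjugate-measure argument behind that theorem is accurate but not required, since the paper likewise does not reproduce it.
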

\begin{proof}
See Theorem 7.4 in \cite{Shao_book} or the original paper, \cite{JSW03}.
\end{proof}

The following lemma states a Fuk-Nagaev type inequality, which is a deviation inequality for the maximum of the sum of random vectors from its expectation.

\begin{lemma}[A Fuk-Nagaev type inequality]
\label{lem: fuk-nagaev}
Let $X_{1},\dots,X_{n}$ be independent random vectors in $\R^{p}$.
Define $\sigma^{2} := \max_{1 \leq j \leq p} \sum_{i=1}^{n} \Ep [X_{ij}^{2}]$. Then for every $s > 1$ and $t > 0$,
\begin{multline*}
\Pr \left  ( \max_{1 \leq j \leq p} \Big | \sum_{i=1}^{n} (X_{ij} - \Ep [ X_{ij} ]) \Big | \geq 2\Ep \Big [ \max_{1 \leq j \leq p} \Big | \sum_{i=1}^{n} (X_{ij} - \Ep [ X_{ij} ]) \Big | \Big  ]+t \right )
\\ \leq e^{-t^{2}/(3\sigma^{2})} + \frac{K_{s}}{t^s} \sum_{i=1}^{n} \Ep \left[ \max_{1 \leq j \leq p} | X_{ij} |^{s} \right],
\end{multline*}
where $K_{s}$ is a constant depending only on $s$.
\end{lemma}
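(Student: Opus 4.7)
The plan is to reduce the statement to Theorem \ref{thm: simulation MS method} by a coupling argument: I will show that replacing $\hat X_{ij}$ with $X_{ij}$ throughout changes both the test statistic $T$ and the critical value $c^{B,2S}(\alpha)$ by an amount that is polynomially small in $n$, and then appeal to anti-concentration for the distribution of the infeasible exact statistic to translate this into a polynomial bound on the change in rejection probability.

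First I would define the infeasible analogues $T_0:=\max_j\sqrt n\,\hat\mu_{j,0}/\hat\sigma_{j,0}$ and $c^{B,2S}_0(\alpha)$, where the bootstrap procedures and selection set $\hat J_B$ are computed from $X_{ij}$ rather than $\hat X_{ij}$. The first key step is to show that on an event of probability at least $1-Cn^{-c}$,
\[
\max_{1\le j\le p}|\hat\mu_j-\hat\mu_{j,0}|\le \zeta_{n1}/\sqrt n,\qquad \max_{1\le j\le p}|\hat\sigma_j^2-\hat\sigma_{j,0}^2|\le C\zeta_{n1},
\]
the second bound following by expanding $\hat\sigma_j^2-\hat\sigma_{j,0}^2$ as $\En[(\hat X_{ij}-X_{ij})(\hat X_{ij}+X_{ij}-\hat\mu_j-\hat\mu_{j,0})]$ and applying Cauchy--Schwarz together with the two assumed tail bounds (and the fact that $\En[X_{ij}^2]$ concentrates near $\sigma_j^2+\mu_j^2$, which is controlled by the moment conditions in (\ref{eq: cond3})). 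Combining with the lower bound $\sigma_j\ge c_1$ and standard concentration for $\hat\sigma_{j,0}^2$ (again via Lemma \ref{lem: fuk-nagaev}), one obtains $\max_j|\hat\sigma_j/\hat\sigma_{j,0}-1|=O_P(\zeta_{n1}+n^{-c_1})$, whence
\[
|T-T_0|\le C\bigl(\zeta_{n1}+\zeta_{n1}\,T_0\bigr)
\]
with high probability. Since Theorems \ref{thm: simulation plugin method}--\ref{thm: simulation MS method} (and a Gaussian comparison / anti-concentration argument) imply $T_0=O_P(\sqrt{\log p})$, this yields $|T-T_0|\le C(\zeta_{n1}\sqrt{\log p}+\zeta_{n1})=O(n^{-c})$ using the assumption $\zeta_{n1}\log p\le C_1 n^{-c_1}$.

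Next I would carry out the same comparison for the bootstrap statistics. For MB, replacing $(\hat X_{ij},\hat\mu_j,\hat\sigma_j)$ with $(X_{ij},\hat\mu_{j,0},\hat\sigma_{j,0})$ in $W^{MB}_{\hat J_{MB}}$ changes it by at most $C\zeta_{n1}\sqrt{\log p}$ on the high-probability event above, once one controls $|\En[\epsilon_i(\hat X_{ij}-X_{ij})]|$ and $|\En[\epsilon_i(\hat\mu_j-\hat\mu_{j,0})]|$ conditionally on the data via a Gaussian maximal inequality applied to the $\epsilon_i$'s. For EB the analogous bound requires the extra uniform-in-$i$ hypothesis: since $|\En[\hat X_{ij}^*-X_{ij}^*]|\le\max_i|\hat X_{ij}-X_{ij}|$ for any bootstrap draw, the condition $\sqrt{\log p}\max_{i,j}|\hat X_{ij}-X_{ij}|\le\sqrt n\,\zeta_{n1}$ ensures the EB bootstrap statistic also changes by $O(\zeta_{n1}\sqrt{\log p})$ uniformly over the bootstrap randomness. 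The selection set $\hat J_B$ defined with approximate data is then sandwiched, with high probability, between exact-data selection sets at slightly inflated and deflated thresholds; monotonicity of the conditional quantile in the index set then gives $|c^{B,2S}(\alpha)-c^{B,2S}_0(\alpha)|=O(n^{-c})$ after invoking anti-concentration of the Gaussian maximum (Lemma 4.3 in CCK14) to show that the two ``sandwiching'' quantiles differ by $O(n^{-c})$.

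Finally, writing $\Pr(T>c^{B,2S}(\alpha))\le \Pr(T_0>c^{B,2S}_0(\alpha)-\eta_n)+\Pr(|T-T_0|+|c^{B,2S}-c^{B,2S}_0|>\eta_n)$ with $\eta_n=Cn^{-c}$, I apply Theorem \ref{thm: simulation MS method} to the first probability together with Gaussian anti-concentration (which converts the $\eta_n$ shift in threshold into a $C\eta_n\sqrt{\log p}=O(n^{-c'})$ cost in probability) to conclude the size control bound under $H_0$. The lower bound at $\mu_j=0$ is handled symmetrically. Uniformity in the class of distributions follows because every constant involved depends only on $c_1,C_1$. The main obstacle I anticipate is the EB comparison: the bootstrap resamples individual observations, so the supremum-in-$i$ condition on $|\hat X_{ij}-X_{ij}|$ is essential and must be tracked carefully through the conditional quantile manipulations, in contrast to the MB case where only averages weighted by $\epsilon_i$ appear.
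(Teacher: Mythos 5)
Your proposal does not prove the statement at hand. The statement is Lemma \ref{lem: fuk-nagaev}, a Fuk--Nagaev type deviation inequality for $\max_{1\le j\le p}|\sum_{i=1}^n(X_{ij}-\Ep[X_{ij}])|$ around twice its expectation, valid for arbitrary independent random vectors $X_1,\dots,X_n$ in $\R^p$ with no bootstrap, no moment-inequality testing, and no approximate data in sight. What you have written is instead a proof sketch for Theorem \ref{thm: approximate moment inequalities} (validity of the two-step MB/EB methods when the exact $X_{ij}$ are replaced by approximations $\hat X_{ij}$): the coupling of $T$ with $T_0$, the comparison of bootstrap statistics and selection sets, the sup-in-$i$ condition needed for the EB case, and the appeal to Theorem \ref{thm: simulation MS method} all belong to that result. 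None of this bears on the probability bound $e^{-t^2/(3\sigma^2)} + K_s t^{-s}\sum_i \Ep[\max_j|X_{ij}|^s]$ that the lemma asserts. Indeed, your argument cannot be a proof of the lemma even in principle, because the lemma is one of the technical tools used (via Lemma \ref{lem: variance} and the proofs of the bootstrap validity theorems) to establish the very results you are invoking; reducing it to Theorem \ref{thm: simulation MS method} would be circular.

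For the record, the paper's proof of Lemma \ref{lem: fuk-nagaev} is short: it applies Theorem 3.1 of Einmahl and Li (2008) in the Banach space $(\R^p,|\cdot|_\infty)$ with $\eta=\delta=1$, observes that their argument goes through for $s>1$ rather than only $s>2$, and verifies that the weak variance parameter $\Lambda_n^2=\sup\{\sum_{i=1}^n\Ep[(\sum_{j=1}^p\lambda_jX_{ij})^2]:\sum_j|\lambda_j|\le 1\}$ is bounded by $\sigma^2=\max_{1\le j\le p}\sum_{i=1}^n\Ep[X_{ij}^2]$ via Jensen's inequality applied to the convex combination $\sum_j|\lambda_j|\sign(\lambda_j)X_{ij}$. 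A correct self-contained proof would need either this reduction to a known Banach-space Fuk--Nagaev inequality or a direct truncation-plus-Talagrand-concentration argument; your text supplies neither.
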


\begin{proof}
See Theorem 3.1 in \cite{EL08}. Note that \cite{EL08} assumed that $s > 2$ but their proof applies to the case where $s > 1$.
More precisely, we apply Theorem 3.1 in \cite{EL08} with $(B,\| \cdot \|)=(\R^{p}, | \cdot |_{\infty})$ where $| x |_{\infty} = \max_{1 \leq j \leq p} | x_{j} |$ for $x = (x_{1},\dots,x_{p})^{T}$, and $\eta = \delta = 1$. The unit ball of the dual of $(\R^{p}, | \cdot |_{\infty})$ is the set of linear functions $\{ x =(x_{1},\dots,x_{p})^{T} \mapsto \sum_{j=1}^{p} \lambda_{j} x_{j} : \sum_{j=1}^{p} | \lambda_{j} | \leq 1 \}$, and for $\lambda_{1},\dots,\lambda_{p}$ with $\sum_{j=1}^{p} | \lambda_{j} | \leq 1$, by Jensen's inequality,
\begin{align*}
&\textstyle \sum_{i=1}^{n} \Ep \left[ (\sum_{j=1}^{p} \lambda_{j} X_{ij})^{2} \right]
=\sum_{i=1}^{n} \Ep \left[ (\sum_{j=1}^{p} | \lambda_{j} | \sign (\lambda_{j}) X_{ij})^{2} \right] \\
&\quad\leq \textstyle \sum_{j=1}^{p} |\lambda_{j}| \sum_{i=1}^{n} \Ep [ X_{ij}^{2} ]  \leq \max_{1 \leq j \leq p} \sum_{i=1}^{n} \Ep [ X_{ij}^{2} ] =\sigma^{2},
\end{align*}
where $\sign (\lambda_{j})$ is the sign of $\lambda_{j}$.
Hence in this case $\Lambda^{2}_{n}$ in Theorem 3.1 of \cite{EL08} is bounded by (and indeed equal to) $\sigma^{2}$.
\end{proof}

In order to use Lemma \ref{lem: fuk-nagaev}, we need a suitable bound on  the expectation of the maximum.
The following lemma is useful for that purpose.

\begin{lemma}
\label{lem: maximal ineq}
Let $X_{1},\dots,X_{n}$ be independent random vectors in $\R^{p}$ with $p \geq 2$. Define $M := \max_{1 \leq i \leq n} \max_{1 \leq j \leq p} | X_{ij} |$ and $\sigma^{2} := \max_{1 \leq j \leq p} \sum_{i=1}^{n} \Ep [ X_{ij}^{2} ]$.
Then
\begin{equation*}
\Ep \left [\max_{1 \leq j \leq p} \Big  | \sum_{i=1}^{n} (X_{ij} - \Ep [X_{ij}]) \Big  | \right ] \leq K (\sigma \sqrt{\log p} + \sqrt{\Ep [ M^{2} ]} \log p),
\end{equation*}
where $K$ is a universal constant.
\end{lemma}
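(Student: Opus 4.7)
The plan is to apply a two-level symmetrization followed by the Ledoux--Talagrand contraction principle, reducing the problem to a self-bounding quadratic inequality for the quantity $A := \Ep[\max_{1\le j\le p}\sum_{i=1}^n X_{ij}^2]$. First, let $\varepsilon_1,\dots,\varepsilon_n$ be i.i.d.\ Rademacher variables independent of $X_1,\dots,X_n$. The standard symmetrization inequality gives
\[
\Ep\max_{j}\Big|\sum_{i=1}^n (X_{ij}-\Ep[X_{ij}])\Big| \le 2\,\Ep\max_{j}\Big|\sum_{i=1}^n\varepsilon_i X_{ij}\Big|.
\]
Conditional on $X_1,\dots,X_n$, each sum $\sum_i\varepsilon_i X_{ij}$ is sub-Gaussian with variance proxy $\sum_i X_{ij}^2$, so the classical maximal inequality for sub-Gaussian variables (using $p\ge 2$), followed by the outer expectation and Jensen's inequality applied to the concave square root, yields
\[
\Ep\max_{j}\Big|\sum_i\varepsilon_i X_{ij}\Big| \;\le\; C\sqrt{\log p}\cdot\sqrt{A}.
\]
Thus everything is reduced to controlling $A$ in terms of $\sigma^2$ and $\Ep[M^2]$.

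To bound $A$, I first use the triangle inequality and a second symmetrization to write
\[
A \;\le\; \sigma^2 + \Ep\max_j\Big|\sum_i\bigl(X_{ij}^2-\Ep[X_{ij}^2]\bigr)\Big| \;\le\; \sigma^2 + 2\,\Ep\max_j\Big|\sum_i\varepsilon_i X_{ij}^2\Big|.
\]
Conditional on $(X_i)_i$, the map $x\mapsto x^2$ vanishes at $0$ and is $2M$-Lipschitz on $[-M,M]$, so the Ledoux--Talagrand contraction principle, applied pathwise, gives
\[
\Ep_\varepsilon\max_j\Big|\sum_i\varepsilon_i X_{ij}^2\Big| \;\le\; 4M\cdot \Ep_\varepsilon\max_j\Big|\sum_i\varepsilon_i X_{ij}\Big|.
\]
Integrating out the $X_i$'s, using Cauchy--Schwarz to peel off the factor of $M$, and then bounding the resulting $L^2$ norm of the Rademacher maximum by the same sub-Gaussian argument as above (which costs an additional $\sqrt{\log p}$, since $\Ep[\max_j Y_j^2\mid X]\le C(\log p)\max_j\sum_i X_{ij}^2$ for sub-Gaussian $Y_j$'s), I obtain
\[
\Ep\max_j\Big|\sum_i\varepsilon_i X_{ij}^2\Big| \;\le\; C'\sqrt{\Ep[M^2]\,\log p}\cdot\sqrt{A}.
\]

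Combining the two displays produces the self-referential quadratic inequality $A\le \sigma^2 + C''\sqrt{A\cdot \Ep[M^2]\log p}$, which, viewed as a quadratic in $\sqrt{A}$, is solved in the usual way to yield $A \le C'''\bigl(\sigma^2 + \Ep[M^2]\log p\bigr)$. Plugging this into the first display gives
\[
\Ep\max_j\Big|\sum_i(X_{ij}-\Ep[X_{ij}])\Big| \;\le\; 2C\sqrt{\log p}\cdot\sqrt{A} \;\le\; K\bigl(\sigma\sqrt{\log p}+\sqrt{\Ep[M^2]}\,\log p\bigr),
\]
as claimed. The main technical obstacle lies in the second symmetrization step: the contraction principle must be carried out conditionally on $(X_i)_i$ because the Lipschitz constant $2M$ is random, and closing the recursion forces one to upgrade the first-moment sub-Gaussian bound to an $L^2$ bound on the Rademacher maximum, which is precisely the source of the extra $\log p$ factor in front of $\sqrt{\Ep[M^2]}$ in the conclusion.
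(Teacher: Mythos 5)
Your argument is correct, and it is essentially the proof of the result the paper invokes: the paper itself only cites Lemma 8 of \cite{CCK13}, whose proof proceeds exactly as you do, via symmetrization, the conditional sub-Gaussian maximal bound, the Ledoux--Talagrand contraction principle applied to $x\mapsto x^{2}$ conditionally on the data, and the resulting self-bounding quadratic in $\Ep[\max_{j}\sum_{i}X_{ij}^{2}]$. All the steps, including the factor-of-two bookkeeping in the contraction step and the Cauchy--Schwarz/Jensen manipulations needed to close the recursion, check out.
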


\begin{proof}
See Lemma 8 in \cite{CCK13}.
\end{proof}

For bounding $\Ep [ M^{2} ]$, we will frequently use the following inequality: let $\xi_{1},\dots,\xi_{n}$ be arbitrary random variables with $\Ep [ | \xi_{i} |^{s} ] < \infty$ for all $1 \leq i \leq n$ for some $s \geq 1$. Then
\begin{align*}
\Ep [\max_{1 \leq i \leq n} | \xi_{i} | ] &\leq (\Ep[ \max_{1 \leq i \leq n} | \xi_{i} |^{s}])^{1/s} \\
&\leq ({\textstyle \sum}_{i=1}^{n} \Ep [ | \xi_{i} |^{s} ])^{1/s} \leq n^{1/s} \max_{1 \leq i \leq n} (\Ep[ | \xi_{i} |^{s}])^{1/s}.
\end{align*}
For centered normal random variables $\xi_{1},\dots,\xi_{n}$ with $\sigma^{2} = \max_{1 \leq i \leq n} \Ep[ \xi_{i}^{2} ]
$, we have
\[
\Ep\left[ \max_{1 \leq j \leq p} \xi_{i} \right] \leq \sqrt{2 \sigma^{2} \log p}.
\]
See, for example, Proposition 1.1.3 in \cite{Talagrand2003}.

\begin{lemma}\label{lem: critical value bound}
Let $(Y_1,\dots,Y_p)^{T}$ be a normal random vector with $\Ep [ Y_{j} ]= 0$ and $\Ep[ Y_{j}^{2} ]= 1$ for all $1 \leq j \leq p$.
(i) For $\alpha\in(0,1)$, let $c_0(\alpha)$ denote the $(1-\alpha)$-quantile of the distribution of $\max_{1\leq j\leq p}Y_j$. Then
$c_0(\alpha)\leq \sqrt{2\log p}+\sqrt{2 \log(1/ \alpha)}$.
(ii) For every $t \in \R$ and $\epsilon > 0$,
$\Pr ( | \max_{1 \leq j \leq p} Y_{j} - t | \leq \epsilon ) \leq 4 \epsilon (\sqrt{2 \log p} +1)$.
\end{lemma}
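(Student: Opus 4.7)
My plan is to dispatch the two parts by different tools: Part (i) by Gaussian concentration for Lipschitz functions, Part (ii) by the anti-concentration inequality for maxima of Gaussian vectors from the authors' earlier work. For Part (i), write $Y=(Y_1,\dots,Y_p)^T=\Sigma^{1/2}Z$ with $Z\sim N(0,I_p)$ and $\Sigma$ the covariance matrix whose diagonal is $1$ by hypothesis. The map $z\mapsto\max_{1\leq j\leq p}(\Sigma^{1/2}z)_j$ is $1$-Lipschitz in the Euclidean norm because each row of $\Sigma^{1/2}$ has norm $\sqrt{\Sigma_{jj}}=1$; hence the Borell--Sudakov--Tsirelson inequality yields
\[
\Pr\bigl(\max_{1\leq j\leq p} Y_j \geq \Ep[\max_{1\leq j\leq p} Y_j] + r\bigr)\leq e^{-r^{2}/2},\quad r\geq 0.
\]
Equating the right-hand side to $\alpha$ and solving gives $r=\sqrt{2\log(1/\alpha)}$, so $c_0(\alpha)\leq \Ep[\max_j Y_j]+\sqrt{2\log(1/\alpha)}$. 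Finally, the standard exponential-moment bound (Jensen applied to $\lambda^{-1}\log\sum_j e^{\lambda Y_j}$ and optimized at $\lambda=\sqrt{2\log p}$, using $\Ep[e^{\lambda Y_j}]=e^{\lambda^2/2}$) yields $\Ep[\max_j Y_j]\leq\sqrt{2\log p}$, completing Part (i).

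For Part (ii), I will invoke the anti-concentration inequality for Gaussian maxima established in earlier work by the same authors (Chernozhukov, Chetverikov, and Kato, ``Comparison and anti-concentration bounds for maxima of Gaussian random vectors''), which in the present setting gives
\[
\sup_{t\in\R}\Pr\bigl(|\max_{1\leq j\leq p}Y_j-t|\leq\epsilon\bigr)\leq 4\epsilon\bigl(\Ep[\max_{1\leq j\leq p} Y_j]+1\bigr).
\]
Combining with the bound $\Ep[\max_j Y_j]\leq\sqrt{2\log p}$ derived in Part (i) delivers $4\epsilon(\sqrt{2\log p}+1)$, as required.

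The main obstacle is Part (ii): anti-concentration of a maximum is not implied by concentration and requires genuinely more delicate arguments. In the cited work the proof proceeds by replacing $\max_j Y_j$ with the smooth surrogate $F_\beta(y)=\beta^{-1}\log\sum_j e^{\beta y_j}$, controlling its density via a Stein-type identity for Gaussian vectors, and then optimizing the smoothing parameter $\beta$. Since that theorem is already available, my only task here is to verify that its hypotheses (normality and standardized marginal variances) match our setting, and then to plug in the bound on $\Ep[\max_j Y_j]$ from Part (i).
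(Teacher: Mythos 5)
Your proof is correct and follows essentially the same route as the paper: part (i) via the Borell--Sudakov--Tsirelson concentration inequality combined with the bound $\Ep[\max_{1\leq j\leq p}Y_j]\leq\sqrt{2\log p}$, and part (ii) by invoking the anti-concentration result of Chernozhukov, Chetverikov, and Kato (Theorem 3 there) together with the same expectation bound. The only difference is that you spell out the $1$-Lipschitz verification and the exponential-moment derivation of the maximal inequality, which the paper handles by citation.
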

\begin{proof}
Part (ii) follows from Theorem 3 in \cite{CCK13} together with the fact that
\begin{equation}\label{eq: gaussian maximal inequality}
\Ep\left[\max_{1\leq j\leq p}Y_j\right]\leq \sqrt{2\log p}.
\end{equation}
For part (i),
by the Borell-Sudakov-Tsirelson inequality (see Theorem A.2.1 in \cite{VW96}), for every $r > 0$,
\[
\Pr\left(\max_{1\leq j\leq p}Y_j\geq \Ep\Big[\max_{1\leq j\leq p}Y_j\Big]+r\right)\leq e^{-r^2/2},
\]
by which we have
\begin{equation}\label{eq: critical value bound}
c_0(\alpha)\leq \Ep\left[\max_{1\leq j\leq p}Y_j\right]+\sqrt{2\log (1/\alpha)}.
\end{equation}
Combining (\ref{eq: critical value bound}) and (\ref{eq: gaussian maximal inequality}) leads to the desired result.
\end{proof}

\subsection{On Bonferroni approach}
We state and prove here a result on validity of the Bonferroni approach for testing \eqref{eq: null hypothesis} against \eqref{eq: alternative hypothesis}.
\begin{theorem}[Validity of Bonferroni method]\label{thm: bonferroni}
If there exist constants $0<c_{1}<1/2$ and $C_{1} >0$ such that
\begin{equation}
M_{n,3}^{3} \log^{3/2} (p/\alpha) \leq C_{1}n^{1/2-c_{1}}, \label{eq: cond bonf}
\end{equation}
then there exists a positive constant $C$ depending only on $C_1$ such that under $H_{0}$,
\begin{equation}\label{eq: SN test size bound}
\Pr(T>c^{Bon}(\alpha)) \leq \alpha + Cn^{-c_{1}},
\end{equation}
where $c^{Bon}(\alpha) = \Phi^{-1}(1 - \alpha/p)$. Moreover, this bound holds uniformly over all distributions $\mathcal L_X$ satisfying (\ref{eq: variance cond}) and (\ref{eq: cond bonf}).
\end{theorem}

\begin{proof}
For brevity of notation, denote $c_0 = c^{Bon}(\alpha) = \Phi^{-1}(1 - \alpha/p)$. Then by \eqref{eq: first step}, under the null,
\begin{align*}
&\Pr(T > c^{Bon}(\alpha))
 = \Pr(T > c_0) \leq \sum_{j=1}^p \Pr\left( U_j > c_0/\sqrt{1 + c_0^2/n} \right)\\
&\qquad \leq \sum_{j=1}^p \Big(1 + K n^{-1/2}M_{n,3}^3(1 + \Phi^{-1}(1 - \alpha/p))^3\Big)\bar\Phi\left(c_0 / \sqrt{1 + c_0^2 / n}\right)
\end{align*}
for some absolute constant $K>0$, where the last inequality follows by applying Lemma \ref{lem: moderate deviations} with $S_n/V_n = U_j$, $x = c_0/\sqrt{1 + c_0^2/n}$, $\nu = 1$, and $D_{n,1} = M_{n,3}$. Hence, like in the proof of Theorem \ref{thm: analytical plugin method},
\begin{equation}\label{eq: bonf der 1}
\Pr(T > c^{Bon}(\alpha)) \leq p(1 + C' n^{-c_1})\bar\Phi\left(c_0 / \sqrt{1 + c_0^2/n}\right)
\end{equation}
for some constant $C'$ depending only on $C_1$. Further,
\begin{align*}
\bar\Phi\left(c_0 / \sqrt{1 + c_0^2/n}\right) 
&\leq \bar\Phi(c_0) + \phi(c_0)\left(c_0 - c_0/\sqrt{1 + c_0^2/n} \right)\\
&\leq \bar\Phi(c_0) + c_0\phi(c_0)\left( \sqrt{1 + c_0^2/n} -1 \right)\\
&\leq \bar\Phi(c_0) + c_0^3\phi(c_0)/n,
\end{align*}
where $\phi$ is the pdf of the standard normal distribution. Also, it follows from Proposition 2.2.1 in \cite{Dudley02} that $\phi(c_0) \leq K' c_0\bar\Phi(c_0)$ for some absolute constant $K'$. Moreover, by the proof of Theorem \ref{thm: analytical plugin method},
$$
c_0 = c^{Bon}(\alpha) = \Phi^{-1}(1 - \alpha/p) \leq \sqrt{2\log(p / \alpha)}.
$$
Hence,
$$
\bar\Phi\left(c_0 / \sqrt{1 + c_0^2/n}\right) \leq \bar\Phi(c_0)(1 + K'c_0^4/n) \leq \bar\Phi(c_0)\Big(1 + 4\log^2(p/\alpha)/n\Big).
$$
Thus, given that $M_{n,3}\geq 1$ and that $\log(p/\alpha) \geq \log 4 > 1$, it follows from \eqref{eq: cond bonf} that for some constant $C''$ depending only on $C_1$,
$$
\bar\Phi\left(c_0/\sqrt{1 + c_0^2/n}\right) \leq \bar\Phi(c_0)(1 + C'' n^{-2c_1}) = (\alpha/p)(1 + C'' n^{-2c_1}).
$$
Combining this bound with \eqref{eq: bonf der 1} gives the first assertion. The second assertion follows from the first one because the constant $C$ depends only on $C_1$. This completes the proof of the theorem.
\end{proof}

\subsection{Proof of Theorem \ref{thm: analytical plugin method}}
Combining \eqref{eq: first step} with \eqref{SN critical value} shows that under the null,
$$
\Pr(T > c^{SN}(\alpha)) \leq \sum_{j=1}^p \Pr(U_j > \Phi^{-1}(1 - \alpha/p)).
$$
The first assertion thus follows immediately by applying Lemma \ref{lem: moderate deviations} to bound $\Pr(U_j > \Phi^{-1}(1 - \alpha/p))$ with $S_n/V_n = U_j$, $x = \Phi^{-1}(1 - \alpha/p)$, $\nu = 1$, and $D_{n,1} = M_{n,3}$.

To prove the second assertion, we first note the well known fact that $1-\Phi (t) \leq e^{-t^2/2}$ for $t > 0$, by which we have $\Phi^{-1}(1-\alpha/p) \leq \sqrt{2\log (p/\alpha)}$.\footnote{The inequality $1-\Phi (t) \leq e^{-t^2/2}$ for $t > 0$ can be proved by using Markov's inequality, $\Pr ( \xi > t ) \leq e^{-\lambda t} \Ep [ e^{\lambda \xi}]$ for $\lambda > 0$ with $\xi \sim N(0,1)$, and optimizing the bound with respect to $\lambda > 0$; there is a sharper inequality, namely $1-\Phi (t) \leq e^{-t^2/2}/2$ for $t > 0$ \citep[see, for example, Proposition 2.1 in][]{Dudley02}, but we do not need this sharp inequality in this paper.} 
Further, since we are assuming $p \geq 2$, $2\log (p/\alpha) \geq 1$ and thus $1+\Phi^{-1}(1-\alpha/p) \leq 2\sqrt{2\log (p/\alpha)}$. 
Hence if $M_{n,3}^{3} \log^{3/2}(p/\alpha) \leq C_{1}n^{1/2-c_{1}}$, it is straightforward to verify that $\alpha Kn^{-1/2} M_{n,3}^{3} \{ 1+\Phi^{-1}(1-\alpha/p) \}^{3}$ is bounded by $Cn^{-c_{1}}$ for some constant $C$ depending only on $C_{1}$, which gives the second assertion. The third assertion follows immediately from the second one since the constant $C$ in \eqref{eq: SN test size bound} depends only on $C_1$.

To prove the last assertion, \eqref{eq: third assertion sn critical value}, we have
\begin{align}
\Pr(T > c^{SN}(\alpha))
&=\Pr\Big( \max_{1\leq j\leq p}\frac{\sqrt n(\hat\mu_j - \mu_j)}{\hat\sigma_j} > c^{SN}(\alpha) \Big)\nonumber\\
&=1 - \prod_{1\leq j\leq p}\Pr\Big( \frac{\sqrt n(\hat \mu_j - \mu_j)}{\hat\sigma_j} \leq c^{SN}(\alpha) \Big)\nonumber\\
&=1 - \prod_{1\leq j\leq p}\Big(1 - \Pr\Big( \frac{\sqrt n(\hat \mu_j - \mu_j)}{\hat\sigma_j} > c^{SN}(\alpha) \Big)\Big)\nonumber\\
&=1 - \prod_{1\leq j\leq p}\Big(1 - \Pr\Big(U_j > \Phi^{-1}(1 - \alpha/p)\Big)\Big),\label{eq: bound under independence}
\end{align}
where the first line follows from the fact that $\mu_j = 0$ for all $j = 1,\dots,p$, the second from independence of components of $X_1$, the third from the formula for probabilities of complements, and the fourth from the definitions of $U_j$'s and $c^{SN}(\alpha)$. Now using the same arguments as those in the proof of the first two assertions, the expression in \eqref{eq: bound under independence} is bounded from below by
\begin{align*}
&1 - \prod_{1\leq j\leq p}\Big(1 - (1 - C n^{-c_1})\alpha/p \Big)=1 - \Big(1 - (1 - C n^{-c_1})\alpha/p\Big)^p\to 1 - e^{-\alpha}
\end{align*}
and from above by
\begin{align*}
&1 - \prod_{1\leq j\leq p}\Big(1 - (1 + C n^{-c_1})\alpha/p \Big)=1 - \Big(1 - (1 + C n^{-c_1})\alpha/p\Big)^p\to 1 - e^{-\alpha}
\end{align*}
since $p = p_n\to\infty$. This gives \eqref{eq: third assertion sn critical value} and completes the proof of the theorem.
\qed

\subsection{Proof of Theorem \ref{thm: analytical rms method}}
We first prove the following technical lemma. Recall that $B_{n} =( \Ep [ \max_{1 \leq j \leq p} Z_{1j}^{4} ])^{1/4}$.
\begin{lemma}
\label{lem: variance}
For every $0 < c < 1$,
\[
\Pr \left ( \max_{1 \leq j \leq p} | \hat{\sigma}_{j}/\sigma_{j} -1 | >  K (n^{-(1-c)/2} B_{n}^{2} \log p + n^{-3/2} B_{n}^{2} \log^{2} p) \right ) \leq K' n^{-c},
\]
where $K,K'$ are universal constants.
\end{lemma}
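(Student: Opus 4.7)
The plan is to first reduce the problem to bounding $\max_{j}|\hat{\sigma}_{j}^{2}/\sigma_{j}^{2}-1|$, then apply Lemmas \ref{lem: fuk-nagaev} and \ref{lem: maximal ineq} to two natural pieces of that quantity. Since $\hat{\sigma}_j\geq 0$, the elementary inequality $|\sqrt{a}-1|\leq|a-1|$ for $a\geq 0$ gives $|\hat{\sigma}_{j}/\sigma_{j}-1|\leq|\hat{\sigma}_{j}^{2}/\sigma_{j}^{2}-1|$, so it is enough to control the squared version. Setting $Z_{ij}:=(X_{ij}-\mu_{j})/\sigma_{j}$ so that $\Ep[Z_{1j}]=0$ and $\Ep[Z_{1j}^{2}]=1$, a direct expansion of the sample variance yields
\[
\hat{\sigma}_{j}^{2}/\sigma_{j}^{2}-1=\bigl(\En[Z_{ij}^{2}]-1\bigr)-\bigl(\En[Z_{ij}]\bigr)^{2},
\]
and it suffices to bound $\max_{j}|\En[Z_{ij}^{2}]-1|$ and $\max_{j}(\En[Z_{ij}])^{2}$ in probability.

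For $\max_{j}|\En[Z_{ij}^{2}]-1|$ I would apply Lemma \ref{lem: fuk-nagaev} with $s=2$ to the centered variables $(Z_{ij}^{2}-1)_{j=1}^{p}$. The Fuk--Nagaev variance parameter satisfies $\sigma^{2}=\max_j\sum_i\Ep[(Z_{ij}^{2}-1)^{2}]\leq nB_{n}^{4}$, the polynomial control term satisfies $\sum_i\Ep[\max_j(Z_{ij}^{2}-1)^{2}]\leq 3nB_{n}^{4}$ (using $(x-1)^{2}\leq 2x^{2}+2$), and Lemma \ref{lem: maximal ineq} yields $\Ep[\max_j|\sum_i(Z_{ij}^{2}-1)|]\leq K\sqrt{n}\,B_{n}^{2}\log p$. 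Choosing the Fuk--Nagaev deviation parameter $t$ of order $B_{n}^{2}n^{(1+c)/2}$ --- which drives the binding polynomial tail to $O(n^{-c})$ while the exponential tail becomes negligible --- and dividing by $n$, gives
\[
\max_{j}|\En[Z_{ij}^{2}]-1|\leq K\bigl(B_{n}^{2}n^{-1/2}\log p+B_{n}^{2}n^{-(1-c)/2}\bigr)
\]
with probability at least $1-K'n^{-c}/2$. An exactly parallel argument applied to $(Z_{ij})_{j=1}^{p}$ itself --- now with Fuk--Nagaev variance $n$ and $\sqrt{\Ep[\max_{i,j}Z_{ij}^{2}]}\leq\sqrt{n}\,B_{n}$ --- yields $\max_j|\En[Z_{ij}]|\leq K(B_n n^{-1/2}\log p+B_n n^{-(1-c)/2})$ on an analogous event, so that $\max_j(\En[Z_{ij}])^{2}\leq 2K^{2}B_{n}^{2}(n^{-1}\log^{2}p+n^{-(1-c)})$ after squaring.

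Combining the two estimates by the triangle inequality and absorbing subdominant terms (via $n^{-1/2}\log p\leq n^{-(1-c)/2}\log p$ and $n^{-(1-c)}\leq n^{-(1-c)/2}\log p$ for $p\geq 2$, $c\in(0,1)$) delivers a bound of the asserted form on an event of probability at least $1-K'n^{-c}$. The main technical obstacle will be the calibration of the Fuk--Nagaev deviation parameter $t$ in each of the two applications: the polynomial tail $\propto 1/t^{2}$ is the binding constraint in this regime, and it is precisely this that produces the $n^{-(1-c)/2}$ leading scale (rather than the tighter $\sqrt{\log n/n}$ one would see in a purely Gaussian setting). A secondary point is bookkeeping for the $\log^{2}p$ correction: the natural route above gives the contribution of $(\En[Z_{ij}])^{2}$ at the scale $B_n^{2}n^{-1}\log^{2}p$, and matching exactly the $n^{-3/2}\log^{2}p$ factor in the statement (if needed) would require a sharpening that exploits the Gaussian portion of Fuk--Nagaev in the regime where the polynomial tail is slack; this refinement is inessential since the $n^{-(1-c)/2}B_n^{2}\log p$ term already dominates in all applications of the lemma in the paper.
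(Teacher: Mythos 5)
Your strategy is the same as the paper's: reduce to $\max_j|\hat{\sigma}_j^2/\sigma_j^2-1|$ via $|\sqrt{a}-1|\le|a-1|$, use the identity $\hat{\sigma}_j^2/\sigma_j^2-1=(\En[Z_{ij}^2]-1)-(\En[Z_{ij}])^2$, and control each piece by Lemma \ref{lem: maximal ineq} followed by Lemma \ref{lem: fuk-nagaev}. Your treatment of $\max_j|\En[Z_{ij}^2]-1|$ matches the paper's exactly (the paper writes the deviation parameter as $t=n^{-(1-c)/2}B_n^2$ for the average, which is your $n^{(1+c)/2}B_n^2$ for the sum), and your use of $s=2$ rather than the paper's $s=4$ for the linear piece is immaterial, since a probability bound of order $n^{-c}$ suffices there.

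The one place where your argument falls short of the lemma \emph{as stated} is the crude bound $\sqrt{\Ep[\max_{i,j}Z_{ij}^2]}\le\sqrt{n}\,B_n$ in the linear piece. This yields $\Ep[\max_j|\En[Z_{ij}]|]\lesssim n^{-1/2}\sqrt{\log p}+n^{-1/2}B_n\log p$, whose square contributes a term of order $n^{-1}B_n^2\log^2 p$; absorbing that into $n^{-(1-c)/2}B_n^2\log p$ requires $\log p\le n^{(1+c)/2}$, which the lemma does not assume (it is a non-asymptotic statement for arbitrary $p\ge 2$, $n\ge 2$ with universal constants). You flag this mismatch but misattribute the needed refinement to the Gaussian part of Fuk--Nagaev; in fact the fix is the sharper moment bound $\Ep[M^2]\le(\Ep[M^4])^{1/2}\le(\sum_{i}\Ep[\max_jZ_{ij}^4])^{1/2}\le\sqrt{n}\,B_n^2$, i.e.\ $\sqrt{\Ep[M^2]}\le n^{1/4}B_n$, inside Lemma \ref{lem: maximal ineq}. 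That gives $\Ep[\max_j|\En[Z_{ij}]|]\lesssim n^{-1/2}\sqrt{\log p}+n^{-3/4}B_n\log p$ --- exactly display (\ref{eq: exp of maximum of emp proc}) in the paper --- and squaring produces precisely the $n^{-3/2}B_n^2\log^2 p$ term in the statement. With that one-line correction your proof coincides with the paper's; and you are right that your weaker version would still suffice everywhere the lemma is invoked, since the surrounding theorems assume $B_n^2\log^2(pn)\lesssim n^{1/2}$.
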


\begin{proof}
Here $K_{1},K_{2},\dots$ denote universal positive constants.
Note that for $a > 0$, $| \sqrt{a} - 1 | = | a-1 |/(\sqrt{a} + 1) \leq | a-1 |$,
so that for $r>0$,
\[
\Pr \left ( \max_{1 \leq j \leq p} | \hat{\sigma}_{j}/\sigma_{j}-1 | > r \right ) \leq \Pr \left ( \max_{1 \leq j \leq p} | \hat{\sigma}^{2}_{j}/\sigma_{j}^{2}- 1| > r \right ).
\]
Using the expression $\hat{\sigma}_{j}^{2}/\sigma_{j}^{2} -1 = (\En [ Z_{ij}^{2} ] - 1) -( \En [ Z_{ij} ])^{2}$,
we have
\begin{align*}
&\Pr \left ( \max_{1 \leq j \leq p} | \hat{\sigma}_{j}^{2}/\sigma_{j}^{2}- 1| > r  \right) \\
&\quad \leq \Pr \left ( \max_{1 \leq j \leq p} |\En [ Z_{ij}^{2} ] - 1| > r/2 \right ) + \Pr \left ( \max_{1 \leq j \leq p} | \En [ Z_{ij}] | > \sqrt{r/2}  \right ).
\end{align*}
We wish to bound the two terms on the right-hand side by using the Fuk-Nagaev inequality (Lemma \ref{lem: fuk-nagaev}) combined with the maximal inequality in Lemma \ref{lem: maximal ineq}.

By Lemma \ref{lem: maximal ineq} (with the crude bounds $\Ep [ Z_{1j}^{4} ] \leq B_{n}^{4}$ and $\Ep [ \max_{i,j} Z_{ij}^{4} ] \leq nB_{n}^{4}$), we have
\[
\Ep \left [  \max_{1 \leq j \leq p} |\En [ Z_{i j}^{2} ] - 1|  \right ] \leq K_{1} B_{n}^{2} (\log p)/\sqrt{n},
\]
so that by Lemma \ref{lem: fuk-nagaev}, for every $t > 0$,
\[
\Pr \left ( \max_{1 \leq j \leq p} |\En [ Z_{ij}^{2} ] - 1| >  \frac{2K_{1}  B_{n}^{2} \log p}{\sqrt{n}} + t \right ) \leq e^{-nt^{2}/(3B_{n}^{4})} + K_{2}t^{-2} n^{-1} B_{n}^{4}.
\]
Taking $t = n^{-(1-c)/2}B_{n}^{2}$ with $0 < c <1$, the right-hand side becomes
$e^{-n^{c}/3} + K_{2} n^{-c} \leq K_{3} n^{-c}$. Hence we have
\begin{equation}
\Pr \left ( \max_{1 \leq j \leq p} |\En [ Z_{ij}^{2} ] - 1| >  K_{4} n^{-(1-c)/2} B_{n}^{2} (\log p) \right ) \leq K_{3} n^{-c}. \label{var-step1}
\end{equation}

Similarly, using Lemma \ref{lem: maximal ineq}, we have
\begin{equation}\label{eq: exp of maximum of emp proc}
\Ep \left [  \max_{1 \leq j \leq p} |\En [ Z_{ij} ] |  \right ] \leq K_{5} ( n^{-1/2}  \sqrt{\log p} + n^{-3/4} B_{n} \log p),
\end{equation}
so that by Lemma \ref{lem: fuk-nagaev}, for every $t > 0$,
\begin{align*}
&\Pr \left ( \max_{1 \leq j \leq p} | \En [ Z_{ij}] | > 2K_{5} ( n^{-1/2} \sqrt{\log p} + n^{-3/4} B_{n} \log p)+ t \right ) \\
&\quad \leq e^{-nt^{2}/3} + K_{6} t^{-4} n^{-3} B_{n}^{4}.
\end{align*}
Taking $t=n^{-1/4}B_{n}$, the right-hand side becomes
$e^{-n^{1/2}B_{n}/3} + K_{6} n^{-2} \leq K_{7} n^{-2}$.
Hence we have
\begin{equation}
\Pr \left ( \max_{1 \leq j \leq p} | \En [ Z_{ij}] | >  K_{8}  ( n^{-1/4} B_{n} \sqrt{\log p} + n^{-3/4} B_{n} \log p) \right ) \leq K_{7}n^{-2}. \label{var-step2}
\end{equation}
Combining (\ref{var-step1}) and (\ref{var-step2}) leads to the desired result.
\end{proof}

\begin{proof}[Proof of Theorem \ref{thm: analytical rms method}]
Here $c,C$ denote generic positive constants depending only on $\alpha,c_{1},C_{1}$; their values may change from place to place. Define
\begin{equation}
J_{1}=\{j \in \{ 1,\dots,p \} : \sqrt{n}\mu_{j}/\sigma_j>-c^{SN}(\beta_{n})\}, \ J_{1}^{c} = \{ 1,\dots, p\} \backslash J_{1}.  \label{eq: true SN set}
\end{equation}
For $k\geq 1$, let
\[
c^{SN,2S}(\alpha,k)=\frac{\Phi^{-1}(1-(\alpha-2\beta_{n})/k)}{\sqrt{1-\Phi^{-1}(1-(\alpha-2\beta_{n})/k)^2/n}}.
\]
Note that $c^{SN,2S}(\alpha)=c^{SN,2S}(\alpha,\hat{k})$ when $\hat{k} \geq 1$. We divide the proof into several steps.

\medskip

{\bf Step 1}. We wish to prove that with probability larger than $1-\beta_{n} - Cn^{-c}$,
$\hat{\mu}_{j} \leq 0$ for all $j \in J_{1}^{c}$.

\medskip

Observe that
\[
\hat{\mu}_{j} > 0\text{ for some } j \in J_{1}^{c} \Rightarrow \max_{1\leq j\leq p}\sqrt{n}(\hat{\mu}_{j} - \mu_{j})/\sigma_j >  c^{SN}(\beta_{n}),
\]
so that it is enough to prove that
\begin{equation}
\Pr \left (\max_{1 \leq j \leq p} \sqrt{n}(\hat{\mu}_{j} - \mu_{j})/\sigma_j > c^{SN}(\beta_{n})  \right ) \leq \beta_{n} + Cn^{-c}.
\label{step1}
\end{equation}
Since whenever  $\sigma_{j}/\hat{\sigma}_{j} -1\geq -r$ for some $0 < r < 1$,
\[
\sigma_{j} = \hat{\sigma}_{j} ( 1+ (\sigma_{j}/\hat{\sigma}_{j}-1)) \geq \hat{\sigma}_{j} (1-r),
\]
the left-hand side of (\ref{step1}) is bounded by
\begin{align}
&\Pr\left( \max_{1 \leq j \leq p} \sqrt{n} (\hat{\mu}_{j}-\mu_{j})/\hat{\sigma}_{j} > (1-r) c^{SN}(\beta_{n}) \right)  \label{eq: ineq1} \\
&\quad + \Pr \left ( \max_{1 \leq j \leq p} | (\sigma_{j}/\hat{\sigma}_{j})-1 | > r \right ),  \label{ineq2}
\end{align}
where $0 < r < 1$ is arbitrary.

Take $r=r_{n} = n^{-(1-c_{1})/2}B_{n}^{2} \log p$. Then $r_{n} < 1$ for large $n$, and since
\[
|a-1| \leq \frac{r}{r+1}  \Rightarrow | a^{-1} -1 | \leq r,
\]
we see that by Lemma \ref{lem: variance},
the probability in (\ref{ineq2}) is bounded by $Cn^{-c}$.

Consider the probability in (\ref{eq: ineq1}). It is not difficult to see that
\begin{align}
&\Pr\left( \max_{1 \leq j \leq p} \sqrt{n} (\hat{\mu}_{j}-\mu_{j})/\hat{\sigma}_{j} > (1-r) c^{SN}(\beta_{n}) \right) \notag\\
&\quad \leq \Pr \left (\max_{1 \leq j \leq p} U_{j} > (1-r) \Phi^{-1}(1-\beta_{n}/p) \right ) \notag\\
& \quad\leq \sum_{j=1}^{p}  \Pr \left ( U_{j} > (1-r) \Phi^{-1}(1-\beta_{n}/p) \right ).\label{eq: ineq3}
\end{align}
Note that $(1-r)\Phi^{-1}(1-\beta_{n}/p) \leq \sqrt{2 \log (p/\beta_{n})} \leq n^{1/6}/M_{n,3}$ for large $n$. Hence,
by  Lemma \ref{lem: moderate deviations}, the sum in (\ref{eq: ineq3}) is bounded by
\begin{align*}
&p\overline{\Phi} \left ((1-r)\Phi^{-1}(1-\beta_{n}/p) \right ) \left [ 1+ n^{-1/2}C M_{n,3}^{3} \left \{ 1+(1-r)\Phi^{-1}(1-\beta_{n}/p)  \right \}^{3} \right ] \\
&\quad\leq p\overline{\Phi} \left ((1-r)\Phi^{-1}(1-\beta_{n}/p) \right )  \left [ 1+ n^{-1/2}C M_{n,3}^{3} \{1+\Phi^{-1}(1-\beta_{n}/p) \}^{3} \right ].
\end{align*}
Observe that $n^{-1/2} M_{n,3}^{3} \{1+\Phi^{-1}(1-\beta_{n}/p) \}^{3} \leq C n^{-c_{1}}$. Moreover, putting $\xi = \Phi^{-1}(1-\beta_{n}/p)$, we have by Taylor's expansion for some $r'\in[0,r]$,
\begin{align*}
 p \overline{\Phi} \left ((1-r)\xi \right ) &= \beta_{n} + r p \xi \phi \left ((1-r')\xi \right ) \leq \beta_{n} + r p\xi \phi \left ((1-r)\xi \right ).
 \end{align*}
 Using the inequality $(1-r)^{2} \xi^{2} = \xi^{2} + r^{2}\xi^{2} - 2r\xi^{2} \geq \xi^{2} - 2r\xi^{2}$,
 we have $ \phi \left ((1-r)\xi \right ) \leq e^{ r \xi^{2} } \phi (\xi)$.
 Since $\beta_{n} < \alpha/2 < 1/4$ and $p \geq 2$, we have $\xi \geq \Phi^{-1}(1-1/8) > 1$,
so that by Proposition 2.1 in \cite{Dudley02},  we have $\phi (\xi) \leq 2 \xi (1-\Phi (\xi)) =2 \xi \beta_{n}/p$.\footnote{Note that the second part of Proposition 2.1 in \cite{Dudley02} asserts that  $\phi(t)/t \leq \Pr ( | N(0,1) | > t) = 2 (1-\Phi (t))$ when $t \geq 1$, so that $\phi (t) \leq 2 t(1-\Phi(t))$.} Hence
\[
 p \overline{\Phi} \left ((1-r)\xi \right ) \leq \beta_{n} (1+2r\xi^{2} e^{r \xi^{2}}).
\]
Recall that we have taken $r=r_{n}= n^{-(1-c_{1})/2}B_{n}^{2} \log p$, so that
\[
r\xi^{2}  \leq 2 n^{-(1-c_{1})/2} B_{n}^{2} \log^{2}( p / \beta_{n}) \leq C n^{-c_{1}/2}.
\]
Therefore, the probability in (\ref{eq: ineq1}) is bounded by $\beta_{n} + Cn^{-c}$ for large $n$.
The conclusion of Step 1 is verified for large $n$ and hence for all $n$ by adjusting the constant $C$.

\medskip

{\bf Step 2}. We wish to prove that with probability larger than $1-\beta_{n} - Cn^{-c}$, $\hat{J}_{SN}  \supset J_{1}$.

\medskip

Observe that
\begin{equation}\label{ineq}
\Pr (\hat{J}_{SN} \not \supset J_{1}) \leq \Pr\left(\max_{1\leq j\leq p}\left[\sqrt{n}(\mu_{j}-\hat{\mu}_{j})-(2\hat{\sigma}_{j}-\sigma_{j})c^{SN}(\beta_{n})\right] > 0 \right).
\end{equation}
Since whenever $1-\sigma_{j}/\hat{\sigma}_{j} \geq -r$ for some $0 < r < 1$,
\[
2\hat{\sigma}_j-\sigma_j = \hat{\sigma}_{j} (1 + (1-\sigma_{j}/\hat{\sigma}_{j})) \geq \hat{\sigma}_{j} ( 1-r ),
\]
the right-hand side on (\ref{ineq}) is bounded by
\begin{align*}
&\Pr\left( \max_{1 \leq j \leq p} \sqrt{n} (\mu_{j}-\hat{\mu}_{j})/\hat{\sigma}_{j} > (1-r) c^{SN}(\beta_{n}) \right)  \notag \\
&\quad + \Pr \left ( \max_{1 \leq j \leq p} | (\sigma_{j}/\hat{\sigma}_{j})-1 | > r \right ),
\end{align*}
where $0 < r < 1$ is arbitrary.
By the proof of Step 1, we see that the sum of these terms is bounded by $\beta_{n}+Cn^{-c}$ with suitable $r$,
which leads to the conclusion of Step 2.

\medskip

{\bf Step 3}. We are now in position to prove \eqref{eq: thm 42 first bound}. Consider first the case where $J_{1} = \emptyset$. Then by Step 1, with probability larger than $1-\beta_{n}-C n^{-c}$, $T \leq 0$, so that
$$
\Pr (T > c^{SN,2S}(\alpha) ) \leq \beta_{n} + C n^{-c} \leq \alpha + C n^{-c}.
$$
Suppose now that $| J_{1} | \geq 1$.
Observe that
\[
\{ T > c^{SN,2S}(\alpha) \} \cap \left\{ \max_{j\in J_1^c}\hat{\mu}_{j} \leq 0 \right\}
\subset \left\{ \max_{j \in J_{1}} \sqrt{n}\hat{\mu}_{j}/\hat{\sigma}_{j} >  c^{SN,2S}(\alpha)  \right\}.
\]
Moreover, as $c^{SN,2S}(\alpha,k)$ is non-decreasing in $k$,
\begin{multline*}
\left\{ \max_{j \in J_{1}} \sqrt{n}\hat{\mu}_{j}/\hat{\sigma}_{j} >  c^{SN,2S}(\alpha)  \right\} \cap \{ \hat{J}_{SN} \supset J_{1} \} \\
\subset \left\{ \max_{j \in J_{1}} \sqrt{n}\hat{\mu}_{j}/\hat{\sigma}_{j} >  c^{SN,2S}(\alpha, | J_{1} |)  \right\}.
\end{multline*}
Therefore, by Steps 1 and 2, we have
\begin{align}
&\Pr(T>c^{SN,2S}(\alpha)) \notag \\
&\quad \leq \Pr\left(\max_{j\in J_{1}}\sqrt{n}\hat{\mu}_{j}/\hat{\sigma}_{j} > c^{SN,2S}(\alpha,|J_{1}|)\right)+2\beta_{n}+C n^{-c} \notag \\
&\quad \leq \Pr\left(\max_{j\in J_{1}}\sqrt{n}(\hat{\mu}_{j}-\mu_{j})/ \hat{\sigma}_{j} > c^{SN,2S}(\alpha,|J_{1}|)\right)+2\beta_{n}+C n^{-c}. \label{step3-1}
\end{align}
By Theorem \ref{thm: analytical plugin method}, we see that
\begin{equation}
\Pr\left(\max_{j\in J_{1}}\sqrt{n}(\hat{\mu}_{j}-\mu_{j})/\hat{\sigma}_{j} > c^{SN,2S}(\alpha,|J_{1}|)\right) \leq \alpha - 2\beta_{n} + Cn^{-c}, \label{step3-2}
\end{equation}
where the condition \eqref{eq: cond1} of Theorem \ref{thm: analytical plugin method},
$$
M_{n,3}^{3} \log^{3/2} (p/\alpha) \leq C_{1}n^{1/2-c_{1}},
$$
is now replaced by
$$
M_{n,3}^{3} \log^{3/2} (p/(\alpha - 2\beta_n)) \leq C_{1}n^{1/2-c_{1}},
$$
which is assumed in \eqref{eq: cond2}. Combining (\ref{step3-1}) and (\ref{step3-2}) gives \eqref{eq: thm 42 first bound}.

\medskip

{\bf Step 4}. Finally, we prove \eqref{eq: thm42 additional bound}. Since $\mu_j = 0$ for all $j=1,\dots,p$, it follows that $J_1 = \{1,\dots,p\}$, and so by Step 2, $\hat k = p$ and $c^{SN,2S}(\alpha) = c^{SN,2S}(\alpha,p) = c^{SN}(\alpha - 2\beta_n)$ with probability larger than $1 - \beta_n - C n^{-c} = 1 - o(1)$ since $\beta_n\to0$. Therefore,
\begin{align*}
\Pr(T > c^{SN,2S}(\alpha)) 
&= \Pr\left(\max_{1\leq j\leq p}\sqrt n(\hat\mu_j - \mu_j)/\hat\sigma_j > c^{SN,2S}(\alpha)\right)\\
&= \Pr\left(\max_{1\leq j\leq p}\sqrt n(\hat\mu_j - \mu_j)/\hat\sigma_j > c^{SN}(\alpha - 2\beta_n)\right) + o(1)\\
&= 1 - e^{-(\alpha - 2\beta_n)} + o(1) \to 1 - e^{-\alpha}
\end{align*}
as in the proof of Theorem \ref{thm: analytical plugin method}. This completes the proof of the theorem.
\end{proof}

\subsection{Proof of Theorem \ref{thm: simulation plugin method}}
Here $c,C$ denote generic positive constants depending only on $c_{1},C_{1}$; their values may change from place to place. Let $W$ stand for $W^{MB}$ or $W^{EB}$, depending on which bootstrap procedure is used.
Define
\[
\bar{T}:=\max_{1\leq j\leq p}\frac{\sqrt{n}(\hat{\mu}_j-\mu_{j})}{\hat{\sigma}_j}, \ \text{and} \ T_0:=\max_{1\leq j\leq p}\frac{\sqrt{n}(\hat{\mu}_j-\mu_{j})}{\sigma_j}.
\]
In addition, define
\[
\bar{W}^{MB}:=\max_{1\leq j\leq p}\frac{\sqrt{n}\En[\epsilon_{i}(X_{ij}-\hat{\mu}_j)]}{\sigma_j}, \ \bar{W}^{EB}:=\max_{1\leq j\leq p}\frac{\sqrt{n}\En[(X_{i j}^*-\hat{\mu}_j)]}{\sigma_j},
\]
and let $\bar{W}$ stand for $\bar{W}^{MB}$ or $\bar{W}^{EB}$ depending on which bootstrap procedure is used.
Further, let
\[
(Y_1,\dots,Y_p)^T\sim N(0, \Ep[Z_1 Z_1^T])
\]
and for $\gamma\in(0,1)$, denote by $c_0(\gamma)$ the $(1-\gamma)$-quantile of the distribution of $\max_{1\leq j\leq p}Y_j$. Finally, define
\begin{align*}
&\rho_n:=\sup_{t\in\mathbb{R}}\left|\Pr(T_0\leq t)-\Pr\left(\max_{1\leq j\leq p}Y_j\leq t\right)\right|,\\
&\rho_n^B:=\sup_{t\in\mathbb{R}}\left|\Pr(\bar{W}\leq t\mid X_1^n)-\Pr\left(\max_{1\leq j\leq p}Y_j\leq t\right)\right|.
\end{align*}
Observe that under the present assumptions, we may apply Proposition 2.1 in \cite{CCK14} so that we have
\begin{equation}\label{eq: rho bound induction}
\rho_n\leq C n^{-c}; 
\end{equation}
while applying Corollary 4.2 and Proposition 4.3 in \cite{CCK14} to the MB and EB procedures, respectively, we have
for some  $\nu_n:=C n^{-c}$,
\begin{equation}\label{eq: rho bootstrap bound}
\Pr(\rho_n^B < \nu_n)\geq 1-C n^{-c}.
\end{equation}
We divide the rest of  the proof into three steps. Step 1 establishes a relation between $c^B(\cdot)$ and $c_0(\cdot)$. Step 2 proves the assertion of the theorem. Step 3 provides auxiliary calculations. In particular, Step 3 shows that for some $\zeta_{n1}$ and $\zeta_{n2}$ satisfying $\zeta_{n1}\sqrt{\log p}+\zeta_{n2}\leq C n^{-c}$, we have
\begin{align}
&\Pr(|\bar{T}-T_0|>\zeta_{n1})\leq C n^{-c},\label{eq: T approximation 1}\\
&\Pr(\Pr(|W-\bar{W}|>\zeta_{n1}\mid X_1^n)>\zeta_{n2})\leq C n^{-c}. \label{eq: W approximation 1}
\end{align}

\medskip

{\bf Step 1}.  We wish to prove that
\begin{align}
&\Pr(c^B(\alpha)\geq c_0(\alpha+\zeta_{n2}+\nu_n+8\zeta_{n1}\sqrt{\log p}))\geq 1 - C n^{-c},\label{eq: critical value lower bound 1}\\
&\Pr(c^B(\alpha)\leq c_0(\alpha-\zeta_{n2}-\nu_n-8\zeta_{n1}\sqrt{\log p}))\geq 1 - C n^{-c}.\label{eq: critical value upper bound 1}
\end{align}

To establish (\ref{eq: critical value lower bound 1}), observe that for any $t\in \R$,
\begin{align}
&\Pr(W\leq t\mid X_1^n)\leq \Pr(\bar{W}\leq t + \zeta_{n1} \mid X_1^n) + \Pr(|W-\bar{W}|>\zeta_{n1}\mid X_1^n)\label{eq: raw 1a}\\
&\quad  \leq \Pr\left(\max_{1\leq j\leq p}Y_j\leq t + \zeta_{n1}\right) + \rho_n^B + \Pr(|W-\bar{W}|>\zeta_{n1}\mid X_1^n). \label{eq: raw 1b}
\end{align}
By Lemma \ref{lem: critical value bound}, for any $\gamma\in(0,1-8\zeta_{n1}\sqrt{\log p})$ (note that $1-8\zeta_{n1}\sqrt{\log p}>0$ for sufficiently large $n$),
\begin{align*}
&\Pr\left(\max_{1\leq j\leq p}Y_j\leq c_0(\gamma+8\zeta_{n1}\sqrt{\log p})+\zeta_{n1}\right)\\
&\quad \leq \Pr\left(\max_{1\leq j\leq p}Y_j\leq c_0(\gamma+8\zeta_{n1}\sqrt{\log p})\right) + 2\zeta_{n1}(\sqrt{2\log p}+1)\\
&\quad \leq \Pr\left(\max_{1\leq j\leq p}Y_j\leq c_0(\gamma+8\zeta_{n1}\sqrt{\log p})\right) + 8\zeta_{n1}\sqrt{\log p}\\
&\quad = 1 - \gamma - 8\zeta_{n1}\sqrt{\log p} + 8\zeta_{n1}\sqrt{\log p} = 1-\gamma,
\end{align*}
where the third line follows from $p\geq 2$, so that $\sqrt{2\log p}\geq 1$, and the fourth line from the fact that the distribution of $\max_{1\leq j\leq p}Y_j$ has no point masses. 
Hence
\begin{equation}\label{eq: comparison of critical values}
c_0(\gamma+8\zeta_{n1}\sqrt{\log p})+\zeta_{n1}\leq c_0(\gamma).
\end{equation}
Therefore, setting $t=c_0(\alpha+\zeta_{n2}+\nu_n+8\zeta_{n1}\sqrt{\log p})$ in (\ref{eq: raw 1a})-(\ref{eq: raw 1b}), we obtain 
\begin{align*}
&\Pr(W\leq c_0(\alpha+\zeta_{n2}+\nu_n+8\zeta_{n1}\sqrt{\log p})\mid X_1^n)\\
&\quad \leq 1-\alpha-\zeta_{n2}-\nu_n+\rho_n^B+\Pr(|W-\bar{W}|>\zeta_{n1}\mid X_1^n)<1-\alpha
\end{align*}
on the event that $\rho_n^B < \nu_n$ and $\Pr(|W-\bar{W}|>\zeta_{n1}\mid X_1^n)\leq \zeta_{n2}$, which holds with probability larger than $1-C n^{-c}$  by (\ref{eq: rho bootstrap bound}) and (\ref{eq: W approximation 1}). This implies (\ref{eq: critical value lower bound 1}). By a similar argument, we can establish that (\ref{eq: critical value upper bound 1}) holds as well. This completes Step 1.

\medskip

{\bf Step 2}. Here we prove the asserted claims. 
Observe that under $H_0$,
\begin{align*}
&\Pr(T>c^B(\alpha))\leq \Pr(\bar{T}>c^B(\alpha))\\
&\quad \leq\Pr(T_0>c^B(\alpha)-\zeta_{n1})+\Pr(|\bar{T}-T_0|>\zeta_{n1})\\
&\quad \leq \Pr(T_0>c_0(\alpha+\zeta_{n2}+\nu_n+8\zeta_{n1}\sqrt{\log p})-\zeta_{n1})+C n^{-c}\\
&\quad \leq \Pr(T_0>c_0(\alpha+\zeta_{n2}+\nu_n+16\zeta_{n1}\sqrt{\log p}))+C n^{-c}\\
&\quad \leq \Pr(\max_{1\leq j\leq p}Y_j>c_0(\alpha+\zeta_{n2}+\nu_n+16\zeta_{n1}\sqrt{\log p}))+\rho_n+C n^{-c}\\
&\quad = \alpha+\zeta_{n2}+\nu_n+16\zeta_{n1}\sqrt{\log p}+\rho_n+C n^{-c}\leq \alpha + C n^{-c},
\end{align*}
where the third line follows from (\ref{eq: T approximation 1}) and (\ref{eq: critical value lower bound 1}), the fourth line from (\ref{eq: comparison of critical values}), and the last line from (\ref{eq: rho bound induction}) and construction of $\nu_n$, $\zeta_{n1}$, and $\zeta_{n2}$. Hence, (\ref{eq: simulation plugin control}) follows. To prove (\ref{eq: simulation plugin control2}), 
observe that when $\mu_j=0$ for all $1\leq j \leq p$, $T=\bar{T}$, and so
\begin{align*}
&\Pr(T>c^B(\alpha))= \Pr(\bar{T}>c^B(\alpha))\\
&\quad \geq\Pr(T_0>c^B(\alpha)+\zeta_{n1})-\Pr(|\bar{T}-T_0|>\zeta_{n1})\\
&\quad \geq \Pr(T_0>c_0(\alpha-\zeta_{n2}-\nu_n-8\zeta_{n1}\sqrt{\log p})+\zeta_{n1})-C n^{-c}\\
&\quad \geq \Pr(T_0>c_0(\alpha-\zeta_{n2}-\nu_n-16\zeta_{n1}\sqrt{\log p}))-C n^{-c}\\
&\quad \geq \Pr(\max_{1\leq j\leq p}Y_j>c_0(\alpha-\zeta_{n2}-\nu_n-16\zeta_{n1}\sqrt{\log p}))-\rho_n-C n^{-c}\\
&\quad = \alpha-\zeta_{n2}-\nu_n-16\zeta_{n1}\sqrt{\log p}-\rho_n-C n^{-c}\geq \alpha - C n^{-c},
\end{align*}
where the third line follows from (\ref{eq: T approximation 1}) and (\ref{eq: critical value upper bound 1}), the fourth line from (\ref{eq: comparison of critical values}), and the equality in the last line from the fact that the distribution of $\max_{1\leq j\leq p}Y_j$ has no point masses. Hence (\ref{eq: simulation plugin control2}) follows. This completes Step 2.

\medskip

{\bf Step 3}. We wish to prove (\ref{eq: T approximation 1}) and (\ref{eq: W approximation 1}).
We wish to verify these conditions with
\[
\zeta_{n1}:=n^{-(1-c_{1})/2} B_{n}^{2} \log^{3/2} p,  \ \text{and} \  \zeta_{n2}:=C'n^{-c'},
\]
where $c', C'$ are suitable positive constants that depend only on $c_{1},C_{1}$.
We note that because of the assumption that $B_{n}^{2} \log^{7/2} (p n) \leq C_{1}n ^{1/2-c_{1}}$, these choices satisfy $\zeta_{n1}\sqrt{\log p}+\zeta_{n2}\leq Cn^{-c}$.

We first verify (\ref{eq: T approximation 1}). Observe that
\[
| \bar{T}-T_{0} | \leq \max_{1 \leq j \leq p} | (\sigma_{j}/\hat{\sigma}_{j}) - 1 | \times \max_{1 \leq j \leq p} | \sqrt{n} \En[ Z_{ij}] |.
\]
By Lemma \ref{lem: variance} and the simple fact that $|a-1| \leq r/(r+1) \Rightarrow |a^{-1}-1| \leq r$ ($r > 0$), we have
\begin{equation}\label{eq: sigma estimation bound}
\Pr \left ( \max_{1 \leq j \leq p} | (\sigma_{j}/\hat{\sigma}_{j}) - 1 | > n^{-1/2+c_{1}/4} B_{n}^{2}\log p \right ) \leq Cn^{-c}.
\end{equation}
Moreover, by Markov's inequality and (\ref{eq: exp of maximum of emp proc}),
\[
\Pr \left( \max_{1 \leq j \leq p} | \sqrt{n} \En[ Z_{ij}] | > n^{c_{1}/4}  \sqrt{\log p} \right) \leq Cn^{-c}.
\]
Hence (\ref{eq: T approximation 1}) is verified (note that $n^{-1/2+c_{1}/4} B_{n}^{2}(\log p) \times n^{c_{1}/4} \sqrt{\log p}=\zeta_{n1}$).

To verify (\ref{eq: W approximation 1}), let $A_{n}$ be the event such that
\[
A_{n} := \left \{ \max_{1 \leq j \leq p} | (\hat{\sigma}_{j}/\sigma_{j}) - 1 | \leq (n^{-1/2+c_{1}/4} B_{n}^{2}\log p) \wedge (1/4) \right \}.
\]
We have seen that $\Pr (A_{n}) > 1-Cn^{-c}$. We consider MB and EB procedures separately. 

Consider the MB procedure first, so that $W=W^{MB}$ and $\bar{W}=\bar{W}^{MB}$.
Observe that
\[
| W^{MB} - \bar{W}^{MB} | \leq \max_{1 \leq j \leq p} | (\hat{\sigma}_{j}/\sigma_{j}) - 1 | \times | W^{MB} |.
\]
Conditional on the data $X_{1}^{n}$, the vector $(\sqrt{n} \En[ \epsilon_{i}(X_{ij}-\hat{\mu}_{j})/\hat{\sigma}_{j} ])_{1 \leq j \leq p}$ is normal with mean zero and all the diagonal elements of the covariance matrix are one. Hence $\Ep [ | W^{MB} | \mid X_{1}^{n} ] \leq  \sqrt{2\log (2p)}$,
so that by Markov's inequality, on the event $A_{n}$,
\[
\Pr ( | W^{MB}-\bar{W}^{MB} | > \zeta_{n1} \mid X_{1}^{n} ) \leq (1/\zeta_{n1}) \max_{1 \leq j \leq p} | (\hat{\sigma}_{j}/\sigma_{j})-1 | \times \Ep[|W^{MB}| \mid X_{1}^{n}],
\]
which is bounded by $Cn^{-c_{1}/4}$, so that (\ref{eq: W approximation 1}) for the MB procedure is verified.

Now consider the EB procedure. On the event $A_{n} \cap \{ \Pr(|W^{MB}-\bar{W}^{MB}|>\zeta_{n1}\mid X_1^n)\leq \zeta_{n2} \} \cap \{ \rho_n^{MB}<\nu_n \} \cap \{ \rho_n^{EB}<\nu_n \}$, which holds with probability larger than $1- C n^{-c}$,
\begin{align*}
&\Pr(|W^{EB}-\bar{W}^{EB}|>\zeta_{n1}\mid X_1^n)\\
&\quad \leq \Pr(\max_{1\leq j\leq p}|(\sigma_j/\hat{\sigma}_j)-1|\times|\bar{W}^{EB}|>\zeta_{n1}\mid X_1^n)\\
&\quad \leq \Pr(\max_{1\leq j\leq p}|(\sigma_j/\hat{\sigma}_j)-1|\times|\bar{W}^{MB}|>\zeta_{n1}\mid X_1^n)+\rho_n^{EB}+\rho_n^{MB}\\
&\quad \leq \Pr(\max_{1\leq j\leq p}|(\hat{\sigma}_j/\sigma_j)-1|\times|W^{MB}|>\zeta_{n1}/4\mid X_1^n)+\rho_n^{EB}+\rho_n^{MB}\leq C n^{-c},
\end{align*}
so that (\ref{eq: W approximation 1}) for the EB procedure is verified.
This completes the proof.
\qed

\subsection{Proof of Theorem \ref{thm: simulation MS method}}
Here $c,C$ denote generic positive constants depending only on $c_{1},C_{1}$; their values may change from place to place. Let $\hat{J}_B$ stand either for $\hat{J}_{MB}$ or $\hat{J}_{EB}$ depending on which bootstrap procedure is used.
Let
\[
(Y_{1},\dots,Y_{p})^T\sim N(0,\Ep[Z_{1}Z_{1}^T]).
\]
For $\gamma\in(0,1)$, denote by $c_0(\gamma)$  the $(1-\gamma)$-quantile of the distribution of $\max_{1\leq j\leq p}Y_j$. Recall that in the proof of Theorem \ref{thm: simulation plugin method}, we established that with probability larger than $1 - C n^{-c}$, $c^B(\alpha)\geq c_0(\alpha+\bar\varphi_n)$ and $c^B(\alpha)\leq c_0(\alpha-\bar\varphi_n)$ for some $0<\bar\varphi_n\leq C n^{-c}$; see (\ref{eq: critical value lower bound 1}) and (\ref{eq: critical value upper bound 1}).  Define
\[
J_{2} :=\{j\in\{1,\dots,p\}:\sqrt{n}\mu_j/\sigma_j>-c_0(\beta_n+\bar\varphi_n)\}, \, J_{2}^c=\{1,\dots,p\}\backslash J_{2}.
\]
We divide the proof into several steps.

\medskip

{\bf Step 1}. 
We wish to prove that with probability larger than $1-\beta_{n} - Cn^{-c}$, $\hat{\mu}_j\leq 0$ for all $j\in J_2^c$.

\medskip
Like in the proof of Theorem \ref{thm: analytical rms method}, observe that
\[
\hat{\mu}_{j} > 0\text{ for some }j \in J_{2}^{c} \Rightarrow \max_{1\leq j\leq p}\sqrt{n}(\hat{\mu}_{j} - \mu_{j})/\sigma_j >  c_0(\beta_{n}+\bar\varphi_n),
\]
so that it is enough to prove that
\[
\Pr \left (\max_{1 \leq j \leq p}\frac{ \sqrt{n}(\hat{\mu}_{j} - \mu_{j})}{\sigma_j} > c_0(\beta_{n}+\bar\varphi_n)  \right ) \leq \beta_{n} + Cn^{-c}.
\]
But this follows from Proposition 2.1 in \cite{CCK14} (and the fact that $\bar\varphi_{n}\leq Cn^{-c}$). This concludes Step 1.
\medskip

{\bf Step 2}. We wish to prove that with probability larger than $1-\beta_{n} - Cn^{-c}$, $\hat{J}_{B}\supset J_{2}$.

\medskip
Like in the proof of Theorem \ref{thm: analytical rms method}, observe that
\begin{align*}
&\Pr (\hat{J}_{B} \not \supset J_{2}) \\
&\quad \leq \Pr \left( \max_{1\leq j\leq p}\left[\sqrt{n}(\mu_{j}-\hat{\mu}_{j})-(2\hat{\sigma}_{j}c^{B}(\beta_n)-\sigma_{j}c_0(\beta_n+\bar\varphi_n) ) \right] > 0 \right ).
\end{align*}
Since whenever $c^{B}(\beta_{n}) \geq c_{0}(\beta_{n}+\bar\varphi_{n})$ and $\hat{\sigma}_{j}/\sigma_{j} - 1 \geq -r/2$ for some $r > 0$,
\begin{align*}
&2\hat{\sigma}_{j}c^{B}(\beta_n)-\sigma_{j}c_0(\beta_n+\bar\varphi_n) \geq (2 \hat{\sigma}_{j} - \sigma_{j}) c_{0}(\beta_{n}+\bar\varphi_{n}) \\
&\quad =\sigma_{j}(1+ 2(\hat{\sigma}_{j}/\sigma_{j} - 1)) c_{0}(\beta_{n}+\bar\varphi_{n}) \geq (1-r) \sigma_{j} c_{0}(\beta_{n}+\bar\varphi_{n}),
\end{align*}
we have
\begin{align}
\Pr (\hat{J}_{B} \not \supset J_{2}) &\leq \Pr \left (\max_{1 \leq j \leq p} \frac{\sqrt{n}(\mu_{j}-\hat{\mu}_{j})}{\sigma_{j}} > (1-r) c_{0}(\beta_{n}+\bar\varphi_{n}) \right ) \label{eq: ineq4}\\
&\quad+ \Pr \left ( c^{B}(\beta_{n}) < c_{0}(\beta_{n}+\bar\varphi_{n}) \right ) + \Pr \left  ( \max_{1 \leq j \leq p} | (\hat{\sigma}_{j}/\sigma_{j}) - 1 | > r/2 \right ).\notag
\end{align}
By Proposition 2.1 in \cite{CCK14}, the probability on the right-hand side of (\ref{eq: ineq4}) is bounded by
\[
\Pr \left(\max_{1 \leq j \leq p} Y_{j} > (1-r) c_{0}(\beta_{n}+\bar\varphi_{n})\right) + Cn^{-c}.
\]
Moreover, by Lemma \ref{lem: critical value bound},
\begin{multline*}
\Pr \left(\max_{1 \leq j \leq p} Y_{j} > (1-r) c_{0}(\beta_{n}+\bar\varphi_{n})\right) \\
\leq \beta_{n} + \bar\varphi_{n} + 2r \left ( \sqrt{2 \log p} + 1) (\sqrt{2 \log p} + \sqrt{2\log (1/(\beta_{n}+\bar\varphi_{n}))} \right ),
\end{multline*}
which is bounded by $\beta_{n} + \bar\varphi_{n} + C r\log (p n)$.
Thus,
\[
\Pr (\hat{J}_{B} \not \supset J_{2}) \leq \beta_{n} + \Pr \left  ( \max_{1 \leq j \leq p} | (\hat{\sigma}_{j}/\sigma_{j}) - 1 | > r/2 \right ) +  C (r \log (pn) + n^{-c}).
\]
Choosing $r=r_{n}=n^{-(1-c_{1})/2} B_{n}^{2} \log p$, we see that, by Lemma \ref{lem: variance}, the second term on the right-hand side of the inequality above is bounded by $Cn^{-c}$, and
\[
r \log (pn) \leq n^{-(1-c_{1})/2} B_{n}^{2} \log^{2}(pn) \leq C_{1}n^{-c_{1}/2},
\]
because of the assumption that $B_{n}^{2} \log^{7/2}(pn) \leq C_{1}n^{1/2-c_{1}}$. This leads to the conclusion of Step 2.
\medskip

{\bf Step 3}. We  are now in position to finish the proof of the theorem. Assume first that $J_{2}=\emptyset$. Then by Step 1 we have that $T\leq 0$ with probability larger than $1-\beta_n-Cn^{-c}$. But as $c^{B,2S}(\alpha)\geq 0$ (recall that $\alpha<1/2$), we have $\Pr (T > c^{B,2S}(\alpha) ) \leq \beta_{n} + Cn^{-c} \leq \alpha + Cn^{-c}$. Now consider the case where $J_{2} \neq \emptyset$. Define $c^{B,2S}(\alpha,J_{2})$ by the same bootstrap procedure as $c^{B,2S}(\alpha)$ with $\hat{J}_{B}$ replaced by $J_{2}$. Note that $c^{B,2S}(\alpha) \geq c^{B,2S}(\alpha,J_{2})$ on the event $\hat{J}_{B} \supset J_{2}$. Therefore, arguing as in Step 3 of the proof of Theorem \ref{thm: analytical rms method},
\begin{align*}
&\Pr(T>c^{B,2S}(\alpha)) \leq \Pr\left(\max_{j \in J_{2}}\sqrt{n}\hat{\mu}_j/\hat{\sigma}_j>c^{B,2S}(\alpha)\right)+\beta_n+Cn^{-c} \\
&\qquad \leq \Pr\left(\max_{j \in J_{2}}\sqrt{n}\hat{\mu}_{j}/\hat{\sigma}_{j}>c^{B,2S}(\alpha,J_{2})\right)+2\beta_{n}+Cn^{-c} \\
&\qquad \leq \Pr\left(\max_{j \in J_{2}}\sqrt{n}(\hat{\mu}_{j}-\mu_{j})/\hat{\sigma}_{j}>c^{B,2S}(\alpha,J_{2})\right)+2\beta_{n}+Cn^{-c} \\
&\qquad \leq \alpha-2\beta_n+2\beta_n+Cn^{-c}=\alpha + Cn^{-c}.
\end{align*}
This gives the first assertion of the theorem.

Moreover,  when $\mu_j=0$ for all $1 \leq j \leq p$, we have $J_{2}=\{1,\dots,p\}$. Hence by Step 2, $c^{B,2S}(\alpha)=c^{B,2S}(\alpha,J_{2})$ with probability larger than $1-\beta_n-Cn^{-c}$. Therefore,
\begin{align*}
&\Pr(T>c^{B,2S}(\alpha))=\Pr\left(\max_{1\leq j\leq p}\sqrt{n}(\hat{\mu}_{j}-\mu_{j})/\hat{\sigma}_{j}>c^{B,2S}(\alpha)\right)\\
&\qquad \geq \Pr\left(\max_{1\leq j\leq p}\sqrt{n}(\hat{\mu}_{j}-\mu_{j})/\hat{\sigma}_{j}>c^{B,2S}(\alpha,J_{2})\right)-\beta_{n}-Cn^{-c}\\
&\qquad \geq \alpha-3\beta_n-Cn^{-c}.
\end{align*}
This gives the second assertion of the theorem. Finally, the last assertion follows trivially.
This completes the proof of the theorem.
\qed

\subsection{Proof of Theorem \ref{thm: HB method}}

Recall the set $J_{1} \subset \{ 1,\dots, p \}$ defined in (\ref{eq: true SN set}). By Steps 1 and 2 in the proof of Theorem \ref{thm: analytical rms method}, we see that
\begin{align*}
&\Pr (\hat{\mu}_{j} \leq 0\text{ for all } j \in J_{1}^{c}) > 1-\beta_{n}-C n^{-c},\\
&\Pr (\hat{J}_{SN} \supset J_{1}) > 1-\beta_{n}-Cn^{-c},
\end{align*}
where $c,C$ are some positive constants depending only on $c_{1},C_{1}$. The rest of the proof is completely analogous to Step 3 in the proof of Theorem \ref{thm: simulation MS method} and hence omitted. \qed

\subsection{Proof of Theorem \ref{thm: weak inequalities}}
Here $c,C$ denote generic positive constants depending only on $c_1,C_1,c_2,C_2$; their values may change from place to place.
Define
\begin{align*}
&J_2:=\left\{j\in\{1,\dots,p\}:\sqrt{n}\mu_{j}/\sigma_j>-c_0(\beta_n+\bar\varphi_n)\right\},\, J_2^c:=\{1,\dots,p\}\backslash J_2,\\
&J_3:=\left\{j\in\{1,\dots,p\}:\sqrt{n}|\mu_{j l}^V/\sigma_{j l}^V|>2c_0^V(\beta_n)\text{ for some }l=1,\dots,r\right\}
\end{align*}
where $c_0(\beta_n+\bar\varphi_n)$ is defined as in the proof of Theorem \ref{thm: simulation MS method} and $c_0^V(\beta_n)$ is the $(1-\beta_n)$-quantile of the distribution of $\max_{j,l}Y_{j l}^V$ where $\{Y_{j l}^V,1\leq j\leq p,1\leq l\leq r\}$ is a sequence of Gaussian random variables with mean zero and covariance $\Ep[Y_{j l}^V Y_{j' l'}^V]=\Ep[Z_{1 j l}^V Z_{1 j' l'}^V]$. 

By the same arguments as those used in Steps 1 and 2 of the proof of Theorem \ref{thm: simulation MS method}, we have 
\begin{align*}
&\Pr(J_2\subset \hat{J}_B)\geq 1 - \beta_n - C n^{-c},\\
&\Pr(J_3\subset \hat{J}_B'')\geq 1- \beta_n - C n^{-c},\\
&\Pr(\hat{J}_B'\subset J_3)\geq 1 - \beta_n - C n^{-c},\\
&\Pr(\hat{\mu}_j\leq 0,\text{ for all }j \in J_2^c)\geq 1-\beta_n-C n^{-c}.
\end{align*}
Define $c^{B,3S}(\alpha,J_2\cap J_3)$ by the same bootstrap procedure as $c^{B,3S}(\alpha)$ with $\hat{J}_{B}\cap\hat{J}_{B}''$ replaced by $J_2\cap J_3$. Then inequalities above imply that $c^{B,3S}(\alpha,J_2\cap J_3)\leq c^{B,3S}(\alpha)$ with probability larger than $1 - 2\beta_n - C n^{-c}$. 
Therefore, by an argument similar to that used in Step 3 of the proof of Theorem  \ref{thm: simulation MS method}, with maximum over empty set understood as 0, we have
\begin{align*}
&\Pr(T>c^{B,3S}(\alpha))\leq \Pr\left(\max_{j\in J_2\cap \hat{J}_{B}'}\sqrt{n}\hat{\mu}_j/\hat{\sigma}_j>c^{B,3S}(\alpha)\right)+\beta_n+C n^{-c}\\
&\quad \leq \Pr\left(\max_{j\in J_2\cap \hat{J}_{B}'}\sqrt{n}\hat{\mu}_j/\hat{\sigma}_j>c^{B,3S}(\alpha, J_2\cap J_3)\right)+3\beta_n+C n^{-c}\\
&\quad \leq \Pr\left(\max_{j\in J_2\cap J_3}\sqrt{n}\hat{\mu}_j/\hat{\sigma}_j>c^{B,3S}(\alpha, J_2\cap J_3)\right)+4\beta_n+C n^{-c}\\
&\quad \leq \alpha-4\beta_n+4\beta_n+C n^{-c}=\alpha + C n^{-c}.
\end{align*}
This completes the proof of the theorem.
\qed

\subsection{Proof of Theorem \ref{thm: rate of uniform consistency}}
To prove this theorem, we will apply the following lemma:
\begin{lemma}
\label{lem: general power}
In the setting of Theorem \ref{thm: rate of uniform consistency}, for every $\underline{\epsilon}\geq 0$, there exist $\epsilon >0$ and $\delta \in (0,1)$ such that whenever
 \[
\max_{1 \leq j \leq p} (\mu_{j}/\sigma_{j}) \geq (1+\delta)(1+\epsilon + \underline{\epsilon}) \sqrt{\frac{2 \log (p/\alpha)}{n}},
\]
we have
\begin{align*}
 \Pr (T > \hat{c}(\alpha)) \geq &1-\frac{1}{2(1-\delta)^{2}\epsilon^{2} \log (p/\alpha)} \\
&- \max_{1 \leq j \leq p}\Pr( | \hat{\sigma}_{j}/\sigma_{j} - 1| > \delta)-\Pr\left(\hat{c}(\alpha)>(1+\underline{\epsilon}) \sqrt{2\log (p/\alpha)}\right).
\end{align*}
\end{lemma}

\noindent
{\em Proof.}
Let $j^{*} \in \{ 1,\dots, p \}$ be any index such that $\mu_{j^{*}}/\sigma_{j^{*}} = \max_{1 \leq j \leq p} (\mu_{j}/\sigma_{j})$. Let $A_{n,1}$ and $A_{n,2}$ be the events that $| \hat{\sigma}_{j^{*}}/\sigma_{j^{*}} - 1 | \leq \delta$ and $\hat{c}(\alpha)\leq (1+\underline{\epsilon})\sqrt{2\log(p/\alpha)}$, respectively. Then on the event $A_{n,1}\cap A_{n,2}$,
\begin{align*}
T &\geq \sqrt{n}\hat{\mu}_{j^{*}}/\hat{\sigma}_{j^{*}} = \sqrt{n}\mu_{j^{*}}/\hat{\sigma}_{j^{*}} + \sqrt{n}(\hat{\mu}_{j^{*}} - \mu_{j^{*}})/\hat{\sigma}_{j^{*}} \\
&\geq (1/(1+\delta)) \cdot \sqrt{n}\mu_{j^{*}}/\sigma_{j^{*}} + \sqrt{n}(\hat{\mu}_{j^{*}} - \mu_{j^{*}})/\hat{\sigma}_{j^{*}} \\
&\geq (1+\epsilon + \underline{\epsilon}) \sqrt{2 \log (p/\alpha)} +  \sqrt{n}(\hat{\mu}_{j^{*}} - \mu_{j^{*}})/\hat{\sigma}_{j^{*}},
\end{align*}
so that
\[
\sqrt{n}(\hat{\mu}_{j^{*}} - \mu_{j^{*}})/\hat{\sigma}_{j^{*}} > -\epsilon \sqrt{2 \log (p/\alpha)}\quad \Rightarrow\quad T > \hat{c}(\alpha).
\]
Hence we have
\begin{align*}
&\Pr (T > \hat{c}(\alpha) ) \geq \Pr \left ( \left \{ T > \hat{c}(\alpha) \right \} \cap A_{n,1}\cap A_{n,2} \right) \\
&\quad \geq \Pr \left ( \left \{ \sqrt{n}(\hat{\mu}_{j^{*}} - \mu_{j^{*}})/\hat{\sigma}_{j^{*}} > -\epsilon \sqrt{2 \log (p/\alpha)} \right \} \cap A_{n,1}\cap A_{n,2} \right) \\
&\quad \geq  \Pr \left ( \left \{\sqrt{n}(\hat{\mu}_{j^{*}}-\mu_{j^{*}})/\sigma_{j^{*}} > - (1-\delta)\epsilon \sqrt{2 \log (p/\alpha)} \right \} \cap A_{n,1}\cap A_{n,2} \right) \\
&\quad \geq \Pr \left ( \sqrt{n}(\hat{\mu}_{j^{*}}-\mu_{j^{*}})/\sigma_{j^{*}} > - (1-\delta)\epsilon \sqrt{2 \log (p/\alpha)}  \right )  - \Pr (A_{n,1}) - \Pr(A_{n,2}).
\end{align*}
By Markov's inequality, we have
\begin{align*}
&\Pr \left ( \sqrt{n}(\hat{\mu}_{j^{*}}-\mu_{j^{*}})/\sigma_{j^{*}} > - (1-\delta)\epsilon \sqrt{2 \log (p/\alpha)}  \right ) \\
&\quad =1-\Pr \left ( \sqrt{n}(\mu_{j^{*}}-\hat{\mu}_{j^{*}})/\sigma_{j^{*}} \geq  (1-\delta)\epsilon \sqrt{2 \log (p/\alpha)}  \right )  \\
&\quad \geq 1-  \frac{1}{2(1-\delta)^{2}\epsilon^{2}\log (p/\alpha)}.
\end{align*}
This completes the proof. \qed

\medskip

\noindent
Getting back to the proof of Theorem \ref{thm: rate of uniform consistency}, let $c,C$ denote generic positive constants depending only on $\alpha,c_{1},C_{1}$ but such that their values may change from place to place. Note that since $M_{n,4}^2 \log^{1/2} p\leq C_{1}n^{1/2-c_{1}}$, by Markov's inequality, there exists  $\delta_n \leq \min \{ C \log^{-1/2} p, 1/2 \}$ such that
\[
\max_{1\leq j\leq p} \Pr\left(|\hat{\sigma}_j/\sigma_j-1|>\delta_n \right) \leq C n^{-c}.
\]
Hence, by Lemma \ref{lem: general power}, we only have to verify that
\begin{equation}\label{eq: bound on critical value wp1}
\Pr(\hat{c}(\alpha) > (1+C\log^{-1/2}p) \sqrt{2\log (p/\alpha)})\leq C n^{-c}.
\end{equation}
To this end, since $\alpha - 2\beta_{n} \geq c_1\alpha$, we note that
\[
c^{SN,2S}(\alpha) \leq c^{SN}(c_1\alpha), \ c^{B,2S}(\alpha) \vee c^{B,H}(\alpha) \leq c^{B}(c_1\alpha)
\]
where $B=MB$ or $EB$,
so that it suffices to verify (\ref{eq: bound on critical value wp1}) with $\hat{c}(\alpha)=c^{SN}(\alpha)$, $c^{MB}(\alpha)$, and $c^{EB}(\alpha)$.

For $\hat{c}(\alpha)=c^{SN}(\alpha)$, since $\Phi^{-1}(1-p/\alpha) \leq \sqrt{2 \log(p/\alpha)}$ and $\log^{3/2} p \leq C_{1}n$, it is straightforward to see that (\ref{eq: bound on critical value wp1}) is verified.
For $\hat{c}(\alpha)=c^{MB}(\alpha)$, it follows from Lemma \ref{lem: critical value bound} that $c^{MB}(\alpha)\leq \sqrt{2\log p}+\sqrt{2\log(1/\alpha)}$, so that (\ref{eq: bound on critical value wp1}) can be verified by simple algebra.

Now consider $\hat{c}(\alpha)=c^{EB}(\alpha)$. It is established in Step 1 of the proof of Theorem \ref{thm: simulation plugin method} that there exists a sequence $\bar\varphi_n\geq 0$ such that $\bar\varphi_n\leq C n^{-c}$ and $\Pr(c^{EB}(\alpha)>c_0(\alpha-\bar\varphi_n))\leq C n^{-c}$ where $c_0(\alpha-\bar\varphi_n)$ is the $(1-\alpha+\bar\varphi_n)$th quantile of the distribution of $\max_{1\leq j\leq p}Y_j$ and $(Y_1,\dots,Y_p)^T$ is a normal vector with mean zero and all diagonal elements of the covariance matrix equal to one. By Lemma \ref{lem: critical value bound},
$$
c_0(\alpha-\bar\varphi_n)\leq \sqrt{2\log p}+\sqrt{2\log(1/(\alpha-\bar\varphi_n))}.
$$
In addition, simple algebra shows that
$$
(1+C\log^{-1/2}p)\sqrt{2\log(p/\alpha)}>\sqrt{2\log p}+\sqrt{2\log(1/(\alpha-\bar\varphi_n))}
$$
if $C$ is chosen sufficiently large (and depending on $\alpha$). Combining these inequalities gives (\ref{eq: bound on critical value wp1}). This completes the proof.
\qed

\subsection{Proof of Theorem \ref{thm: confidence region}}

The theorem readily follows from Theorems \ref{thm: analytical plugin method}-\ref{thm: HB method}. \qed

\subsection{Proof of Theorem \ref{thm: validity of BMB}}
Here $c,c',C,C'$ denote generic positive constants depending only on $c_1,c_2,C_1$; their values may change from place to place.
It suffices to show that $| \Pr (\check{T} \leq \hat{c}^{BMB}(\alpha)) -  \alpha | \leq Cn^{-c}$ when $\mu_{j} = 0, 1 \leq \forall j \leq p$.
Suppose that $\mu_{j} = 0, 1 \leq \forall j \leq p$.
We use the extensions of the results in \cite{CCK12} to dependent data proved in Appendix \ref{appendix: dependence} ahead.
Note that since $\log (pn) \leq C \sqrt{q}$ (which follows from $(r/q) \log^2 p \leq C_1 n^{-c_2}$), $\sqrt{q} D_n \log^{7/2} (pn) \leq C q D_{n}\log^{5/2} (pn) \leq C' n^{1/2-c_2}$, so that by Theorem \ref{thm: high dim CLT under dependence} in Appendix \ref{appendix: dependence},
\begin{equation}
\sup_{t \in \R} | \Pr (\check{T} \leq t) - \Pr ( \max_{1 \leq j \leq p} \check{Y}_{j} \leq t) | \leq C n^{-c}, \label{eq: step1-dep}
\end{equation}
where $\check{Y} = (\check{Y}_1,\dots,\check{Y}_{p})^{T}$ is a centered normal random vector with covariance matrix $\Ep [ \check{Y} \check{Y}^{T} ] = (1/(mq)) \sum_{l=1}^{m} \Ep [(\sum_{i \in I_{l}} X_{i})(\sum_{i \in I_{l}} X_{i})^{T}]$.
Note that $c_1 \leq \underline{\sigma}^{2}(q) \leq \Ep [\check{Y}_{j}^{2}] \leq \overline{\sigma}^{2}(q)\leq C_1, 1 \leq \forall j \leq p$.

Let $\check{W}_{0} = \max_{1 \leq j \leq p} (1/\sqrt{mq}) \sum_{l=1}^{m} \epsilon_{l} \sum_{i \in I_{l}} X_{ij}$. Then by Theorem \ref{thm: validity of BMB prelim}, with probability larger than $1-C n^{-c}$,
\[
\sup_{t \in \R} | \Pr ( \check{W}_{0} \leq t \mid X_{1}^{n} ) - \Pr ( \max_{1 \leq j \leq p} \check{Y}_{j} \leq t) | \leq C' n^{-c'}.
\]
Observe that $|\check{W} - \check{W}_{0}| \leq \max_{1 \leq j \leq p} | \sqrt{n} \hat{\mu}_{j} | \cdot | m^{-1} \sum_{l=1}^{m} \epsilon_l |$.
Here since $q \leq C n^{1/2-c_2}$, we have $m \geq n/(4q) \geq C^{-1} n^{1/2-c_2}$, so that by Markov's inequality, $\Pr ( | m^{-1} \sum_{l=1}^{m} \epsilon_l | > C n^{-1/4+5c_2/8} ) \leq  n^{-c_2/8}$.
On the other hand, by applying Theorem \ref{thm: high dim CLT under dependence} to $(X_{i1},\dots,X_{ip},-X_{i1},\dots,-X_{ip})^{T}$,
we have
\[
\sup_{t \in \R} | \Pr ( \max_{1 \leq j \leq p} | \sqrt{n} \hat{\mu}_{j} | \leq t) - \Pr ( \max_{1 \leq j \leq p} |\check{Y}_{j}| \leq t) | \leq C n^{-c}.
\]
Since $\Ep [ \max_{1 \leq j \leq p} | \check{Y}_{j} | ] \leq C\sqrt{\log p}$, we conclude that
\[
\Pr ( \max_{1 \leq j \leq p} | \sqrt{n} \hat{\mu}_{j} | > C n^{c_2/8} \sqrt{\log p}) \leq C'n^{-c}.
\]
Hence with probability larger than $1-Cn^{-c}$,
\[
\Pr ( | \check{W} - \check{W}_{0} | > \zeta_n \mid X_{1}^{n} ) \leq n^{-c'},
\]
where $\zeta_n = C' n^{-1/4 + 3c_2/4} \sqrt{\log p}$. Note that since $q D_n \log^{5/2} (pn) \leq C_1 n^{1/2-c_2}$,
$n^{-1/4+c_2/2} \log p \leq C q^{-1/2} \leq C' n^{-c_2/2}$ (the second inequality follows from $(r/q) \log^2 p \leq C_1 n^{-c_2}$ so that $q^{-1} \leq C n^{-c_2}$),
and hence $\zeta_n \sqrt{\log p} \leq C n^{-c_2/4}$. Using the anti-concentration property of $\max_{1 \leq j \leq p} \check{Y}_{j}$ (see Step 3 in the proof of Theorem \ref{thm: high dim CLT under dependence}), we conclude that with probability larger than $1-Cn^{-c}$,
\[
\sup_{t \in \R} | \Pr (\check{W} \leq t \mid X_{1}^{n}) - \Pr (\max_{1 \leq j \leq p} \check{Y}_{j} \leq t) | \leq C' n^{-c'}.
\]
The desired assertion follows from combining this inequality with (\ref{eq: step1-dep}). \qed

\subsection{Proof of Theorem \ref{thm: approximate moment inequalities}}
Here $c,C$ denote generic positive constants depending only on $c_1,C_1$; their values may change from place to place.
Define
\begin{align*}
&\bar{T}:=\max_{1\leq j\leq p}\frac{\sqrt{n}(\hat{\mu}_{j}-\mu_j)}{\hat{\sigma}_j}, \ T_0:=\max_{1\leq j\leq p}\frac{\sqrt{n}(\hat{\mu}_{j,0}-\mu_j)}{\sigma_j},\\
&W^{MB}:=\max_{1\leq j\leq p}\frac{\sqrt{n}\En[\epsilon_i(\hat{X}_{i j}-\hat{\mu}_j)]}{\hat{\sigma}_j}, \ \bar{W}^{MB}:=\max_{1\leq j\leq p}\frac{\sqrt{n}\En[\epsilon_i(X_{i j}-\hat{\mu}_{j,0})]}{\sigma_j},\\
&W^{EB}:=\max_{1\leq j\leq p}\frac{\sqrt{n}\En[\hat{X}_{i j}^*-\hat{\mu}_j]}{\hat{\sigma}_j}, \ \bar{W}^{EB}:=\max_{1\leq j\leq p}\frac{\sqrt{n}\En[X_{i j}^*-\hat{\mu}_{j,0}]}{\sigma_j},
\end{align*}
where $\hat{X}_1^*,\dots,\hat{X}_n^*$ is an empirical bootstrap sample from $\hat{X}_1,\dots,\hat{X}_n$, and $X_1^*,\dots,X_n^*$ is an empirical bootstrap sample from $X_1,\dots,X_n$. Observe that the critical values $c^{MB,2S}(\alpha)$ and $c^{EB,2S}(\alpha)$ are based on the bootstrap statistics $W^{MB}$ and $W^{EB}$.  

We divide the proof into several steps. 
In Steps 1, 2, and 3, we prove that 
\begin{align}
&\Pr\left(|\bar{T}-T_0|>\zeta_{n1}'\right)\leq C n^{-c}, \label{eq: T approximation res}\\
&\Pr(\Pr(|W^{MB}-\bar{W}^{MB}|>\zeta_{n1}'\mid X_{1}^n)> C n^{-c})\leq C n^{-c},\label{eq: WMB approximation res}\\
&\Pr(\Pr(|W^{EB}-\bar{W}^{EB}|>\zeta_{n1}'\mid X_{1}^n)> C n^{-c})\leq C n^{-c},\label{eq: WEB approximation res}
\end{align}
respectively, for some $\zeta_{n1}'$ satisfying $\zeta_{n1}'\sqrt{\log p}\leq C n^{-c}$.
In Step 4, we prove an auxiliary result that
\begin{equation}\label{eq: sigma approximation res}
\Pr\left(\max_{1\leq j\leq p}|1-\hat{\sigma}_j/\hat{\sigma}_{j,0}|>C\zeta_{n1}\right)\leq C n^{-c}.
\end{equation}
Given results (\ref{eq: T approximation res})-(\ref{eq: WEB approximation res}), the conclusions of the theorem follow by repeating the arguments used in the proofs of Theorems \ref{thm: simulation plugin method} and \ref{thm: simulation MS method}.

In the proof, we will frequently use the following implications of Lemma \ref{lem: variance} (recall that $\hat{\sigma}_j$ in Lemma \ref{lem: variance} is denoted as $\hat{\sigma}_{j,0}$ in this proof):
\begin{align}
&\Pr\left(\max_{1\leq j\leq p}(\sigma_j/\hat{\sigma}_{j,0})^2>2\right)\leq C n^{-c},\label{eq: sigma upper bound res}\\
&\Pr\left(\max_{1\leq j\leq p}(\hat{\sigma}_{j,0}/\sigma_j)^2>2\right)\leq C n^{-c}.\label{eq: sigma lower bound res}
\end{align} 

\medskip

{\bf Step 1}. Here we wish to prove (\ref{eq: T approximation res}).
Define $T_0':=\max_{1\leq j\leq p}\sqrt{n}(\hat{\mu}_{j,0}-\mu_j)/\hat{\sigma}_j$. Observe that
\begin{align*}
|\bar{T}-T_0'|
&\leq \max_{1\leq j\leq p}\left|\frac{\sqrt{n}(\hat{\mu}_j-\hat{\mu}_{j,0})}{\hat{\sigma}_j}\right|\leq C\max_{1\leq j\leq p}\left|\frac{\sqrt{n}(\hat{\mu}_j-\hat{\mu}_{j,0})}{\sigma_j}\right|\\
&\leq C\max_{1\leq j\leq p}|\sqrt{n}(\hat{\mu}_j-\hat{\mu}_{j,0})|\leq C\zeta_{n1}
\end{align*}
with probability larger than $1-C n^{-c}$ where the second inequality in the first line follows from (\ref{eq: sigma approximation res}) and (\ref{eq: sigma upper bound res}) and the second line follows from assumptions. Also, 
\[
|T_0'-T_0|\leq \max_{1\leq j\leq p}|\sigma_j/\hat{\sigma}_j-1|\times \max_{1\leq j\leq p}|\sqrt{n}\En[Z_{i j}]|,
\]
where $Z_{i j}=(X_{i j}-\mu_j)/\sigma_j$. As shown in Step 3 of the proof of Theorem \ref{thm: simulation plugin method},
\[
\Pr\left(\max_{1\leq j\leq p}|\sqrt{n}\En[Z_{i j}]|>n^{c_1/4}\sqrt{\log p}\right)\leq C n^{-c}.
\]
In addition, using an elementary inequality $|ab-1|\leq |a||b-1|+|a-1|$ with $a=\sigma_j/\hat{\sigma}_{j,0}$ and $b=\hat{\sigma}_{j,0}/\hat{\sigma}_j$, we obtain from (\ref{eq: sigma estimation bound}) in the proof of Theorem \ref{thm: simulation plugin method}, (\ref{eq: sigma approximation res}), and (\ref{eq: sigma upper bound res}) that
\[
\Pr\left(\max_{1\leq j\leq p}|\sigma_j/\hat{\sigma}_j-1|>C(n^{-1/2+c_1/4}B_n^2\log p+\zeta_{n 1})\right)\leq C n^{-c}
\]
(remember that $\hat{\sigma}_j$ in the proof of Theorem \ref{thm: simulation plugin method} corresponds to $\hat{\sigma}_{j,0}$ here). Therefore, the claim of this step holds with $\zeta_{n1}':=C(n^{-1/2+c_1/2}B_n^2(\log p)^{3/2}+\zeta_{n1}n^{c_1/4}\sqrt{\log p})$ for sufficiently large $C$.

\medskip

{\bf Step 2}. Here we wish to prove (\ref{eq: WMB approximation res}).
Let $\hat{W}^{MB}:=\max_{1\leq j\leq p}\sqrt{n}\En[\epsilon_i(X_{i j}-\hat{\mu}_{j,0})]/\hat{\sigma}_j$. By (\ref{eq: sigma approximation res}) and (\ref{eq: sigma upper bound res}), with probability larger than $1-C n^{-c}$,
\begin{align*}
|W^{MB}-\hat{W}^{MB}|
&\leq \max_{1\leq j\leq p}\frac{|\sqrt{n}\En[\epsilon_i(\hat{X}_{i j}-X_{i j}-\hat{\mu}_j+\hat{\mu}_{j,0})]|}{\hat{\sigma}_j}\\
&\leq C\max_{1\leq j\leq p}\frac{|\sqrt{n}\En[\epsilon_i(\hat{X}_{i j}-X_{i j}-\hat{\mu}_j+\hat{\mu}_{j,0})]|}{\sigma_j}\\
&\leq C\max_{1\leq j\leq p}|\sqrt{n}\En[\epsilon_i(\hat{X}_{i j}-X_{i j}-\hat{\mu}_j+\hat{\mu}_{j,0})]|, 
\end{align*}
where the third inequality follows from the assumption that $\sigma_j\geq c_1$ for all $j=1,\dots,p$. 
Conditional on $X_1^n$, the vector $(\sqrt{n}\En[\epsilon_i(\hat{X}_{i j}-X_{i j}-\hat{\mu}_j+\hat{\mu}_{j,0})])_{1\leq j\leq p}$ is normal with mean zero and all diagonal elements of the covariance matrix bounded by $\max_{1\leq j\leq p}\En[(\hat{X}_{i j}-X_{i j}-\hat{\mu}_j+\hat{\mu}_{j,0})^2]$. As established in  in Step 4 below, the last quantity is bounded by $C\zeta_{n1}^2$ with probability larger than $1-C n^{-c}$. 
Therefore,
\begin{equation}\label{eq: establishing WMB 1}
\Pr(\Pr(|W^{MB}-\hat{W}^{MB}|>C\zeta_{n1}\sqrt{\log p}\mid X_1^n)>C n^{-c})\leq C n^{-c}.
\end{equation}
Moreover 
\[
|\hat{W}^{MB}-\bar{W}^{MB}|\leq \max_{1\leq j\leq p}|\sigma_j/\hat{\sigma}_j-1|\times \bar{W}^{MB}.
\]
Now observe that $\bar{W}^{MB}=\max_{1\leq j\leq p}\sqrt{n}\En[\epsilon_i(X_{i j}-\hat{\mu}_{j,0})/\sigma_j]$ and conditional on the data $X_1^n$, the vector $(\sqrt{n}\En[\epsilon_i(X_{i j}-\hat{\mu}_{j,0})/\sigma_j])_{1\leq j\leq p}$ is normal with mean zero and all diagonal elements of the covariance matrix bounded by $\max_{1\leq j\leq p}(\hat{\sigma}_{j,0}^2/\sigma_j^2)$. By (\ref{eq: sigma lower bound res}), the last quantity is bounded by 2 with probability larger than $1-C n^{-c}$. Therefore,
\begin{equation}\label{eq: establishing WMB 2}
\Pr(\Pr(|\hat{W}^{MB}-\bar{W}^{MB}|>\zeta'_{n1}\mid X_1^n)>C n^{-c})\leq C n^{-c}
\end{equation}
where $\zeta'_{n1}$ is defined in Step 1.
Combining (\ref{eq: establishing WMB 1}) and (\ref{eq: establishing WMB 2}) leads to the assertion of this step.

\medskip

{\bf Step 3}. Here we wish to prove (\ref{eq: WEB approximation res}). Let $\hat{W}^{EB}:=\max_{1\leq j\leq p}\sqrt{n}\En[X_{i j}^*-\hat{\mu}_{j,0}]/\hat{\sigma}_j$. By (\ref{eq: sigma approximation res}) and (\ref{eq: sigma upper bound res}), with probability larger than $1-C n^{-c}$,
\begin{align*}
|W^{EB}-\hat{W}^{EB}|
&\leq \max_{1\leq j\leq p}\frac{|\sqrt{n}\En[\hat{X}_{i j}^*-X_{i j}^*-\hat{\mu}_j+\hat{\mu}_{j,0}]|}{\hat{\sigma}_j}\\
&\leq C\max_{1\leq j\leq p}\frac{|\sqrt{n}\En[\hat{X}_{i j}^*-X_{i j}^*-\hat{\mu}_j+\hat{\mu}_{j,0}]|}{\sigma_j}\\
&\leq C\max_{1\leq j\leq p}|\sqrt{n}\En[\hat{X}_{i j}^*-X_{i j}^*-\hat{\mu}_j+\hat{\mu}_{j,0}]|,
\end{align*}
where the third inequality follows from the assumption that $\sigma_j\geq c_1$ for all $1 \leq j \leq p$. 
Applying Lemma \ref{lem: maximal ineq} conditional on the data $X_1^n$, we have
\begin{align*}
&\Ep\left[\max_{1\leq j\leq p}|\sqrt{n}\En[\hat{X}_{i j}^*-X_{i j}^*-\hat{\mu}_j+\hat{\mu}_{j,0}]|\mid X_1^n\right]\\
&\qquad \leq C\left(\max_{1\leq j\leq p}(\En[(\hat{X}_{i j}-X_{i j})^2]\log p)^{1/2}+\max_{i,j}|\hat{X}_{i j}-X_{i j}|(\log p)/\sqrt{n}\right).
\end{align*}
Therefore, by Markov's inequality, we have
\begin{equation}\label{eq: establishing WEB 1}
\Pr(\Pr(|W^{EB}-\hat{W}^{EB}|>C\zeta_{n1}n^{c_1/4}\sqrt{\log p}\mid X_1^n)>C n^{-c})\leq C n^{-c}.
\end{equation}
Moreover
\[
|\hat{W}^{EB}-\bar{W}^{EB}|\leq \max_{1\leq j\leq p}|\sigma_j/\hat{\sigma}_j-1|\times \bar{W}^{EB}.
\]
Applying Lemma \ref{lem: maximal ineq} conditional on the data $X_1^n$ once again, we have
\[
\Ep[\bar{W}^{EB}\mid X_1^n]\leq C\left(\max_{1\leq j\leq p}(\hat{\sigma}_{j,0}/\sigma_j)+\max_{i,j}\frac{|X_{i j}-\mu_j|}{\sigma_j}(\log p)/\sqrt{n}\right).
\]
By (\ref{eq: sigma lower bound res}), $\max_{1\leq j\leq p}(\hat{\sigma}_{j,0}/\sigma_j)\leq \sqrt{2}$ with probability larger than $1-C n^{-c}$.
Here for $Z_{i j}=(X_{i j}-\mu_j)/\sigma_j$,
\begin{align*}
\Ep\left[\max_{1\leq j\leq p}|Z_{i j}| \right]
&\leq \left(\Ep\left[\max_{i,j}|Z_{i j}|^4\right]\right)^{1/4}\leq \left(\Ep\left[n\max_{1\leq j\leq p}|Z_{i j}|^4\right]\right)^{1/4}=n^{1/4}B_n.
\end{align*}
Hence, by Markov's inequality and the assumption that $B_n^2\log^{7/2}(p n)\leq C_1 n^{1/2-c_1}$, we have $\max_{i,j}(|X_{i j}-\mu_j|/\sigma_j)(\log p)/\sqrt{n}\leq C\sqrt{\log p}$ with probability larger than $1- C n^{-c}$ for sufficiently large $C$. Therefore,
\begin{equation}\label{eq: establishing WEB 2}
\Pr(\Pr(|\hat{W}^{EB}-\bar{W}^{EB}|>C\zeta_{n1}\sqrt{\log p}\mid X_1^n)>C n^{-c})\leq C n^{-c}.
\end{equation}
Combining (\ref{eq: establishing WEB 1}) and (\ref{eq: establishing WEB 2}) leads to the assertion of this step.

\medskip

{\bf Step 4}. Here we wish to prove (\ref{eq: sigma approximation res}). 
Using (\ref{eq: sigma upper bound res}), we obtain that
with probability larger than $1-C n^{-c}$, for all $j=1,\dots,p$,
\begin{align*}
\left|1-\frac{\hat{\sigma}_j}{\hat{\sigma}_{j,0}}\right|
&\leq \left|1-\Big(\frac{\hat{\sigma}_j}{\hat{\sigma}_{j,0}}\Big)^2\right|= \frac{1}{\hat{\sigma}_{j,0}^2}\left|\hat{\sigma}_j^2-\hat{\sigma}_{j,0}^2\right|\leq \frac{2}{\sigma_j^2}\left|\hat{\sigma}_j^2-\hat{\sigma}_{j,0}^2\right|\\
&= \frac{2}{\sigma_j^2}\left|\En[(\hat{X}_{i j}-\hat{\mu}_j)^2-(X_{i j}-\hat{\mu}_{j,0})^2]\right|.
\end{align*}
Since $a^2-b^2=(a-b)^2+2b(a-b)$ for any $a,b \in \R$, we have, by the Cauchy-Schwarz inequality, 
\begin{align*}
&| \En[(\hat{X}_{i j}-\hat{\mu}_j)^2-(X_{i j}-\hat{\mu}_{j,0})^2]| \leq\En[(\hat{X}_{i j}-X_{i j}-\hat{\mu}_j+\hat{\mu}_{j,0})^2]\\
&\qquad +2\hat{\sigma}_{j,0}\left( \En[(\hat{X}_{i j}-X_{i j}-\hat{\mu}_j+\hat{\mu}_{j,0})^2]\right)^{1/2}.
\end{align*}
Also,
\[
\left( \En[(\hat{X}_{i j}-X_{i j}-\hat{\mu}_j+\hat{\mu}_{j,0})^2]\right)^{1/2}\leq (\En[(\hat{X}_{i j}-X_{i j})^2])^{1/2}+|\hat{\mu}_j-\hat{\mu}_{j,0}|,
\]
which is further bounded by $C \zeta_{n1}$ with probability larger than $1-C n^{-c}$. Taking these inequalities together, we conclude that with probability larger than $1-C n^{-c}$, for all $j=1,\dots,p$,
\[
\left|1-\frac{\hat{\sigma}_j}{\hat{\sigma}_{j,0}}\right|\leq \frac{2(C\zeta_{n 1})^2}{\sigma_j^2}+\frac{4\hat{\sigma}_{j,0}C\zeta_{n 1}}{\sigma_{j}^2}\leq C\zeta_{n1},
\]
where the last inequality follows from the assumption that $\sigma_j\geq c_1$ for all $j=1,\dots,p$ and inequality (\ref{eq: sigma lower bound res}).
This leads to the assertion of Step 4 and completes the proof of the theorem.
\qed

\section{High dimensional CLT under dependence}
\label{appendix: dependence}

In this section, we extend the results of \cite{CCK12} to dependent data.
Let $X_{1},\dots,X_{n}$ be possibly dependent random vectors in $\R^{p}$ with mean zero, defined on the probability space $(\Omega,\mathcal{A},\Pr)$, and let $\check{T} = \max_{1 \leq j \leq p} \sqrt{n} \En [ X_{ij} ]$.
For the sake of simplicity, we assume that there is some constant $D_n \geq 1$ such that
\[
| X_{ij} | \leq D_n, \ a.s., \ 1 \leq i \leq n; 1 \leq j \leq p.
\]
We follow the other notation used in Appendix \ref{sec: dependent data}.
%
In addition, define
\[
S_{l} = \sum_{i \in I_{l}} X_{i}, \ S_{l}' = \sum_{i \in J_{l}} X_{i},
\]
and let $\{ \tilde{S}_{l} \}_{l=1}^{m}$ and $\{ \tilde{S}_{l}' \}_{l=1}^{m}$ be two independent sequences of random vectors in $\R^{p}$ such that
\[
\tilde{S}_{l} \stackrel{d}{=} S_{l}, \ \tilde{S}_{l}' \stackrel{d}{=} S_{l}', 1 \leq l \leq m.
\]
Moreover, let $\check{Y} = (\check{Y}_1,\dots,\check{Y}_{p})^{T}$ be a centered normal random vector with covariance matrix $\Ep [ \check{Y} \check{Y}^{T} ] = (1/(mq)) \sum_{l=1}^{m} \Ep [S_{l}S_{l}^{T}]$.

\begin{theorem}[High dimensional CLT under dependence]
\label{thm: high dim CLT under dependence}
Suppose that there exist constants $0 < c_1 \leq C_1$ and $0 < c_2 < 1/4$ such that $c_{1} \leq \underline{\sigma}^{2}(q) \leq \overline{\sigma}^{2}( r ) \vee \overline{\sigma}^{2}(q)  \leq C_{1}, (r/q) \log^{2} p \leq C_{1} n^{-c_2}$, and
\[
\max \{ qD_n \log^{1/2} p, rD_n \log^{3/2} p, \sqrt{q} D_n \log^{7/2} (pn) \} \leq C_{1}n^{1/2-c_{2}}.
\]
Then there exist constants $c,C > 0$ depending only on $c_1,c_2,C_1$ such that
\[
\sup_{t \in \R} | \Pr(  \check{T} \leq t ) - \Pr (  \max_{1 \leq j \leq p}  \check{Y}_{j} \leq t  )  | \leq Cn^{-c} + 2(m-1)b_r.
\]
\end{theorem}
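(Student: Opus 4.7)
The plan is to combine Bernstein's large-block/small-block decomposition with a Berbee-type coupling to reduce to the independent case, and then invoke the high-dimensional CLT of \cite{CCK12} on the independentized block sums. First, I would decompose $\sum_{i=1}^n X_i = \sum_{l=1}^m S_l + \sum_{l=1}^m S_l' + R$, where $R$ collects the tail block $J_{m+1}$, and show that only the large-block contribution $(1/\sqrt{n})\sum_l S_l$ affects $\check{T}$ up to an error $\zeta_n$ controlled in max-norm by a maximal inequality (Lemma \ref{lem: maximal ineq}) combined with a Fuk--Nagaev bound (Lemma \ref{lem: fuk-nagaev}): since each coordinate of $S_l'$ has variance at most $r\overline{\sigma}^2(r)\le C_1 r$, the contribution is of order $\sqrt{(r/q)\log p}+rD_n(\log p)/\sqrt n$, which is $o(1/\sqrt{\log p})$ by the assumptions $(r/q)\log^2 p\le C_1 n^{-c_2}$ and $rD_n\log^{3/2}p\le C_1 n^{1/2-c_2}$. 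The remainder $R$ is handled analogously using $qD_n\log^{1/2}p\le C_1 n^{1/2-c_2}$.

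Second, I would couple the dependent block sequence $\{S_l\}_{l=1}^m$ to an independent sequence $\{\tilde S_l\}_{l=1}^m$ with $\tilde S_l\stackrel{d}{=} S_l$. Because the large blocks $I_l$ are separated by gaps of length $r$, iterating Berbee's coupling lemma on an enlarged probability space produces a joint construction with
$$\Pr\bigl(\{S_l\}_{l=1}^m\neq \{\tilde S_l\}_{l=1}^m\bigr)\le (m-1)b_r.$$
This is the source of the $(m-1)b_r$ term; the factor $2$ in the statement enters because this error must be paid in both directions when comparing the left and right tails of $\Pr(\check T\le t)$ with those of $\Pr(\max_j\check Y_j\le t)$.

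Third, I would apply the high-dimensional CLT of \cite{CCK12} to the normalized independent sum $(1/\sqrt{mq})\sum_l \tilde S_l$. Each summand $\tilde S_l/\sqrt q$ is mean zero, a.s.\ bounded by $\sqrt q\, D_n$ in sup-norm, and has coordinate variances in $[c_1,C_1]$ by the hypotheses on $\underline{\sigma}^2(q)$ and $\overline{\sigma}^2(q)$; the condition $\sqrt q\, D_n\log^{7/2}(pn)\le C_1 n^{1/2-c_2}$ is tailored so that the CCK12 error of order $(qD_n^2\log^7(pm)/m)^{1/6}$ is $O(n^{-c})$, yielding
$$\sup_{t\in\R}\Bigl|\Pr\Bigl(\max_{1\le j\le p}\tfrac{1}{\sqrt{mq}}{\textstyle\sum}_l \tilde S_{lj}\le t\Bigr)-\Pr\bigl(\max_{1\le j\le p}\check Y_j\le t\bigr)\Bigr|\le Cn^{-c}.$$

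Finally, I would patch the three approximation errors together---small-block/remainder, coupling, and CLT---absorbing the mild normalization discrepancy between $\sqrt{mq}$ and $\sqrt n$ via the Nazarov-type Gaussian anti-concentration inequality (Lemma \ref{lem: critical value bound}(ii)). The lower variance bound $\underline{\sigma}^2(q)\ge c_1$ ensures the anti-concentration constant is at most $C\sqrt{\log p}$, so each additive perturbation $\zeta_n$ contributes at most $C\zeta_n\sqrt{\log p}=O(n^{-c})$ to the Kolmogorov distance. The main obstacle will be the coupling step: one must iterate Berbee's lemma on the $\sigma$-fields generated by the pairs $(S_l,S_l')$ carefully enough to confirm that the total-variation loss accumulates only linearly, not exponentially, in $m$; the remaining ingredients are largely mechanical applications of the tools developed in \cite{CCK12,CCK13}.
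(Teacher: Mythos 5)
Your architecture matches the paper's proof almost exactly: Bernstein blocking, independentization of the block sums at a total-variation cost of order $(m-1)b_r$, the high-dimensional CLT of \cite{CCK12} applied to the normalized independent large-block sums $\tilde S_l/\sqrt{n/m}$ with envelope $\sqrt{q}D_n$, and Gaussian anti-concentration to absorb the additive perturbations and the $\sqrt{(mq)/n}$-versus-$1$ normalization discrepancy. The only methodological difference is the independentization device: the paper invokes the Eberlein--Yu comparison inequality (Corollary 2.7 in \cite{Yu1994}), which directly bounds the difference of probabilities of events defined by the dependent blocks and their independent copies, whereas you propose an iterated Berbee coupling on an enlarged probability space. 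Both yield the same linear-in-$m$ cost, so this is a legitimate alternative route.

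Two points need repair. First, your plan to iterate Berbee's lemma on the $\sigma$-fields generated by the \emph{pairs} $(S_l,S_l')$ does not work: consecutive pairs occupy adjacent index sets $I_l\cup J_l$ and $I_{l+1}\cup J_{l+1}$ with no gap between them, so the coupling cost per step would involve $b_1$ rather than $b_r$. The correct move, and the one the paper makes, is to independentize the large blocks and the small blocks \emph{separately}: the large blocks are separated by gaps of length $r$ (cost $(m-1)b_r$) and the small blocks by gaps of length $q$ (cost $(m-1)b_q\le (m-1)b_r$). This is also the true source of the factor $2$ in $2(m-1)b_r$ --- two separate comparisons, not, as you suggest, paying the error ``in both directions'' of the Kolmogorov bound (each one-sided inequality already carries the full $2(m-1)b_r$). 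Second, and relatedly, in your first step you apply Lemma \ref{lem: maximal ineq} and Lemma \ref{lem: fuk-nagaev} to the small-block contribution $\sum_l S'_{lj}$ before any independentization; those lemmas are stated for independent vectors, and the $S'_l$ are dependent. You must first replace $\{S'_l\}$ by independent copies $\{\tilde S'_l\}$ (via Yu's comparison or a separate Berbee iteration at gap $q$) and only then invoke the maximal inequality, which is exactly the order of operations in Step 1 of the paper's proof. With these two corrections your argument goes through.
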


\begin{proof}
In this proof, $c,C$ denote generic positive constants depending only on $c_1,c_2,C_1$; their values may change from place to place.
We divide the proof into several steps.

\medskip

{\bf Step 1}. (Reduction to independence).
We wish to show that
\begin{align*}
&\Pr \left(  \max_{1 \leq j \leq p} \frac{1}{\sqrt{n}} \sum_{l=1}^{m} \tilde{S}_{lj} \leq t-Cn^{-c} \log^{-1/2} p \right) - n^{-c} - 2(m-1)b_{r} \\
&\qquad \leq \Pr(  \check{T} \leq t ) \\
&\qquad \leq \Pr \left(  \max_{1 \leq j \leq p} \frac{1}{\sqrt{n}} \sum_{l=1}^{m} \tilde{S}_{lj} \leq t+Cn^{-c} \log^{-1/2} p \right) + n^{-c}+ 2(m-1)b_{r}.
\end{align*}

We only prove the second inequality; the first inequality follows from the analogous argument.
Observe that $\sum_{i=1}^{n} X_{i} = \sum_{l=1}^{m} S_{l} + \sum_{l=1}^{m} S_{l}' + S'_{m+1}$,
so that
\[
|\max_{1 \leq j \leq p}  \sum_{i=1}^{n} X_{ij} - \max_{1 \leq j \leq p} \sum_{l=1}^{m} S_{lj} | \leq \max_{1 \leq j \leq p} | \sum_{l=1}^{m} S'_{lj} | + \max_{1 \leq j \leq p} |S'_{m+1,j}|.
\]
By Corollary 2.7 in \cite{Yu1994} \citep[see also][]{Eberlein1984}, we have
\begin{align*}
&\sup_{t \in \R} \left| \Pr \left (  \max_{1 \leq j \leq p} \sum_{l=1}^{m} S_{lj} \leq t \right ) - \Pr \left(  \max_{1 \leq j \leq p} \sum_{l=1}^{m} \tilde{S}_{lj} \leq t \right) \right| \leq (m-1) b_{r}, \\
&\sup_{t > 0} \left| \Pr \left(  \max_{1 \leq j \leq p}| \sum_{l=1}^{m} S'_{lj} | > t \right) - \Pr \left(  \max_{1 \leq j \leq p} | \sum_{l=1}^{m} \tilde{S}'_{lj} | >  t \right) \right| \leq (m-1)b_{q}.
\end{align*}
Hence for every $\delta_{1},\delta_{2} > 0$,
\begin{align*}
&\Pr (\check{T} \leq t ) \leq \Pr \left(  \max_{1 \leq j \leq p} \frac{1}{\sqrt{n}} \sum_{l=1}^{m} \tilde{S}_{lj} \leq t+\delta_{1}+\delta_{2} \right) \\
& +\Pr \left(  \max_{1 \leq j \leq p} | \frac{1}{\sqrt{n}} \sum_{l=1}^{m}\tilde{S}'_{lj} | > \delta_{1} \right ) + \Pr \left (\max_{1 \leq j \leq p} |S'_{m+1,j}| > \sqrt{n} \delta_{2} \right ) + 2(m-1)b_{r} \\
&=I+II+III+IV.
\end{align*}
Since $|S_{m+1,j}| \leq (q+r-1) D_n$ a.s.,  by taking $\delta_{2} = 2(q+r-1) D_n/\sqrt{n} \ (\leq C n^{-c} \log^{-1/2} p)$, we have $III=0$. Moreover, for every $\epsilon > 0$, by Markov's inequality, with $\delta_{1}=\epsilon^{-1} \Ep[\max_{1 \leq j \leq p} | n^{-1/2} \sum_{l=1}^{m}\tilde{S}'_{lj} |]$, $II \leq \epsilon$. It remains to bound the magnitude of $\Ep[\max_{1 \leq j \leq p} | n^{-1/2} \sum_{l=1}^{m}\tilde{S}'_{lj} |]$.
Since $\tilde{S}_{l}', 1 \leq l \leq m$, are independent with $|\tilde{S}'_{lj}| \leq rD_n$ a.s. and $\Var (\tilde{S}'_{lj}) \leq r \overline{\sigma}^{2}(r), 1 \leq l \leq m, 1 \leq j \leq p$, by Lemma \ref{lem: maximal ineq}, we have
\[
\Ep\left[\max_{1 \leq j \leq p} |\frac{1}{\sqrt{n}}  \sum_{l=1}^{m}\tilde{S}'_{lj} |\right] \leq K \left( \sqrt{(r/q) \overline{\sigma}^{2} (r)\log p} + n^{-1/2} rD_n \log p \right).
\]
where $K$ is universal (here we have used the simple fact that $m/n \leq 1/q$), so that the left side is bounded by $C n^{-2c}\log^{-1/2} p$ (by taking $c$ sufficiently small). The conclusion of this step follows from taking $\epsilon = n^{-c}$ so that $\delta_1 \leq C n^{-c}\log^{-1/2} p$.

\medskip

{\bf Step 2}. (Normal approximation to the sum of independent blocks). We wish to show that
\[
\sup_{t \in \R} \left | \Pr \left(  \max_{1 \leq j \leq p} \frac{1}{\sqrt{n}} \sum_{l=1}^{m} \tilde{S}_{lj} \leq t \right ) - \Pr \left(  \max_{1 \leq j \leq p} \sqrt{(mq)/n} \check{Y}_{j} \leq t \right ) \right | \leq Cn^{-c}.
\]

Since $\tilde{S}_{l}, 1 \leq l \leq m$, are independent, we may apply Corollary 2.1 in \cite{CCK12} (note that the covariance matrix of $\sqrt{(mq)/n}\check{Y}$ is the same as that of $n^{-1/2} \sum_{l=1}^{m} \tilde{S}_{l}$). We wish to verify the conditions of the corollary to this case. Observe that
\[
\frac{1}{\sqrt{n}} \sum_{l=1}^{m} \tilde{S}_{lj} = \frac{1}{\sqrt{m}} \sum_{l=1}^{m} \frac{\tilde{S}_{lj}}{\sqrt{n/m}},
\]
and $\sqrt{q} \leq \sqrt{n/m} \leq 2\sqrt{q}$ (recall that $q+r \leq n/2$). Hence
\[
c_{1}/4 \leq \underline{\sigma}^{2}(q)/4 \leq \Var \left (\tilde{S}_{lj}/\sqrt{n/m} \right) \leq \overline{\sigma}^{2}(q) \leq C_1,
\]
and $|\tilde{S}_{lj}/\sqrt{n/m}| \leq \sqrt{q} D_n$ a.s., so that the conditions of Corollary 2.1 (i) in \cite{CCK12} are verified with $B_n = \sqrt{q} D_n$, which leads to the assertion of this step (note that $q \leq C n^{1-c}$ so that $m \geq n/(4q) \geq C^{-1} n^{c}$).

\medskip

{\bf Step 3}. (Anti-concentration). We wish to verify that, for every $\epsilon > 0$,
\[
\sup_{t \in \R} \Pr  \left(  \Big| \max_{1 \leq j \leq p}  \check{Y}_{j}  - t  \Big| \leq \epsilon  \right) \leq C \epsilon \sqrt{1 \vee \log (p/\epsilon)}.
\]

Indeed, since $\check{Y}$ is a normal random vector with
\[
c_1 \leq \underline{\sigma}^{2}(q) \leq \Var (\check{Y}_{j}) \leq \overline{\sigma}^{2}(q) \leq C_1, 1 \leq \forall j \leq p,
\]
the desired assertion follows from application of Corollary 1 in \cite{CCK13}.

\medskip

{\bf Step 4}. (Conclusion). By Steps 1-3, we have
\[
\sup_{t \in \R} \left| \Pr ( \check{T} \leq t ) - \Pr \left(\max_{1 \leq j \leq p} \sqrt{(mq)/n} \check{Y}_{j} \leq t \right) \right| \leq C n^{-c}.
\]
It remains to replace $\sqrt{(mq)/n}$ by $1$ on the left side. Observe that
\[
1-\sqrt{(mq)/n} \leq 1-(mq)/n \leq 1-(n/(q+r)-1)(q/n) = r/(q+r) + q/n,
\]
and the right side is bounded by $C n^{-c} \log^{-1} p$. With this $c$, by Markov's inequality,
\[
\Pr \left( \left| \max_{1 \leq j \leq p} \check{Y}_{j} \right| > n^{c/2} \sqrt{\log p}\right) \leq Cn^{-c/2},
\]
as $\Ep [ | \max_{1 \leq j \leq p} \check{Y}_{j} | ] \leq C \sqrt{\log p}$, so that with probability larger than $1-Cn^{-c/2}$,
\[
(1-\sqrt{(mq)/n}) \left| \max_{1 \leq j \leq p} \check{Y}_{j} \right| \leq C'n^{-c/2} \log^{-1/2} p.
\]
By using the anti-concentration property of $\max_{1 \leq j \leq p} \check{Y}_{j}$ (see Step 3), we conclude that
\[
\sup_{t \in \R} \left|\Pr \left(\max_{1 \leq j \leq p} \sqrt{(mq)/n} \check{Y}_{j} \leq t \right) - \Pr \left( \max_{1 \leq j \leq p} \check{Y}_{j} \leq t\right) \right| \leq Cn^{-c}.
\]
This leads to the conclusion of the theorem.
\end{proof}

An inspection of the proof of the above theorem leads to the following corollary on high dimensional CLT for block sums, where the regularity conditions are weaker than those in Theorem \ref{thm: high dim CLT under dependence}.

\begin{corollary}[High dimensional CLT for block sums]
Suppose that there exist constants $C_{1} \geq c_1 > 0$ and $0 < c_2 < 1/2$ such that $c_{1} \leq \underline{\sigma}^{2}(q) \leq \overline{\sigma}^{2}(q)  \leq C_{1}$, and $\sqrt{q} D_n \log^{7/2} (pn) \leq C_{1}n^{1/2-c_{2}}$.
Then there exist constants $c,C > 0$ depending only on $c_1,c_2,C_1$ such that
\[
\sup_{t \in \R} \left | \Pr \left(  \max_{1 \leq j \leq p}  \frac{1}{\sqrt{mq}} \sum_{l=1}^{m} S_{lj} \leq t \right ) - \Pr (  \max_{1 \leq j \leq p} \check{Y}_{j} \leq t ) \right |
 \leq Cn^{-c} + (m-1)b_r.
\]
\end{corollary}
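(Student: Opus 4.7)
The plan is to mimic the proof of Theorem \ref{thm: high dim CLT under dependence} but short-circuit it, since the statement we must prove already removes the small blocks $S_l'$ and the leftover $S_{m+1}'$ from consideration. Concretely, only two ingredients are needed: (i) a coupling that replaces the dependent main blocks $\{S_l\}_{l=1}^m$ by an independent copy $\{\tilde S_l\}_{l=1}^m$, and (ii) a high-dimensional Gaussian approximation for the resulting independent sum. This also explains why neither the condition on $r/q$ nor the bound on $\overline{\sigma}^2(r)$ appear here: both were only used to control the small-block residual in Step 1 of the proof of Theorem \ref{thm: high dim CLT under dependence}.

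The first step is to apply Corollary 2.7 of \cite{Yu1994} (equivalently \cite{Eberlein1984}) to the two-block filtration separated by gaps of size $r$. Because the $m$ large blocks $I_1,\dots,I_m$ are separated by small blocks $J_l$ of length $r$, a standard induction on blocks gives
\[
\sup_{t \in \R} \left| \Pr\Big(\max_{1 \leq j \leq p} \tfrac{1}{\sqrt{mq}} {\textstyle \sum}_{l=1}^m S_{lj} \leq t\Big) - \Pr\Big(\max_{1 \leq j \leq p} \tfrac{1}{\sqrt{mq}} {\textstyle \sum}_{l=1}^m \tilde S_{lj} \leq t\Big) \right| \leq (m-1) b_r,
\]
at the cost of the $(m-1) b_r$ term that appears in the conclusion. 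Here the $\tilde S_l$ are independent with $\tilde S_l \stackrel{d}{=} S_l$.

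The second step is to invoke Corollary 2.1 of \cite{CCK12} on the independent centered random vectors $\tilde S_l/\sqrt{q}$, $l=1,\dots,m$, normalized so that
\[
\frac{1}{\sqrt{mq}} \sum_{l=1}^m \tilde S_{lj} = \frac{1}{\sqrt{m}} \sum_{l=1}^m \frac{\tilde S_{lj}}{\sqrt{q}}.
\]
By construction, $c_1 \leq \underline{\sigma}^2(q) \leq \Var(\tilde S_{lj}/\sqrt{q}) \leq \overline{\sigma}^2(q) \leq C_1$, while $|\tilde S_{lj}/\sqrt{q}| \leq \sqrt{q}\, D_n$ almost surely. The hypothesis $\sqrt{q} D_n \log^{7/2}(pn) \leq C_1 n^{1/2-c_2}$ together with $m \geq n/(4q) \geq (C_1')^{-1} n^{c}$ (from $q \leq n^{1-c}$, a consequence of the envelope bound) verifies the ``$B_n$'' conditions of that corollary with $B_n = \sqrt{q} D_n$, yielding
\[
\sup_{t \in \R} \left| \Pr\Big(\max_{1 \leq j \leq p} \tfrac{1}{\sqrt{mq}}{\textstyle \sum}_{l=1}^m \tilde S_{lj} \leq t\Big) - \Pr\Big(\max_{1 \leq j \leq p} \check Y_j \leq t\Big) \right| \leq C n^{-c},
\]
since the normalizations are chosen so that the target Gaussian covariance matches exactly that of the independent sum. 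Combining the two steps by the triangle inequality gives the claimed bound.

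No substantial obstacle arises: the only subtlety is bookkeeping the normalization so that $\check Y$ has the correct covariance and no rescaling is needed, which is why, unlike Step 4 in the proof of Theorem \ref{thm: high dim CLT under dependence}, we do not require an additional anti-concentration cleanup for the factor $\sqrt{(mq)/n}$.
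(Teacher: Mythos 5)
Your proposal is correct and matches the paper's intended argument: the paper simply remarks that the corollary follows "by inspection" of the proof of Theorem \ref{thm: high dim CLT under dependence}, and your write-up is precisely that inspection — one application of the Yu (1994) coupling to the large blocks (yielding the single $(m-1)b_r$ term) followed by Corollary 2.1 of \cite{CCK12} applied to $\tilde S_{lj}/\sqrt{q}$ with $B_n=\sqrt{q}D_n$, with Steps 1 (small-block residual), 3, and 4 (the $\sqrt{(mq)/n}$ rescaling and anti-concentration) correctly identified as unnecessary. Your accounting of which hypotheses of the theorem can be dropped, and why, is also accurate.
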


The following theorem is concerned with validity of the block multiplier bootstrap.

\begin{theorem}[Validity of block multiplier bootstrap]
\label{thm: validity of BMB prelim}
Let $\epsilon_1,\dots,\epsilon_m$ be independent standard normal random variables, independent of the data $X_{1}^{n}$. Suppose that there exist constants $0 < c_1 \leq C_1$ and $0 < c_2 < 1/2$ such that $c_{1} \leq \underline{\sigma}^{2}(q) \leq \overline{\sigma}^{2}(q)  \leq C_{1}$ and  $q D_n \log^{5/2} p \leq C_1 n^{1/2-c_2}$. Then there exist constants $c,c',C,C' > 0$ depending only on $c_1,c_2,C_1$ such that, with probability larger than $1-Cn^{-c}-(m-1) b_r$,
\begin{equation}
\sup_{t \in \R} \left | \Pr \left(  \max_{1 \leq j \leq p} \frac{1}{\sqrt{mq}} \sum_{l=1}^{m} \epsilon_{i} S_{lj} \leq t \mid X_{1}^{n} \right ) - \Pr (  \max_{1 \leq j \leq p} \check{Y}_{j} \leq t  ) \right |
\leq C' n^{-c'}.  \label{eq: conditional MB}
\end{equation}
\end{theorem}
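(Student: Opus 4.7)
The plan is to condition on the data and reduce the comparison in \eqref{eq: conditional MB} to a Gaussian-to-Gaussian comparison, then to control the resulting covariance-matrix discrepancy. Conditional on $X_1^n$, the block multiplier bootstrap statistic $(mq)^{-1/2}\sum_{l=1}^{m} \epsilon_l S_l$ is a centered Gaussian vector with covariance $\hat\Sigma := (mq)^{-1}\sum_{l=1}^{m} S_l S_l^T$, while $\check Y \sim N(0,\Sigma)$ with $\Sigma := (mq)^{-1}\sum_{l=1}^{m} \Ep[S_l S_l^T]$ and $\Sigma_{jj} \in [c_1, C_1]$ by assumption. Let $\Delta := \max_{1 \leq j,k \leq p}|\hat\Sigma_{jk}-\Sigma_{jk}|$. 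Provided $\Delta \leq c_1/2$, the Gaussian-to-Gaussian comparison inequality (e.g., combining Theorem 2 in \cite{CCK13} with the Slepian--Stein argument used in \cite{CCK12}) bounds the left-hand side of \eqref{eq: conditional MB} by $C\,\Delta^{1/3}\{1 \vee \log (p/\Delta)\}^{2/3}$. Hence it suffices to show $\Delta \leq C' qD_n\sqrt{(\log p)/n}$ with probability at least $1 - Cn^{-c} - (m-1)b_r$.

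\textbf{Paragraph 2.} To control $\Delta$, I first invoke Yu's coupling (Corollary 2.7 of \cite{Yu1994}, used also in Step 1 of the proof of Theorem \ref{thm: high dim CLT under dependence}) to replace $(S_1,\dots,S_m)$ by independent block sums $(\tilde S_1,\dots,\tilde S_m)$ with the same marginal laws, at probability cost $(m-1)b_r$. Let $\tilde\Delta$ denote the analogous quantity formed from $(\tilde S_l)$. Since $|\tilde S_{lj}| \leq qD_n$ a.s.\ and $\Ep[\tilde S_{lj}^2] \leq q\overline\sigma^2(q) \leq C_1 q$, Cauchy--Schwarz gives $\Ep[(\tilde S_{lj}\tilde S_{lk})^2] \leq (qD_n)^2 \sqrt{\Ep[\tilde S_{lj}^2]\Ep[\tilde S_{lk}^2]} \leq C_1 q^3 D_n^2$, while $|\tilde S_{lj}\tilde S_{lk}| \leq q^2 D_n^2$. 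Bernstein's inequality applied independently to each of the $p^2$ sums $(mq)^{-1}\sum_{l=1}^{m} (\tilde S_{lj}\tilde S_{lk} - \Ep[\tilde S_{lj}\tilde S_{lk}])$, together with a union bound, yields
\[
\Pr(\tilde\Delta > t) \;\leq\; 2 p^2 \exp\!\left(-\frac{m t^2}{2qD_n^2(C_1+t/3)}\right).
\]
Taking $t = C\sqrt{qD_n^2(\log p)/m}$ with $C$ sufficiently large and using $mq \geq n/4$ produces $\tilde\Delta \leq C' qD_n\sqrt{(\log p)/n}$ with probability $\geq 1-Cn^{-c}$, and the same bound for $\Delta$ holds except on an additional event of probability at most $(m-1)b_r$ coming from the coupling.

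\textbf{Paragraph 3.} Substituting this bound into Paragraph 1, the comparison is bounded above by a constant multiple of $\{qD_n(\log p)^{5/2}/\sqrt{n}\}^{1/3}$, which by the hypothesis $qD_n\log^{5/2} p \leq C_1 n^{1/2-c_2}$ is $\leq C'' n^{-c_2/3}$, yielding the theorem with $c' = c_2/3$. The principal technical obstacle is Paragraph 2: because the Gaussian-to-Gaussian comparison degrades the covariance error $\Delta$ to $\Delta^{1/3}(\log p)^{2/3}$, the Bernstein bound on $\Delta$ must be tight enough to absorb this cube-root loss, which is exactly what forces the exponent $5/2$ on $\log p$ in the scaling hypothesis on $q$, $D_n$, and $p$.
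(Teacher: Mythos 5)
Your overall architecture matches the paper's proof: both reduce \eqref{eq: conditional MB} to the Gaussian comparison bound $C\hat{\Delta}^{1/3}\{1\vee\log(p/\hat{\Delta})\}^{2/3}$ from Theorem 2 of \cite{CCK13}, both pass to independent block copies $\tilde{S}_1,\dots,\tilde{S}_m$ via Corollary 2.7 of \cite{Yu1994} at cost $(m-1)b_r$, and both then show the maximal covariance discrepancy is $O(qD_n\sqrt{(\log p)/n})$ with high probability. The one place you diverge is the concentration step for $\tilde{\Delta}$: the paper bounds $\Ep[\tilde{\Delta}]$ by the maximal inequality of Lemma \ref{lem: maximal ineq} (giving $C(n^{-1/2}qD_n\sqrt{\log p}+n^{-1}q^2D_n^2\log p)$) and then applies Markov's inequality, whereas you apply Bernstein's inequality entrywise with a union bound over the $p^2$ pairs $(j,k)$. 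Your route gives an exponential rather than polynomial tail for $\tilde{\Delta}$, at the price of one small slip: with the threshold $t=C\sqrt{qD_n^2(\log p)/m}$ the union-bound failure probability is of order $p^{2-C^2/\mathrm{const}}$, which is a negative power of $p$ but not necessarily of $n$ (e.g.\ if $p$ is bounded), so it does not by itself yield the claimed $1-Cn^{-c}$. This is easily repaired by taking $t=C D_n\sqrt{q\log(pn)/m}$; the resulting bound $\tilde{\Delta}\leq C'qD_n\sqrt{\log(pn)/n}$ still produces a final rate $Cn^{-c'}$ under the hypothesis $qD_n\log^{5/2}p\leq C_1 n^{1/2-c_2}$, since $qD_n/\sqrt{n}\leq C_1 n^{-c_2}\log^{-5/2}2$ absorbs the extra $\log n$ factors. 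Also note that the restriction ``provided $\Delta\leq c_1/2$'' in your first paragraph is not needed for the comparison theorem (only the non-degeneracy of $\check{Y}$, which is assumed, enters the constant), though it is harmless since you verify it anyway. With the $\log(pn)$ adjustment your argument is a valid, and in the tail slightly sharper, alternative to the paper's Markov-inequality step.
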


\begin{proof}
Here $c,c',C,C'$ denote generic positive constants depending only on $c_1,c_2,C_1$; their values may change from place to place.
By Theorem 2 in \cite{CCK13}, the left side on (\ref{eq: conditional MB}) is bounded by $C \hat{\Delta}^{1/3} \{ 1 \vee \log (p/\hat{\Delta}) \}^{2/3}$,
where
\[
\hat{\Delta} =\max_{1 \leq j,k \leq p} {\textstyle | (1/(mq)) \sum_{l=1}^{m} (S_{lj} S_{lk} - \Ep [S_{lj} S_{lk}])  |}.
\]
Hence it suffices to prove that $\Pr ( \hat{\Delta} > C'n^{-c'} \log^{-2} p) \leq Cn^{-c} + (m-1) b_r$ with suitable $c,c',C,C'$.  By Corollary 2.7 in \cite{Yu1994}, for every $t > 0$,
\[
\Pr (\hat{\Delta} > t) \leq \Pr (\tilde{\Delta} > t) + (m-1) b_r,
\]
where $\tilde{\Delta} = \max_{1 \leq j,k \leq p} | (1/(mq)) \sum_{l=1}^{m} (\tilde{S}_{lj} \tilde{S}_{lk} - \Ep [S_{lj} S_{lk}]) |$ (recall that $\tilde{S}_{l}, 1 \leq l \leq m$, are independent with $\tilde{S}_{l} \stackrel{d}{=} S_{l}$). Observe that $| \tilde{S}_{lj}\tilde{S}_{lk} | \leq q^{2}D_n^{2}$ a.s. and $\Ep [ (\tilde{S}_{lj} \tilde{S}_{lk})^{2} ] \leq q^{3} D_n^{2} \overline{\sigma}^{2}(q)$.
Hence by Lemma \ref{lem: maximal ineq}, we have
\[
\Ep [ \tilde{\Delta} ] \leq C(  n^{-1/2} qD_n\sqrt{\log p} + n^{-1} q^{2} D_n^{2} \log p).
\]
Since $qD_n\log^{5/2} p \leq C_1 n^{1/2-c_2}$, the right side is bounded by $C'n^{-c_2}\log^{-2} p$. The conclusion of the theorem follows from application of Markov's inequality.
\end{proof}

{\small
\begin{table}[H]\label{tab: 1}
\caption{\small{Results of Monte Carlo experiments for rejection probability. Equicorrelated data, that is $\text{var}(\varepsilon_i)=\Sigma$ where $\Sigma_{j k}=1$ if $j=k$ and $\Sigma_{j k}=\rho$ if $j\neq k$. Design 1: $b = 0$. Design 2: $b = 0.8$.}}
 
\hspace*{-1cm}
\begin{tabular}{cccccccccccc}
\hline\hline
 \tabularnewline
\multicolumn{11}{c}{Design 1 ($\theta = 0$): Null Hypothesis is True}\tabularnewline
\hline\hline
\multirow{2}{*}{$\mathcal L(\epsilon)$} & \multirow{2}{*}{$p$} & \multirow{2}{*}{$\rho$} & \multicolumn{9}{c}{test type}\tabularnewline
\cline{4-12}
 & & & $SN_1$ & $SN_2$ & $MB_1$ & $MB_2$ & $MB_3$ & $EB_1$ & $EB_2$ & $EB_3$ & $AS$  \tabularnewline
\hline\hline

\multirow{9}{*}{$T$}
&\multirow{3}{*}{200}
&0       &.042    &.042    &.046    &.046    &.052    &.042    &.041    &.048    &.104\tabularnewline
&&0.5    &.013    &.013    &.048    &.045    &.047    &.047    &.044    &.048    &.045\tabularnewline
&&0.9    &.005    &.005    &.043    &.043    &.047    &.042    &.041    &.044    &.053\tabularnewline
\cline{3-12}

&\multirow{3}{*}{500}
&0       &.036    &.035    &.049    &.046    &.051    &.044    &.042    &.047    &.132\tabularnewline
&&0.5    &.012    &.011    &.052    &.051    &.049    &.050    &.046    &.042    &.054\tabularnewline
&&0.9    &.003    &.003    &.055    &.052    &.054    &.059    &.053    &.056    &.058\tabularnewline
\cline{3-12}

&\multirow{3}{*}{1000}
&0       &.028    &.025    &.044    &.044    &.051    &.034    &.034    &.047    &.154\tabularnewline
&&0.5    &.017    &.016    &.066    &.064    &.064    &.059    &.059    &.052    &.064\tabularnewline
&&0.9    &.001    &.001    &.054    &.050    &.056    &.050    &.048    &.049    &.054\tabularnewline
\hline
\multirow{9}{*}{$U$} 
&\multirow{3}{*}{200}
&0       &.048    &.048    &.063    &.059    &.052    &.060    &.056    &.049    &.113\tabularnewline
&&0.5    &.024    &.024    &.057    &.056    &.048    &.057    &.054    &.047    &.056\tabularnewline
&&0.9    &.000    &.000    &.049    &.046    &.044    &.050    &.049    &.043    &.049\tabularnewline
\cline{3-12}

&\multirow{3}{*}{500}
&0       &.053    &.049    &.064    &.063    &.057    &.065    &.064    &.055    &.140\tabularnewline
&&0.5    &.012    &.012    &.043    &.042    &.041    &.044    &.043    &.042    &.045\tabularnewline
&&0.9    &.002    &.002    &.050    &.048    &.045    &.042    &.042    &.044    &.053\tabularnewline
\cline{3-12}

&\multirow{3}{*}{1000}
&0       &.048    &.046    &.065    &.065    &.050    &.065    &.063    &.054    &.147\tabularnewline
&&0.5    &.015    &.013    &.062    &.061    &.062    &.062    &.061    &.058    &.052\tabularnewline
&&0.9    &.000    &.000    &.052    &.050    &.050    &.051    &.049    &.048    &.051\tabularnewline

\hline\hline

 \tabularnewline
\multicolumn{11}{c}{Design 2 ($\theta = 0$): Null Hypothesis is True}\tabularnewline
\hline\hline
\multirow{2}{*}{$\mathcal L(\epsilon)$} & \multirow{2}{*}{$p$} & \multirow{2}{*}{$\rho$} & \multicolumn{9}{c}{test type}\tabularnewline
\cline{4-12}
 & & & $SN_1$ & $SN_2$ & $MB_1$ & $MB_2$ & $MB_3$ & $EB_1$ & $EB_2$ & $EB_3$ & $AS$  \tabularnewline
\hline\hline

\multirow{9}{*}{$T$}
&\multirow{3}{*}{200}
&   0    &.003    &.050    &.004    &.060    &.056    &.003    &.053    &.050    &.001\tabularnewline
&&0.5    &.003    &.031    &.012    &.056    &.052    &.011    &.055    &.052    &.003\tabularnewline
&&0.9    &.002    &.010    &.024    &.048    &.047    &.024    &.043    &.043    &.011\tabularnewline
\cline{3-12}

&\multirow{3}{*}{500}
&0       &.003    &.046    &.006    &.056    &.052    &.005    &.051    &.052    &.000\tabularnewline
&&0.5    &.003    &.022    &.009    &.046    &.044    &.011    &.045    &.043    &.004\tabularnewline
&&0.9    &.000    &.002    &.022    &.045    &.042    &.021    &.041    &.040    &.004\tabularnewline
\cline{3-12}

&\multirow{3}{*}{1000}
&0       &.003    &.033    &.004    &.042    &.040    &.003    &.036    &.036    &.000\tabularnewline
&&0.5    &.001    &.018    &.008    &.048    &.047    &.008    &.043    &.043    &.004\tabularnewline
&&0.9    &.000    &.004    &.028    &.043    &.042    &.028    &.039    &.039    &.010\tabularnewline
\hline
\multirow{9}{*}{$U$} 
&\multirow{3}{*}{200}
&   0    &.006    &.056    &.006    &.060    &.058    &.006    &.060    &.058    &.001\tabularnewline
&&0.5    &.002    &.041    &.014    &.054    &.052    &.011    &.050    &.049    &.007\tabularnewline
&&0.9    &.003    &.009    &.033    &.060    &.058    &.032    &.057    &.054    &.017\tabularnewline
\cline{3-12}

&\multirow{3}{*}{500}
&0       &.002    &.048    &.004    &.052    &.052    &.002    &.054    &.052    &.000\tabularnewline
&&0.5    &.003    &.028    &.009    &.054    &.051    &.009    &.057    &.055    &.006\tabularnewline
&&0.9    &.000    &.004    &.021    &.036    &.034    &.022    &.037    &.035    &.008\tabularnewline
\cline{3-12}

&\multirow{3}{*}{1000}
&0       &.005    &.036    &.008    &.050    &.048    &.008    &.051    &.049    &.000\tabularnewline
&&0.5    &.006    &.024    &.015    &.052    &.050    &.015    &.059    &.055    &.006\tabularnewline
&&0.9    &.000    &.002    &.026    &.052    &.049    &.028    &.050    &.046    &.011\tabularnewline

\hline
\end{tabular}
\end{table}
}

{\small
\begin{table}[H]\label{tab: 2}
\caption{\small{Results of Monte Carlo experiments for rejection probability. Autocorrelated data, that is $\text{var}(\varepsilon_i)=\Sigma$ where $\Sigma_{j k}=\rho^{|j-k|}$. Design 3: $b = 0$. Design 4: $b = 0.8$.}}

\hspace*{-1cm}
\begin{tabular}{cccccccccccc}
\hline\hline
 \tabularnewline
\multicolumn{11}{c}{Design 3 ($\theta = 0$): Null Hypothesis is True}\tabularnewline
\hline\hline
\multirow{2}{*}{$\mathcal L(\epsilon)$} & \multirow{2}{*}{$p$} & \multirow{2}{*}{$\rho$} & \multicolumn{9}{c}{test type}\tabularnewline
\cline{4-12}
 & & & $SN_1$ & $SN_2$ & $MB_1$ & $MB_2$ & $MB_3$ & $EB_1$ & $EB_2$ & $EB_3$ & $AS$  \tabularnewline
\hline\hline

\multirow{9}{*}{$T$}
&\multirow{3}{*}{200}
&0       &.041    &.038    &.050    &.047    &.043    &.046    &.045    &.041    &.097\tabularnewline
&&0.5    &.028    &.028    &.041    &.041    &.049    &.035    &.033    &.053    &.077\tabularnewline
&&0.9    &.023    &.022    &.063    &.062    &.057    &.062    &.060    &.059    &.075\tabularnewline
\cline{3-12}

&\multirow{3}{*}{500}
&0       &.031    &.029    &.048    &.044    &.044    &.044    &.041    &.042    &.123\tabularnewline
&&0.5    &.043    &.041    &.053    &.052    &.040    &.047    &.046    &.041    &.117\tabularnewline
&&0.9    &.024    &.023    &.047    &.046    &.041    &.045    &.042    &.040    &.067\tabularnewline
\cline{3-12}

&\multirow{3}{*}{1000}
&0       &.039    &.039    &.056    &.054    &.044    &.049    &.047    &.043    &.151\tabularnewline
&&0.5    &.045    &.042    &.061    &.060    &.037    &.055    &.055    &.033    &.145\tabularnewline
&&0.9    &.022    &.020    &.052    &.052    &.048    &.053    &.048    &.044    &.083\tabularnewline
\hline
\multirow{9}{*}{$U$} 
&\multirow{3}{*}{200}
&0       &.047    &.040    &.056    &.054    &.060    &.054    &.054    &.063    &.121\tabularnewline
&&0.5    &.040    &.039    &.049    &.047    &.060    &.051    &.048    &.060    &.095\tabularnewline
&&0.9    &.029    &.025    &.066    &.064    &.058    &.067    &.063    &.063    &.078\tabularnewline
\cline{3-12}

&\multirow{3}{*}{500}
&0       &.051    &.049    &.073    &.073    &.064    &.077    &.073    &.065    &.142\tabularnewline
&&0.5    &.044    &.043    &.065    &.061    &.063    &.059    &.059    &.064    &.125\tabularnewline
&&0.9    &.014    &.014    &.051    &.048    &.055    &.051    &.050    &.052    &.085\tabularnewline
\cline{3-12}

&\multirow{3}{*}{1000}
&0       &.037    &.037    &.051    &.050    &.054    &.055    &.051    &.062    &.151\tabularnewline
&&0.5    &.044    &.041    &.064    &.059    &.061    &.064    &.059    &.060    &.139\tabularnewline
&&0.9    &.028    &.028    &.066    &.063    &.053    &.067    &.066    &.056    &.102\tabularnewline

\hline\hline

 \tabularnewline
\multicolumn{11}{c}{Design 4 ($\theta = 0$): Null Hypothesis is True}\tabularnewline
\hline\hline
\multirow{2}{*}{$\mathcal L(\epsilon)$} & \multirow{2}{*}{$p$} & \multirow{2}{*}{$\rho$} & \multicolumn{9}{c}{test type}\tabularnewline
\cline{4-12}
 & & & $SN_1$ & $SN_2$ & $MB_1$ & $MB_2$ & $MB_3$ & $EB_1$ & $EB_2$ & $EB_3$ & $AS$  \tabularnewline
\hline\hline

\multirow{9}{*}{$T$}
&\multirow{3}{*}{200}
&0       &.004    &.038    &.004    &.045    &.041    &.004    &.044    &.044    &.003\tabularnewline
&&0.5    &.009    &.057    &.012    &.068    &.066    &.010    &.062    &.063    &.010\tabularnewline
&&0.9    &.002    &.025    &.007    &.051    &.051    &.008    &.051    &.050    &.022\tabularnewline
\cline{3-12}

&\multirow{3}{*}{500}
&0       &.005    &.030    &.006    &.036    &.036    &.005    &.034    &.033    &.001\tabularnewline
&&0.5    &.002    &.033    &.003    &.044    &.043    &.003    &.044    &.041    &.001\tabularnewline
&&0.9    &.000    &.023    &.002    &.055    &.053    &.002    &.057    &.056    &.018\tabularnewline
\cline{3-12}

&\multirow{3}{*}{1000}
&0       &.001    &.041    &.002    &.049    &.047    &.002    &.043    &.045    &.000\tabularnewline
&&0.5    &.007    &.048    &.009    &.054    &.052    &.007    &.053    &.053    &.001\tabularnewline
&&0.9    &.003    &.029    &.004    &.062    &.062    &.004    &.064    &.062    &.013\tabularnewline
\hline
\multirow{9}{*}{$U$} 
&\multirow{3}{*}{200}
&0       &.006    &.046    &.007    &.048    &.047    &.007    &.051    &.049    &.004\tabularnewline
&&0.5    &.003    &.039    &.004    &.053    &.052    &.004    &.050    &.049    &.009\tabularnewline
&&0.9    &.002    &.022    &.004    &.048    &.044    &.003    &.049    &.046    &.021\tabularnewline
\cline{3-12}

&\multirow{3}{*}{500}
&0       &.003    &.038    &.005    &.048    &.046    &.005    &.049    &.045    &.000\tabularnewline
&&0.5    &.003    &.035    &.006    &.049    &.046    &.005    &.046    &.045    &.002\tabularnewline
&&0.9    &.003    &.021    &.006    &.048    &.045    &.006    &.048    &.046    &.015\tabularnewline
\cline{3-12}

&\multirow{3}{*}{1000}
&0       &.004    &.045    &.006    &.052    &.051    &.007    &.056    &.054    &.000\tabularnewline
&&0.5    &.003    &.028    &.005    &.047    &.046    &.005    &.045    &.045    &.000\tabularnewline
&&0.9    &.004    &.025    &.009    &.051    &.049    &.009    &.055    &.053    &.010\tabularnewline

\hline
\end{tabular}
\end{table}
}

{\small
\begin{table}[H]\label{tab: 3}
\caption{\small{Results of Monte Carlo experiments for rejection probability.  Equicorrelated data, that is $\text{var}(\varepsilon_i)=\Sigma$ where $\Sigma_{j k}=1$ if $j=k$ and $\Sigma_{j k}=\rho$ if $j\neq k$. Design 5: $b = 0$. Design 6: $b = 0.8$.}}
\hspace*{-1cm}
\begin{tabular}{cccccccccccc}
\hline\hline
 \tabularnewline
\multicolumn{11}{c}{Design 5 ($\theta = 0.07$): Null Hypothesis is False}\tabularnewline
\hline\hline
\multirow{2}{*}{$\mathcal L(\epsilon)$} & \multirow{2}{*}{$p$} & \multirow{2}{*}{$\rho$} & \multicolumn{9}{c}{test type}\tabularnewline
\cline{4-12}
 & & & $SN_1$ & $SN_2$ & $MB_1$ & $MB_2$ & $MB_3$ & $EB_1$ & $EB_2$ & $EB_3$ & $AS$  \tabularnewline
\hline\hline

\multirow{9}{*}{$T$}
&\multirow{3}{*}{200}
&0       &.447    &.437    &.518    &.501    &.842    &.476    &.467    &.830    &.999\tabularnewline
&&0.5    &.176    &.174    &.309    &.301    &.489    &.300    &.292    &.480    &.130\tabularnewline
&&0.9    &.050    &.047    &.332    &.321    &.392    &.326    &.318    &.393    &.096\tabularnewline
\cline{3-12}

&\multirow{3}{*}{500}
&0       &.538    &.529    &.597    &.587    &.922    &.570    &.562    &.914    &.999\tabularnewline
&&0.5    &.187    &.184    &.333    &.329    &.501    &.333    &.325    &.493    &.134\tabularnewline
&&0.9    &.043    &.043    &.344    &.338    &.407    &.336    &.333    &.400    &.099\tabularnewline
\cline{3-12}

&\multirow{3}{*}{1000}
&0       &.594    &.581    &.681    &.665    &.954    &.635    &.625    &.941    &.999\tabularnewline
&&0.5    &.191    &.187    &.401    &.393    &.517    &.379    &.366    &.518    &.153\tabularnewline
&&0.9    &.042    &.040    &.290    &.284    &.335    &.286    &.281    &.332    &.104\tabularnewline
\hline
\multirow{9}{*}{$U$} 
&\multirow{3}{*}{200}
&0       &.469    &.456    &.537    &.521    &.846    &.532    &.526    &.855    &.999\tabularnewline
&&0.5    &.204    &.199    &.354    &.346    &.525    &.358    &.353    &.523    &.136\tabularnewline
&&0.9    &.051    &.050    &.316    &.311    &.374    &.314    &.309    &.374    &.097\tabularnewline
\cline{3-12}

&\multirow{3}{*}{500}
&0       &.529    &.514    &.605    &.596    &.907    &.617    &.610    &.907    &.999\tabularnewline
&&0.5    &.187    &.184    &.356    &.348    &.505    &.351    &.345    &.503    &.138\tabularnewline
&&0.9    &.045    &.045    &.337    &.332    &.378    &.339    &.330    &.381    &.114\tabularnewline
\cline{3-12}

&\multirow{3}{*}{1000}
&0       &.572    &.562    &.659    &.646    &.934    &.667    &.658    &.942    &.999\tabularnewline
&&0.5    &.174    &.170    &.345    &.340    &.520    &.356    &.343    &.509    &.128\tabularnewline
&&0.9    &.033    &.032    &.340    &.334    &.371    &.336    &.331    &.373    &.101\tabularnewline

\hline\hline

 \tabularnewline
\multicolumn{11}{c}{Design 6 ($\theta = 0.07$): Null Hypothesis is False}\tabularnewline
\hline\hline
\multirow{2}{*}{$\mathcal L(\epsilon)$} & \multirow{2}{*}{$p$} & \multirow{2}{*}{$\rho$} & \multicolumn{9}{c}{test type}\tabularnewline
\cline{4-12}
 & & & $SN_1$ & $SN_2$ & $MB_1$ & $MB_2$ & $MB_3$ & $EB_1$ & $EB_2$ & $EB_3$ & $AS$  \tabularnewline
\hline\hline

\multirow{9}{*}{$T$}
&\multirow{3}{*}{200}
&0       &.244    &.737    &.286    &.767    &.762    &.259    &.759    &.750    &.966\tabularnewline
&&0.5    &.143    &.407    &.265    &.500    &.491    &.256    &.496    &.490    &.224\tabularnewline
&&0.9    &.052    &.176    &.290    &.387    &.379    &.295    &.388    &.384    &.187\tabularnewline
\cline{3-12}

&\multirow{3}{*}{500}
&0       &.318    &.851    &.369    &.871    &.867    &.333    &.856    &.864    &.999\tabularnewline
&&0.5    &.116    &.368    &.264    &.509    &.502    &.255    &.501    &.493    &.197\tabularnewline
&&0.9    &.038    &.135    &.303    &.389    &.387    &.300    &.384    &.373    &.188\tabularnewline
\cline{3-12}

&\multirow{3}{*}{1000}
&0       &.368    &.892    &.452    &.923    &.920    &.402    &.897    &.909    &.999\tabularnewline
&&0.5    &.115    &.357    &.263    &.513    &.504    &.259    &.500    &.501    &.193\tabularnewline
&&0.9    &.032    &.092    &.281    &.355    &.348    &.277    &.352    &.345    &.174\tabularnewline
\hline
\multirow{9}{*}{$U$} 
&\multirow{3}{*}{200}
&0       &.249    &.751    &.294    &.765    &.756    &.292    &.768    &.761    &.962\tabularnewline
&&0.5    &.147    &.416    &.255    &.518    &.507    &.260    &.511    &.507    &.217\tabularnewline
&&0.9    &.034    &.155    &.281    &.389    &.376    &.283    &.380    &.373    &.181\tabularnewline
\cline{3-12}

&\multirow{3}{*}{500}
&0       &.315    &.832    &.377    &.855    &.849    &.375    &.862    &.853    &.999\tabularnewline
&&0.5    &.120    &.360    &.246    &.486    &.482    &.250    &.487    &.476    &.199\tabularnewline
&&0.9    &.035    &.110    &.293    &.385    &.376    &.294    &.382    &.376    &.163\tabularnewline
\cline{3-12}

&\multirow{3}{*}{1000}
&0       &.351    &.890    &.430    &.917    &.911    &.430    &.920    &.918    &.999\tabularnewline
&&0.5    &.132    &.389    &.290    &.532    &.525    &.292    &.537    &.533    &.221\tabularnewline
&&0.9    &.028    &.107    &.323    &.390    &.383    &.323    &.396    &.391    &.194\tabularnewline

\hline
\end{tabular}
\end{table}
}

{\small
\begin{table}[H]\label{tab: 4}
\caption{\small{Results of Monte Carlo experiments for rejection probability. Autocorrelated data, that is $\text{var}(\varepsilon_i)=\Sigma$ where $\Sigma_{j k}=\rho^{|j-k|}$. Design 7: $b = 0$. Design 8: $b = 0.8$.}}

\hspace*{-1cm}
\begin{tabular}{cccccccccccc}
\hline\hline
 \tabularnewline
\multicolumn{11}{c}{Design 7 ($\theta = 0.07$): Null Hypothesis is False}\tabularnewline
\hline\hline
\multirow{2}{*}{$\mathcal L(\epsilon)$} & \multirow{2}{*}{$p$} & \multirow{2}{*}{$\rho$} & \multicolumn{9}{c}{test type}\tabularnewline
\cline{4-12}
 & & & $SN_1$ & $SN_2$ & $MB_1$ & $MB_2$ & $MB_3$ & $EB_1$ & $EB_2$ & $EB_3$ & $AS$  \tabularnewline
\hline\hline

\multirow{9}{*}{$T$}
&\multirow{3}{*}{200}
&0       &.429    &.420    &.489    &.481    &.826    &.464    &.458    &.814    &.999\tabularnewline
&&0.5    &.395    &.385    &.452    &.443    &.762    &.442    &.433    &.762    &.934\tabularnewline
&&0.9    &.183    &.180    &.303    &.295    &.531    &.301    &.289    &.535    &.391\tabularnewline
\cline{3-12}

&\multirow{3}{*}{500}
&0       &.560    &.548    &.631    &.621    &.924    &.598    &.589    &.913    &.999\tabularnewline
&&0.5    &.495    &.484    &.562    &.554    &.875    &.552    &.538    &.869    &.999\tabularnewline
&&0.9    &.243    &.238    &.393    &.382    &.663    &.391    &.382    &.658    &.655\tabularnewline
\cline{3-12}

&\multirow{3}{*}{1000}
&0       &.612    &.597    &.695    &.688    &.951    &.649    &.639    &.940    &.999\tabularnewline
&&0.5    &.586    &.576    &.693    &.682    &.938    &.663    &.652    &.931    &.999\tabularnewline
&&0.9    &.261    &.256    &.428    &.413    &.732    &.414    &.406    &.728    &.860\tabularnewline
\hline
\multirow{9}{*}{$U$} 
&\multirow{3}{*}{200}
&0       &.445    &.433    &.499    &.484    &.830    &.504    &.496    &.827    &.999\tabularnewline
&&0.5    &.392    &.382    &.454    &.442    &.745    &.455    &.444    &.744    &.930\tabularnewline
&&0.9    &.178    &.176    &.299    &.288    &.537    &.305    &.295    &.534    &.399\tabularnewline
\cline{3-12}

&\multirow{3}{*}{500}
&0       &.526    &.520    &.611    &.600    &.903    &.611    &.602    &.904    &.999\tabularnewline
&&0.5    &.489    &.475    &.558    &.548    &.845    &.561    &.552    &.851    &.999\tabularnewline
&&0.9    &.241    &.235    &.358    &.351    &.639    &.363    &.355    &.635    &.657\tabularnewline
\cline{3-12}

&\multirow{3}{*}{1000}
&0       &.604    &.595    &.703    &.683    &.950    &.702    &.694    &.953    &.999\tabularnewline
&&0.5    &.541    &.526    &.630    &.619    &.912    &.621    &.616    &.914    &.999\tabularnewline
&&0.9    &.272    &.267    &.445    &.433    &.740    &.440    &.421    &.746    &.890\tabularnewline

\hline\hline

 \tabularnewline
\multicolumn{11}{c}{Design 8 ($\theta = 0.07$): Null Hypothesis is False}\tabularnewline
\hline\hline
\multirow{2}{*}{$\mathcal L(\epsilon)$} & \multirow{2}{*}{$p$} & \multirow{2}{*}{$\rho$} & \multicolumn{9}{c}{test type}\tabularnewline
\cline{4-12}
 & & & $SN_1$ & $SN_2$ & $MB_1$ & $MB_2$ & $MB_3$ & $EB_1$ & $EB_2$ & $EB_3$ & $AS$  \tabularnewline
\hline\hline

\multirow{9}{*}{$T$}
&\multirow{3}{*}{200}
&0       &.231    &.731    &.274    &.758    &.753    &.257    &.746    &.741    &.968\tabularnewline
&&0.5    &.224    &.633    &.252    &.666    &.660    &.249    &.664    &.658    &.770\tabularnewline
&&0.9    &.095    &.316    &.167    &.472    &.464    &.167    &.473    &.465    &.368\tabularnewline
\cline{3-12}

&\multirow{3}{*}{500}
&0       &.338    &.842    &.387    &.866    &.861    &.368    &.859    &.859    &.999\tabularnewline
&&0.5    &.274    &.767    &.332    &.809    &.802    &.318    &.801    &.800    &.972\tabularnewline
&&0.9    &.118    &.387    &.196    &.557    &.552    &.196    &.552    &.546    &.528\tabularnewline
\cline{3-12}

&\multirow{3}{*}{1000}
&0       &.363    &.907    &.435    &.933    &.930    &.398    &.915    &.920    &.999\tabularnewline
&&0.5    &.333    &.856    &.403    &.899    &.893    &.382    &.880    &.882    &.999\tabularnewline
&&0.9    &.171    &.487    &.266    &.661    &.656    &.264    &.661    &.654    &.724\tabularnewline
\hline
\multirow{9}{*}{$U$} 
&\multirow{3}{*}{200}
&0       &.249    &.726    &.292    &.751    &.739    &.294    &.755    &.747    &.957\tabularnewline
&&0.5    &.203    &.650    &.240    &.697    &.688    &.246    &.698    &.683    &.793\tabularnewline
&&0.9    &.091    &.311    &.159    &.457    &.446    &.164    &.457    &.448    &.385\tabularnewline
\cline{3-12}

&\multirow{3}{*}{500}
&0       &.305    &.839    &.360    &.877    &.869    &.370    &.864    &.860    &.999\tabularnewline
&&0.5    &.263    &.748    &.316    &.802    &.795    &.321    &.809    &.795    &.970\tabularnewline
&&0.9    &.142    &.407    &.218    &.584    &.575    &.216    &.575    &.571    &.538\tabularnewline
\cline{3-12}

&\multirow{3}{*}{1000}
&0       &.345    &.898    &.420    &.914    &.910    &.421    &.918    &.915    &.999\tabularnewline
&&0.5    &.329    &.809    &.387    &.857    &.850    &.389    &.862    &.859    &.999\tabularnewline
&&0.9    &.174    &.480    &.269    &.654    &.646    &.270    &.652    &.640    &.716\tabularnewline

\hline
\end{tabular}
\end{table}
}

{\small
\begin{table}[H]\label{tab: 5a}
\caption{\small{Results of Monte Carlo experiments for rejection probability. Two-step MB method.}}

\hspace*{-1cm}
\begin{tabular}{ccccccccccc}
\hline\hline
 \tabularnewline
\multicolumn{11}{c}{$p=200$}\tabularnewline

 \hline\hline

 \multirow{2}{*}{$b$}   &  \multicolumn{10}{c}{$\beta$} \tabularnewline
\cline{2-11}
   &  .001  & .002   & .003   & .004   & .005   & .006   & .007   & .008   & .009   & .010 \tabularnewline
 \hline\hline
   
.05    &.289    &.279    &.269    &.262    &.256    &.245    &.238    &.231    &.226    &.221\tabularnewline
.10    &.272    &.262    &.254    &.247    &.242    &.231    &.225    &.218    &.213    &.208\tabularnewline
.15    &.272    &.262    &.254    &.247    &.242    &.231    &.225    &.218    &.213    &.208\tabularnewline
.20    &.272    &.262    &.254    &.247    &.242    &.231    &.225    &.218    &.213    &.208\tabularnewline
.25    &.272    &.262    &.254    &.247    &.242    &.231    &.225    &.219    &.213    &.208\tabularnewline
.30    &.272    &.263    &.255    &.249    &.243    &.232    &.227    &.222    &.214    &.210\tabularnewline
.35    &.272    &.268    &.260    &.255    &.253    &.243    &.237    &.230    &.226    &.215\tabularnewline
.40    &.293    &.289    &.280    &.285    &.278    &.277    &.273    &.271    &.264    &.257\tabularnewline
.45    &.354    &.364    &.371    &.373    &.377    &.377    &.377    &.376    &.374    &.370\tabularnewline
.50    &.479    &.493    &.513    &.527    &.526    &.529    &.525    &.525    &.524    &.519\tabularnewline
.55    &.627    &.642    &.651    &.658    &.656    &.647    &.644    &.636    &.625    &.613\tabularnewline
.60    &.731    &.730    &.728    &.723    &.710    &.702    &.688    &.677    &.659    &.644\tabularnewline
.65    &.757    &.750    &.741    &.732    &.722    &.711    &.699    &.686    &.665    &.648\tabularnewline
.70    &.765    &.754    &.742    &.733    &.722    &.712    &.700    &.686    &.666    &.648\tabularnewline
.75    &.766    &.754    &.742    &.733    &.722    &.712    &.700    &.686    &.666    &.648\tabularnewline
.80    &.766    &.754    &.742    &.733    &.722    &.712    &.700    &.686    &.666    &.648\tabularnewline

 \hline\hline
  \tabularnewline
 \multicolumn{11}{c}{$p=1000$}\tabularnewline
 \hline\hline
 
 \multirow{2}{*}{$b$}   &  \multicolumn{10}{c}{$\beta$} \tabularnewline
\cline{2-11}
   &  .001  & .002   & .003   & .004   & .005   & .006   & .007   & .008   & .009   & .010 \tabularnewline
 \hline\hline

.05    &.455    &.445    &.439    &.425    &.411    &.397    &.381    &.370    &.356    &.349\tabularnewline
.10    &.442    &.432    &.426    &.413    &.399    &.385    &.371    &.361    &.347    &.340\tabularnewline
.15    &.442    &.432    &.426    &.413    &.399    &.385    &.371    &.361    &.347    &.340\tabularnewline
.20    &.442    &.432    &.426    &.413    &.399    &.385    &.371    &.361    &.347    &.340\tabularnewline
.25    &.442    &.432    &.426    &.413    &.399    &.385    &.371    &.361    &.347    &.340\tabularnewline
.30    &.442    &.432    &.426    &.413    &.399    &.385    &.371    &.361    &.347    &.340\tabularnewline
.35    &.442    &.434    &.427    &.417    &.401    &.387    &.375    &.363    &.353    &.346\tabularnewline
.40    &.455    &.445    &.438    &.430    &.425    &.410    &.398    &.389    &.377    &.358\tabularnewline
.45    &.486    &.485    &.483    &.478    &.476    &.470    &.465    &.460    &.457    &.446\tabularnewline
.50    &.554    &.574    &.581    &.586    &.586    &.588    &.587    &.586    &.585    &.577\tabularnewline
.55    &.710    &.743    &.757    &.770    &.769    &.769    &.767    &.764    &.756    &.746\tabularnewline
.60    &.849    &.869    &.876    &.877    &.872    &.864    &.852    &.840    &.827    &.820\tabularnewline
.65    &.917    &.921    &.918    &.908    &.903    &.892    &.882    &.873    &.863    &.843\tabularnewline
.70    &.925    &.924    &.920    &.913    &.907    &.895    &.884    &.875    &.865    &.848\tabularnewline
.75    &.927    &.925    &.922    &.913    &.907    &.895    &.884    &.875    &.865    &.848\tabularnewline
.80    &.927    &.925    &.922    &.913    &.907    &.895    &.884    &.875    &.865    &.848\tabularnewline

\hline
\end{tabular}
\end{table}
}

{\small
\begin{table}[H]\label{tab: 6}
\caption{\small{Results of Monte Carlo experiments for rejection probability. Three-step MB method.}}

\hspace*{-1cm}
\begin{tabular}{ccccccccccc}
\hline\hline
 \tabularnewline
 \multicolumn{11}{c}{$p=200$}\tabularnewline
 \hline\hline

 \multirow{2}{*}{$b$}   &  \multicolumn{10}{c}{$\beta$} \tabularnewline
\cline{2-11}
   &  .001  & .002   & .003   & .004   & .005   & .006   & .007   & .008   & .009   & .010 \tabularnewline
 \hline\hline

.05    &.642    &.619    &.601    &.567    &.536    &.496    &.465    &.425    &.370    &.291\tabularnewline
.10    &.601    &.578    &.561    &.529    &.499    &.464    &.437    &.398    &.349    &.276\tabularnewline
.15    &.601    &.578    &.561    &.529    &.499    &.464    &.437    &.398    &.349    &.276\tabularnewline
.20    &.601    &.578    &.561    &.529    &.499    &.464    &.437    &.398    &.349    &.276\tabularnewline
.25    &.601    &.578    &.561    &.529    &.499    &.464    &.437    &.398    &.349    &.276\tabularnewline
.30    &.601    &.578    &.561    &.529    &.499    &.464    &.437    &.398    &.349    &.276\tabularnewline
.35    &.601    &.578    &.561    &.529    &.499    &.464    &.437    &.398    &.350    &.277\tabularnewline
.40    &.601    &.581    &.562    &.533    &.502    &.469    &.441    &.408    &.355    &.283\tabularnewline
.45    &.604    &.594    &.575    &.554    &.522    &.492    &.460    &.430    &.369    &.314\tabularnewline
.50    &.634    &.623    &.604    &.585    &.569    &.543    &.501    &.467    &.421    &.362\tabularnewline
.55    &.680    &.670    &.656    &.634    &.610    &.588    &.545    &.518    &.470    &.398\tabularnewline
.60    &.729    &.712    &.696    &.671    &.639    &.613    &.575    &.537    &.491    &.423\tabularnewline
.65    &.749    &.731    &.709    &.685    &.647    &.618    &.583    &.540    &.494    &.426\tabularnewline
.70    &.753    &.733    &.712    &.686    &.648    &.619    &.583    &.540    &.494    &.427\tabularnewline
.75    &.754    &.733    &.712    &.686    &.648    &.619    &.583    &.540    &.494    &.427\tabularnewline
.80    &.754    &.733    &.712    &.686    &.648    &.619    &.583    &.540    &.494    &.427\tabularnewline

 \hline\hline
  \tabularnewline
 \multicolumn{11}{c}{$p=1000$}\tabularnewline
 \hline\hline
 
 \multirow{2}{*}{$b$}   &  \multicolumn{10}{c}{$\beta$} \tabularnewline
\cline{2-11}
   &  .001  & .002   & .003   & .004   & .005   & .006   & .007   & .008   & .009   & .010 \tabularnewline
 \hline\hline

.05    &.809    &.794    &.770    &.752    &.719    &.676    &.631    &.594    &.534    &.473\tabularnewline
.10    &.790    &.774    &.751    &.730    &.698    &.657    &.611    &.575    &.518    &.460\tabularnewline
.15    &.790    &.774    &.751    &.730    &.698    &.657    &.611    &.575    &.518    &.460\tabularnewline
.20    &.790    &.774    &.751    &.730    &.698    &.657    &.611    &.575    &.518    &.460\tabularnewline
.25    &.790    &.774    &.751    &.730    &.698    &.657    &.611    &.575    &.518    &.460\tabularnewline
.30    &.790    &.774    &.751    &.730    &.698    &.657    &.611    &.575    &.518    &.460\tabularnewline
.35    &.790    &.774    &.751    &.730    &.698    &.657    &.611    &.575    &.518    &.460\tabularnewline
.40    &.790    &.774    &.751    &.732    &.699    &.657    &.612    &.577    &.518    &.463\tabularnewline
.45    &.792    &.779    &.757    &.738    &.707    &.664    &.620    &.581    &.529    &.469\tabularnewline
.50    &.802    &.790    &.773    &.756    &.725    &.688    &.648    &.606    &.554    &.496\tabularnewline
.55    &.838    &.820    &.809    &.789    &.777    &.748    &.709    &.661    &.605    &.543\tabularnewline
.60    &.879    &.874    &.863    &.843    &.812    &.789    &.764    &.707    &.650    &.583\tabularnewline
.65    &.918    &.908    &.888    &.871    &.842    &.809    &.780    &.729    &.674    &.604\tabularnewline
.70    &.924    &.913    &.894    &.875    &.848    &.811    &.782    &.737    &.675    &.607\tabularnewline
.75    &.925    &.913    &.895    &.875    &.848    &.811    &.782    &.737    &.675    &.607\tabularnewline
.80    &.925    &.913    &.895    &.875    &.848    &.811    &.782    &.737    &.675    &.607\tabularnewline

\hline
\end{tabular}
\end{table}
}

{\small
\begin{table}[H]\label{tab: 7}
\caption{\small{Results of Monte Carlo experiments for rejection probability. Market structure model.}}

\hspace*{-1cm}
\begin{tabular}{ccccccccccc}
\hline\hline
 \tabularnewline
 
\multirow{2}{*}{$\frac{\Delta\theta}{0.25}$}&\multirow{2}{*}{CT/GH}   & \multirow{2}{*}{$n$} &  \multicolumn{8}{c}{test type} \tabularnewline
\cline{4-11}
& & & $SN_1$ & $SN_2$ & $MB_1$ & $MB_2$ & $MB_3$ & $EB_1$ & $EB_2$ & $EB_3$  \tabularnewline
\hline\hline
 
\multirow{6}{*}{$(0,0,0)$}
&\multirow{3}{*}{CT}

&1000    &.027    &.027    &.028    &.028    &.011    &.026    &.027    &.009\tabularnewline
&&2000    &.036    &.037    &.038    &.038    &.008    &.036    &.037    &.009\tabularnewline
&&5000    &.024    &.029    &.028    &.032    &.035    &.026    &.034    &.031\tabularnewline
\cline{2-11}

&\multirow{3}{*}{GH} 
&1000    &.021    &.021    &.022    &.021    &.004    &.022    &.021    &.003\tabularnewline
&&2000    &.006    &.006    &.011    &.011    &.000    &.011    &.011    &.000\tabularnewline
&&5000    &.001    &.005    &.010    &.013    &.013    &.010    &.013    &.013\tabularnewline
\cline{1-11}

\multirow{6}{*}{$(1,0,0)$}

&\multirow{3}{*}{CT}
&1000    &.168    &.141    &.154    &.156    &.027    &.135    &.136    &.028\tabularnewline
&&2000    &.183    &.186    &.189    &.209    &.213    &.188    &.208    &.208\tabularnewline
&&5000    &.249    &.302    &.271    &.307    &.304    &.279    &.306    &.307\tabularnewline
\cline{2-11}

&\multirow{3}{*}{GH}
&1000    &.086    &.086    &.124    &.124    &.069    &.121    &.120    &.071\tabularnewline
&&2000    &.169    &.169    &.242    &.254    &.253    &.231    &.253    &.257\tabularnewline
&&5000    &.414    &.480    &.579    &.630    &.630    &.569    &.619    &.634\tabularnewline
\cline{1-11}

\multirow{6}{*}{$(-1,0,0)$}
&\multirow{3}{*}{CT}

&1000    &.066    &.066    &.072    &.073    &.074    &.070    &.070    &.075\tabularnewline
&&2000    &.164    &.179    &.176    &.203    &.193    &.172    &.194    &.192\tabularnewline
&&5000    &.611    &.684    &.628    &.704    &.700    &.618    &.702    &.704\tabularnewline
\cline{2-11}

&\multirow{3}{*}{GH}
 
&1000    &.079    &.079    &.160    &.159    &.158    &.155    &.153    &.158\tabularnewline
&&2000    &.327    &.350    &.520    &.546    &.559    &.527    &.548    &.549\tabularnewline
&&5000    &.953    &.972    &.984    &.994    &.994    &.989    &.994    &.995\tabularnewline
\cline{1-11}

\multirow{6}{*}{$(0,1,0)$}
&\multirow{3}{*}{CT}
&1000    &.205    &.205    &.203    &.202    &.203    &.204    &.205    &.204\tabularnewline
&&2000    &.289    &.302    &.296    &.300    &.304    &.298    &.302    &.304\tabularnewline
&&5000    &.547    &.554    &.531    &.566    &.573    &.520    &.570    &.574\tabularnewline
\cline{2-11}

&\multirow{3}{*}{GH}
&1000    &.097    &.091    &.180    &.174    &.166    &.176    &.171    &.156\tabularnewline
&&2000    &.145    &.145    &.248    &.247    &.248    &.246    &.251    &.245\tabularnewline
&&5000    &.330    &.358    &.484    &.525    &.515    &.484    &.524    &.522\tabularnewline
\cline{1-11}

\multirow{6}{*}{$(0,-1,0)$}
&\multirow{3}{*}{CT}

&1000    &.031    &.033    &.041    &.042    &.042    &.043    &.041    &.036\tabularnewline
&&2000    &.064    &.075    &.068    &.078    &.076    &.067    &.075    &.078\tabularnewline
&&5000    &.323    &.439    &.336    &.479    &.470    &.337    &.460    &.466\tabularnewline
\cline{2-11}

&\multirow{3}{*}{GH}
 
&1000    &.006    &.006    &.014    &.014    &.011    &.015    &.015    &.009\tabularnewline
&&2000    &.013    &.015    &.040    &.040    &.039    &.040    &.048    &.041\tabularnewline
&&5000    &.113    &.179    &.256    &.343    &.358    &.252    &.351    &.357\tabularnewline
\cline{1-11}

\multirow{6}{*}{$(0,0,1)$}
&\multirow{3}{*}{CT}

&1000    &.212    &.212    &.211    &.212    &.069    &.211    &.213    &.070\tabularnewline
&&2000    &.377    &.377    &.356    &.363    &.363    &.341    &.357    &.365\tabularnewline
&&5000    &.700    &.762    &.720    &.764    &.764    &.719    &.766    &.768\tabularnewline
\cline{2-11}

&\multirow{3}{*}{GH}
 
&1000    &.116    &.116    &.205    &.205    &.080    &.202    &.203    &.082\tabularnewline
&&2000    &.201    &.201    &.289    &.292    &.268    &.287    &.291    &.262\tabularnewline
&&5000    &.496    &.549    &.657    &.698    &.703    &.654    &.702    &.704\tabularnewline
\cline{1-11}

\multirow{6}{*}{$(0,0,-1)$}
&\multirow{3}{*}{CT}
&1000    &.032    &.032    &.033    &.033    &.035    &.034    &.035    &.033\tabularnewline
&&2000    &.069    &.085    &.077    &.092    &.074    &.084    &.094    &.078\tabularnewline
&&5000    &.239    &.358    &.278    &.390    &.394    &.267    &.390    &.383\tabularnewline
\cline{2-11}

&\multirow{3}{*}{GH}

&1000    &.010    &.010    &.023    &.022    &.022    &.023    &.023    &.022\tabularnewline
&&2000    &.019    &.024    &.054    &.058    &.056    &.058    &.062    &.057\tabularnewline
&&5000    &.081    &.139    &.209    &.307    &.305    &.207    &.298    &.300\tabularnewline
\hline

\end{tabular}
\end{table}
}

\end{document}